\providecommand{\tabularnewline}{\\}
\theoremstyle{plain}
\newtheorem{thm}{\protect\theoremname}[section]
  \theoremstyle{definition}
  \newtheorem{defn}[thm]{\protect\definitionname}
  \theoremstyle{plain}
  \newtheorem*{question*}{\protect\questionname}
  \theoremstyle{definition}
  \newtheorem{example}[thm]{\protect\examplename}
  \theoremstyle{plain}
  \newtheorem{cor}[thm]{\protect\corollaryname}
  \theoremstyle{plain}
  \newtheorem{assumption}[thm]{\protect\assumptionname}
  \theoremstyle{remark}
  \newtheorem*{rem*}{\protect\remarkname}
\date{}
  \providecommand{\assumptionname}{Assumption}
  \providecommand{\corollaryname}{Corollary}
  \providecommand{\definitionname}{Definition}
  \providecommand{\examplename}{Example}
  \providecommand{\questionname}{Question}
  \providecommand{\remarkname}{Remark}
\providecommand{\theoremname}{Theorem}
\begin{document}

\title{Volumetric variational principles for a class of partial differential
equations defined on surfaces and curves  {\small{}}\\
 }

\author{Jay Chu\thanks{National Tsinghua University, Taiwan}~~~and Richard
Tsai\thanks{KTH Royal Institute of Technology, 11044 Stockholm, Sweden and The
University of Texas at Austin, TX 78712, USA}}
\maketitle
\begin{abstract}
In this paper, we propose simple numerical algorithms for partial
differential equations (PDEs) defined on closed, smooth surfaces (or
curves). In particular, we consider PDEs that originate from variational
principles defined on the surfaces; these include Laplace-Beltrami
equations and surface wave equations. The approach is to systematically
formulate  {{} extensions of the variational integrals,
derive the Euler-Lagrange equations of the extended problem, including
the boundary conditions} that can be easily discretized on uniform
Cartesian grids or adaptive meshes. In our approach, the surfaces
are defined implicitly by the distance functions or by the closest
point mapping. As such extensions are not unique, we investigate how
a class of simple extensions can influence the resulting PDEs. In
particular, we reduce the surface PDEs to model problems defined on
a periodic strip and the corresponding boundary conditions, and use
classical Fourier and Laplace transform methods to study the well-posedness
of the resulting problems. For elliptic and parabolic problems, our
boundary closure mostly yields stable algorithms to solve nonlinear
surface PDEs. For hyperbolic problems, the proposed boundary closure
is unstable in general, but the instability can be easily controlled
by either adding a higher order regularization term or by periodically
but infrequently ''reinitializing'' the computed solutions. Some
numerical examples for each representative surface PDEs are presented.
\end{abstract}

\section{Introduction}

This paper targets applications that use implicit or non-parametric
representations of closed surfaces or curves and require numerical
solution of certain partial differential equations defined on these
surfaces. In immiscible multiphase fluids, surfactants can change
important physical properties of the fluid mixture by lowering the
tension of the fluid interface. The surfactant concentration satisfies
a convection-diffusion equation in which the diffusion is described
by a surface Laplacian of the concentration. This is a typical application
in which the level set method enjoys an advantage in tracking the
dynamically changing fluid interface. One of the challenges in developing
a level set method for this application is in discretizing the surface
Laplacian (Laplace-Beltrami) term, which typically requires some extension
of the surfactant concentration and of the surface Laplacian operator
into the ambient space somehow \cite{xu2012level,xu2006level,xu2003eulerian}.
Eigenfunctions and eigenvalues of Laplace-Beltrami operator are of
great interest and use in many scientific disciplines. In computer
vision, eigenfunctions of the Laplace-Beltrami operator are used to
compare and classify surfaces or solid objects \cite{reuter2006laplace,rustamov2007laplace}.
In differential geometry, solutions of the Laplace-Beltrami problem
(Poisson's equation on surfaces) can be used to find the Hodge decompositions
of vector fields defined on the surfaces. The Hodge decomposition
can be used to formulate boundary integral methods for problems in
computational electromagnetic \cite{imbert2017pseudo,o2017second}.\textbf{ }

We aim at developing a general mathematical framework for designing
numerical schemes for solving a wide class of partial differential
equations (PDEs) defined on closed manifolds that are not defined
parametrically. The numerical schemes will inherit the flexibility
of the level set method in dealing with such type of manifolds. 

Let $\Omega\subset\mathbb{R}^{d}$ be a bounded open set with $C^{2}$
boundary $\Gamma=\partial\Omega$. For any small $\epsilon>0,$ define
the narrowband of $\Gamma$ as 
\begin{equation}
T_{\epsilon}=T_{\epsilon}(\Gamma):=\{z\in\mathbb{R}^{d}:\min_{x\in\Gamma}|z-x|<\epsilon\}.\label{eq:T_eps}
\end{equation}
In this paper, we consider integro-differential operators of the form
\begin{equation}
I_{\Gamma}[u]:=\int_{\Gamma}L(x,u(x),\nabla_{\Gamma}u(x))dS(x),\label{eq:elliptic_I}
\end{equation}
or 
\begin{equation}
I_{\Gamma}[u]:=\int_{t_{1}}^{t_{2}}\int_{\Gamma}L(x,u(x,t),u_{t}(x,t),\nabla_{\Gamma}u(x,t))dS(x)dt,\label{eq:hyperbolic_I}
\end{equation}
and their extensions 
\begin{equation}
\tilde{I}_{T_{\epsilon}}[v]:=\int_{T_{\epsilon}}\bar{L}(z,v(z),\nabla v(z))dz,\label{eq:elliptic-extension}
\end{equation}
 or 
\begin{equation}
\tilde{I}_{T_{\epsilon}}[v]:=\int_{t_{1}}^{t_{2}}\int_{T_{\epsilon}}\bar{L}(z,v(z,t),v_{t}(z,t),\nabla v(z,t))dzdt.\label{eq:hyperbolic-extension}
\end{equation}

\noindent We shall focus on partial differential equations arising
from \emph{calculus of variations} and develop an approach for finding
critical points of $I_{\Gamma}[u]$, i.e. 
\[
\delta I_{\Gamma}[u]=0,
\]
by solving 
\[
\delta\tilde{I}_{T_{\epsilon}}[v]=0.
\]
Here $\delta I_{\Gamma}$ and $\delta\tilde{I}_{T_{\epsilon}}$ denote
the variational derivatives of $I_{\Gamma}$ and $\tilde{I}_{T_{\epsilon}}$
in suitable function spaces, we shall further assume that $L$ and
$\bar{L}$ are smooth functions. We first give some definitions that
will facilitate the discussion. 
\begin{defn}
The closest point mapping $P_{\Gamma}:T_{\epsilon}\mapsto\Gamma$
is defined by 
\begin{equation}
P_{\Gamma}z:=\mathrm{argmin}_{x\in\Gamma}|z-x|.\label{eq:closest-point-mapping}
\end{equation}
If more than one point of $\Gamma$ is equidistant to $x$, we shall
randomly assign one of them as the definition of $P_{\Gamma}(z)$.
Since $\Gamma$ is assumed to be $C^{2},$ $P_{\Gamma}$ is well-defined
if $z$ is no farther than $\epsilon$ distance from $\Gamma$, for
any $\epsilon\in[0,\kappa_{\infty}^{-1})$ where $\kappa_{\infty}$
is an upper bound of the curvatures of $\Gamma.$ 
\end{defn}
\noindent If the distance function 
\[
d_{\Gamma}(z):=|P_{\Gamma}z-z|
\]
 is differentiable at $z$, then we have $P_{\Gamma}(z)=z-d_{\Gamma}(z)\nabla d_{\Gamma}(z).$ 
\begin{defn}
Let $u$ be a function defined on $\Gamma$, its \emph{constant-along-normal
extension or the closest point extension} in $T_{\epsilon}$ is defined
by $\bar{u}(z):=u(P_{\Gamma}z),\,\,\,\forall z\in T_{\epsilon}.$ 
\end{defn}
\noindent Next we define equivalence among functions and functionals.
\begin{defn}
Let $v$ be a function defined on $T_{\epsilon}$ and $u$ be a function
defined on $\Gamma$. We say that $v$ is equivalent to $u$ if $v(z)=\overline{u}(z)\,\,\,\forall z\in T_{\epsilon}.$
In this case, we denote $v\equiv u$. Correspondingly, let $\tilde{I}_{T_{\epsilon}}$
and $I_{\Gamma}$ be two integral operators, we define $\tilde{I}_{T_{\epsilon}}\equiv I_{\Gamma}$
if
\begin{equation}
\tilde{I}_{T_{\epsilon}}[v]=I_{\Gamma}[u]\,\,\,\,\text{whenever\,\,\,\,}v\equiv u.\label{def: equivalence}
\end{equation}
\end{defn}
\noindent Naturally, we are interested in answering the questions: 
\begin{question*}
Let $v(z)$ and $u(x)$ be respectively the solutions of the variational
problems involving $\tilde{I}_{T_{\epsilon}}$ and $I_{\Gamma}$.
If $\tilde{I}_{T_{\epsilon}}\equiv I_{\Gamma}$, \textbf{\emph{
\[
\text{\textbf{Is }}v\equiv u?
\]
}}
\end{question*}
Moreover,
\begin{question*}
How stable is the equivalence against perturbation introduced by the
numerical discretization? More precisely, how close is $\frac{\partial v}{\partial n}$
to zero?
\end{question*}

\subsection*{Related work}

\paragraph*{Level set methods or the closest point methods}

\noindent These methods extend the surface PDEs to ones in a narrowband
$T_{\epsilon}$ around the surface $\Gamma$. The proposed work is
motivated by various advantages and disadvantages of these methods.
Of course the aim is to keep the advantages and get rid of the disadvantages.
In the level set methods, e.g. \cite{bertalmio2001variational,dziuk2013finite},
partial derivatives on $\Gamma$, as well as on all nearby parallel
surfaces $\Gamma_{\eta}$   {(the collection of the points
that are $\eta$ distance from $\Gamma$}), are written in the form
of the orthogonal projections of the gradient operator in the Euclidean
space, and thus an equation in $T_{\epsilon}$ is defined formally
replacing the surface gradient $\nabla_{\Gamma}u$ by $(I-\mathbf{n}\otimes\mathbf{n})\nabla v^{\epsilon}$.
As a reminder, $v^{\epsilon}$ is a function defined in $T_{\epsilon}$.
The closest point methods   {for solving parabolic and
elliptic PDEs defined on surfaces}, see e.g. \cite{ruuth_merriman08,macdonald_ruuth09},
assume the constant-along-normal extension of quantities defined on
the surface, and replace surface gradient operator $\nabla_{\Gamma}$
by the gradient operator $\nabla$ defined in the embedding Euclidean
space. Thus the methods typically involve iterations of (a) enforcing
the constraint that solutions are constant-along-normal, $\overline{v^{\epsilon}}(z):=v^{\epsilon}(x)$
for $P_{\Gamma}z=x$, done by extensive interpolation, and (b) solution
of the PDEs in $T_{\epsilon}.$ In either methods, the solution to
a given surface PDE is extracted from the restriction of $v^{\epsilon}$
on $\Gamma$, via interpolation. Compared to the level set methods,
the closest point methods can be applied more easily to manifolds
of different co-dimensions and involve simpler differential operators.
However, these two methodologies seem to be designed exclusively for
solving initial value problems with explicit time stepping.

\paragraph*{Finite element methods on arbitrary surfaces}

A straightforward way to formulate finite element methods to solve
the Laplace-Beltrami problem, $\Delta_{\Gamma}u=f$, on smooth surfaces
$\Gamma=\partial\Omega$ involve: (i) approximate $\Omega$ by a polyhedron
$\Omega_{h}$ such that $\Gamma$ is approximated by $\Gamma_{h,}=\partial\Omega_{h}$
which consists of collection of triangles whose vertices are on $\text{\ensuremath{\Gamma}}$;
(ii) use $\Gamma_{h}$ to parametrize $\Gamma$ locally; (iii) extend
the source term $f$ constant along the normal direction of $\Gamma$
to obtain an equivalent source function $f_{h}$ define on $\Gamma_{h}$;
(iv) solve the weak form on $\Gamma_{h}$ by using standard finite
element methods \cite{MR976234,MR2485433,MR2285862}. Linear finite
elements with piecewise linear triangular approximation $\Gamma_{h}$
is discussed in \cite{MR976234}. For higher order finite element
methods, $\Gamma$ is approximated by $\Gamma_{h}^{k}$, a collection
of images of piecewise interpolating polynomial defined on each triangle
of $\Gamma_{h}$, and use higher order elements defined on $\Gamma_{h}^{k}$
\cite{MR2485433,MR2285862}. These methods require nontrivial lifting
and projection between $\Gamma_{h}^{k}$ and $\Gamma$ when evaluating
integrals in weak form defined on $\Gamma_{h}^{k}.$ Other approaches
approximate surface $\Gamma$ without using nodes on $\Gamma$ directly.
For example, the trace finite element method \cite{MR2551197} or
sharp interface method and narrowband method \cite{MR3249369}, are
designed when the surface $\Gamma$ is given by the zero level set
of a function $\phi$ (not necessary a signed distance function).
The level set function $\phi$ is first approximated by the nodal
interpolant $\phi_{h}$. The approximate surface $\Gamma_{h}$ or
neighborhood of $\Gamma$ are computed from the zero level set or
$\epsilon$-neighborhood of $\phi_{h}$. Other qualities such as normal
of $\Gamma$ and close map on $\Gamma$ are obtained from $\phi_{h}$
approximately. These methods share the same features: (i) alter the
surface $\Gamma$ to an approximate surface $\Gamma_{h}$; (ii) change
the variational form on $\Gamma$ to an equivalent variational form
on $\Gamma_{h}$. However, the implementation of the equivalent weak
form can be very complicated. Recently Olshanskii and Safin proposed
a narrowband unfitted finite element method to solve elliptic PDEs
defined on surfaces \cite{olshanskii2016narrow}. In their work, the
surface PDE is replaced by an equivalent PDE on the narrowband $T_{\epsilon}(\Gamma)$
and an ``equivalent'' PDE is derived from the constant-along-normal
extension of the solution without considering an approximate surface
$\Gamma_{h}.$ The Neumann boundary condition for unfitted element
is treated by a piecewise planar approximation to $\partial T_{\epsilon}$
as proposed in \cite{BARRETT:1984:imanum/4.3.309}. Some local subdivisions
may be needed for the elements that are closed to the boundary. This
work is closest to our formulation. However, as we shall describe
below, our approach does not require adaptive meshing and allows a
very general set of meshing of the ambience space. Both finite difference
and finite element methods can be easily used for discretization.

\paragraph*{Computation of the distance function, the closest point mapping and
constant-along-normal extensions}

The computation of the signed distance functions as well as the closest
point mappings from given level set functions are by now considered
standard component in the level set methods \cite{osher_sethian88},
and can be carried out to high order accuracy in many different ways,
e.g. \cite{sethian1999fast,ahmed2011third,zhang2006high,Redistance_ChengTsai},
where by extending the interface coordinates as constants along interface
normals, $P_{\Gamma}$ can be computed easily to fourth order in the
grid spacing. Once an accurate distance function is computed on the
grid, its gradient can be computed by standard finite differencing
or by more accurate but wider WENO stencils \cite{jiang2000weighted}.
If the surfaces are given initially by explicitly parameterized patches,
fast algorithms such as the one proposed in \cite{Tsai:JCP2002} in
combination with KD-Tree can be used. 

\paragraph*{The proposed framework}

The proposed framework contains a theoretically sound formulation
that admits unique solutions which are constant-along-normal, i.e.
$v^{\epsilon}(z)=v(P_{\Gamma}z)$ for all $z\in T_{\epsilon}$, \emph{without
enforcing extra constraints}. The formulation will include relatively
simple differential operators and a theory on suitable and easy to
implement boundary conditions. As a result, the proposed method can
easily be applied to solve problems involving curves and surfaces
in three dimensions, integral constraints and eigenvalues \textemdash{}
all of which pose some level of difficulty for the other methods.
Some of the operators in the equations resulting from the proposed
framework will resemble those used in \cite{greer2006improvement,olshanskii2016narrow,vogl2016curvature}.
However, the way the equations are derived, the boundary conditions,
and the numerical methods differ significantly. We also propose a
method to extend the wave equation on surface in volumetric form and
treat the Neumann boundary condition in an easy way. The instability
due to the approximation of boundary condition can be corrected by
modifying the extended PDE.

In summary, the proposed method provides a unifying approach to tackle
a wide range of problems defined on surfaces or curves in three dimensions.
The reasons for considering the proposed framework are simple: the
extended problems are easier to solve computationally, in particular
for applications where the surfaces and curves are given non-parametrically
and are changing dynamically, and/or some global information about
the operators such as the eigenvalues and eigenfunctions needs to
be computed. The variational integrals used in the formulation, discussed
in Section 2, provide a natural and systematic way of investigating
important issues on boundary conditions, regularization and compatibility.
The proposed method also opens up possibility of direct numerical
minimization methods for problems involving more nonlinear and degenerate
surface equations. However, this direction will be investigated in
a future paper. 

\subsection*{Notations for finite difference discretization}

In this paper, we shall use the following notations and setup for
discussion involving finite difference discretization of the proposed
equations and boundary conditions. Approximation of an unknown function
$u$ will be constructed on the uniform Cartesian grid \textbf{$h\mathbb{Z}^{2}$}
for some $h>0$, and the typical notation $u_{i,j}$ for approximation
of $u(ih,jh)$. For evolution problems, $u_{i,j}^{n}$ will denote
the approximation for $u(ih,jh,n\Delta t)$ where $\Delta t$ is the
step size and $n$ is a positive integer. We shall also use the standard
notations for finite difference operators acting on the grid function
$u_{i,j}$:
\[
D_{\pm}^{x}u_{i,j}=\pm\frac{u_{i\pm1,j}-u_{i,j}}{h},D_{\pm}^{y}u_{i,j}=\pm\frac{u_{i,j\pm1}-u_{i,j}}{h}.
\]
 These finite difference operators, $D_{+}^{x},D_{-}^{x},D_{+}^{y}$
and $D_{-}^{y}$, are used to construct finite difference approximations
of higher order partial derivatives of $u$. Polynomials of these
four operators correspond to applying the operators recursively to
the grid function as defined by the polynomials: e.g. 
\[
D_{+}^{x}D_{-}^{x}u_{i,j}=D_{+}^{x}\frac{u_{i,j}-u_{i-1,j}}{h}=\frac{u_{i+1,j}-2u_{i,j}+u_{i-1,j}}{h^{2}}
\]
 is a standard approximation of $u_{xx}(ih,jh)$, and 
\begin{align*}
(D_{+}^{x}D_{-}^{x})^{2}u_{i,j} & =D_{+}^{x}D_{-}^{x}(\frac{u_{i+1,j}-2u_{i,j}+u_{i-1,j}}{h^{2}})\\
 & =\frac{1}{h^{2}}D_{+}^{x}D_{-}^{x}u_{i+1,j}-\frac{2}{h^{2}}D_{+}^{x}D_{-}^{x}u_{i,j}+\frac{1}{h^{2}}D_{+}^{x}D_{-}^{x}u_{i-1,j}
\end{align*}
 is an approximation of $u_{xxxx}(ih,jh).$ 

\subsection*{Layout of the paper}

This paper is organized as follows. In Section 2, we describe the
proposed extensions and boundary closure for discretization of the
extended problems on Cartesian grids or more general meshes. In Section
3, we consider minimization of strictly convex functionals which lead
to elliptic equations. In Section 4, we study least-action principles
that lead to hyperbolic problems. In both Sections 3 and 4, we study
the stability of the proposed extensions under the presence of perturbation
to the boundary (and initial) conditions. A brief summary of our findings
is presented in Section 5. In the Appendix, we provide some details
on the derivation of the operators used in the paper. 

\section{  {The proposed framework}\label{sec:The-Research-Program}}

  {In our framework, the closest point projection $P_{\Gamma}$
is used to define extension of integrants as well as to provide a
change of variables for integration in the narrowband $T_{\epsilon}$.
In extending a variational problem defined on $\Gamma$ to one defined
in $T_{\epsilon}$,} one needs to deal with the additional degrees
of freedom in the co-dimensions of $\Gamma$: depending on the nature
of $I_{\Gamma}$, one may need to do nothing, impose restrictions,
adding a regularizing term specific to the additional dimension(s)
or convexifying the resulting functional.   {Additionally,
one needs to develop efficient stable numerical methods for enforcing
suitable boundary conditions.}  

\subsection{  {Extensions of functionals defined on closed surfaces
of co-dimension one} \label{subsec:Closed-surfaces-embedded}}

We start with the general procedure proposed in \cite{KTT} for integration
on closed hypersurfaces. The procedure starts by rewriting the integral
on $\Gamma$ using nearby parallel curves or surfaces. It is useful
to have the following notation: 
\begin{defn}
Let $d_{\Gamma}$ be a signed distance function to $\Gamma=\partial\Omega$;
i.e. $d_{\Gamma}(w)<0$ for any $w\in\Omega$ and $d_{\Gamma}(w)>0$
for any $w\in\bar{\Omega}^{c}.$ The ``parallel surface'' which
is of $\eta$ distance from $\Gamma$ is denoted by 
\[
\Gamma_{\eta}:=\{y\in\mathbb{R}^{d}:d_{\Gamma}(y)=\eta\}.
\]
The closest point projection $P_{\Gamma}$ is a diffeomorphism between
$\Gamma$ and $\Gamma_{\eta}$ for any $\eta\in[0,\kappa_{\infty}^{-1})$
where $\kappa_{\infty}$ is an upper bound of the curvatures of $\Gamma$. 
\end{defn}
\noindent The integrals on the parallel curves or surfaces, indexed
by the distance to $\Gamma$, are then averaged using a kernel $K_{\epsilon}\in C([-\epsilon,\epsilon])$
that satisfies $\int_{-\epsilon}^{\epsilon}K_{\epsilon}(\eta)\,d\eta=1$.
We summarize the two steps as follows:

\begin{equation}
\int_{\Gamma}L(x)dS(x)=\int_{\Gamma_{\eta}}L(P_{\Gamma}y)J(y)dS(y)=\int_{-\epsilon}^{\epsilon}K_{\epsilon}(\eta)\int_{\Gamma_{\eta}}L(P_{\Gamma}y)J(y)dS(y)\,d\eta.\label{eq:integration-by-layers}
\end{equation}
Here, $J$ is the Jacobian that comes from the change of variables
$z\mapsto P_{\Gamma}z\in\Gamma$. For curves embedded in $\mathbb{R}^{2}$,
$J(z)$ is given by

\[
J(z)=1-d_{\Gamma}(z)\kappa(z),
\]
where $\kappa$ is the curvature of the parallel curve $\Gamma_{\eta}$
that passes through $z$; i.e. $\eta=d_{\Gamma}(z)$. See Figure~\ref{fig: J-and-BC}
for an illustration. Similarly, for surfaces embedded in $\mathbb{R}^{3}$,
$J(z)$ is give by

\begin{equation}
J(z)=\prod_{j=1}^{2}[1-d_{\Gamma}(z)\kappa_{j}(z)]=\sigma_{1}(z)\sigma_{2}(z),\label{eq:The-Jacobian}
\end{equation}
where $\kappa_{1},\kappa_{2}$ are the two principle curvatures of
the parallel surface $\Gamma_{\eta}$ that passes through $z$. In
\cite{kublik2016integration}, we show that $\sigma_{j}$ are the
two largest singular values of $DP_{\Gamma}$, the derivative of the
closest point projection operator. Thus $J(z)$ can be easily computed
by finite differencing $P_{\Gamma}z$ and singular value decomposition. 

Finally, by the coarea formula, see e.g. \cite{Evans-Gariepy}, the
double integral on the right hand side of \eqref{eq:integration-by-layers}
equals to a Lebesque integral in $T_{\epsilon}$, and thus
\[
\int_{\Gamma}L(x)dS(x)=\int_{T_{\epsilon}}L(P_{\Gamma}z)\,K_{\epsilon}(d_{\Gamma}(z))J(z)\,dz.
\]
If $L$ depends on a function $u:\Gamma\mapsto\mathbb{R}$, there
is no difficulty in applying the same extension procedure: 
\[
L(x,u(x))\longrightarrow L(P_{\Gamma}z,u(P_{\Gamma}z)).
\]

Now if $L$ depends on $\nabla_{\Gamma}u$, the gradient of a function
$u$ on $\Gamma$, one could apply the same procedure as above \emph{if
there is a convenient formula for the constant-along-normal extension
of $\nabla_{\Gamma}u(x).$ }  {Naturally, we would like
to relate the normal extension of the surface gradient $\overline{\nabla_{\Gamma}u}(z)$
to the gradient of the normal extension $\nabla_{z}\overline{u}(z)$
for $z\in T_{\epsilon}.$} 

We consider a simple motivating example involving concentric circles.
\begin{example}
Let $\Gamma_{\eta}$ be the circle, centered at the origin, with radius
$r_{0}+\eta$ for some $r_{0}>0$. Let $\bar{u}$ be a $C^{1}$ function
on $\mathbb{R}^{2}\setminus\{(0,0)\}$. Consider polar coordinates
$(r,\theta)$. On $\Gamma_{\eta}$, the derivative of $\bar{u}$ with
respect to the arc length is related to that with respect to the angle
$\theta$ by 
\[
\frac{\partial\bar{u}}{\partial s}=\frac{\partial\bar{u}}{\partial\theta}\frac{1}{r_{0}+\eta}.
\]
Thus the curve derivative of $\bar{u}$ on $\Gamma$ is equivalent
to the that on $\Gamma_{\eta}$, weighted properly by $J^{-1}:$ 
\[
\frac{\partial\bar{u}}{\partial s}|_{\Gamma}=\frac{1}{r_{0}}\frac{\partial\bar{u}}{\partial\theta}=\frac{r_{0}+\eta}{r_{0}}\frac{\partial\bar{u}}{\partial s}|_{\Gamma_{\eta}}=J^{-1}\frac{\partial\bar{u}}{\partial s}|_{\Gamma_{\eta}}.
\]
We can furthermore rewrite this equality into a general formula involving
$\nabla\bar{u}$, the gradient of $\bar{u}$ in $\mathbb{R}^{2}$,
and tangential and normal vectors, $\mathbf{t}$ and \textbf{n}, of
the concentric circles: 
\[
\frac{\partial\bar{u}}{\partial s}|_{\Gamma}=J^{-1}(\nabla\bar{u}\cdot\mathbf{t})\mathbf{t}+\mu(\nabla\bar{u}\cdot\mathbf{n})\mathbf{n}=\left(J^{-1}\mathbf{t}\otimes\mathbf{t}+\mu\mathbf{n}\otimes\mathbf{n}\right)\nabla\bar{u},
\]
for any $\mu.$ $\qed$
\end{example}
It can be shown easily that the surface gradient $\nabla_{\Gamma}u$,
when embedded as a vector in $\mathbb{R}^{3}$, satisfies
\begin{equation}
\overline{\nabla_{\Gamma}u}(z)=A(z;\mu)\nabla_{z}\overline{u}(z),\label{eq:gradient-projection-tensor}
\end{equation}
where $A(z;\mu)=A_{0}(z)+\mu A_{1}(z),$
\begin{align*}
A_{0}(z):= & \sigma_{1}^{-1}\mathbf{t}_{1}\otimes\mathbf{t}_{1}+\sigma_{2}^{-1}\mathbf{t}_{2}\otimes\mathbf{t}_{2},\\
A_{1}(z):= & \mathbf{n}\otimes\mathbf{n},
\end{align*}
and $\mathbf{t}_{1}$, $\mathbf{t}_{2}$ are the two orthonormal tangent
vectors corresponding to the directions that yield the principle curvatures
of $\Gamma$, $\mathbf{n}$ is the unit normal vector, $\mu$ can
be any real number, and $\sigma_{1},$$\sigma_{2}$ are defined in
\eqref{eq:The-Jacobian}. Thus $A_{0}$ corresponds to a weighted
orthogonal projection onto the tangent plane and $A_{1}$ the projection
along the normal of the parallel surface passing through $z.$ See
Appendix for detail. The projection $\mu A_{1}\nabla_{z}\overline{u}(z)$
is always zero since $\overline{u}(z)$ is constant long the normal
of $\Gamma$. The constituents of $A(z;\mu)$ can be easily computed
from either the eigenvalues and eigenvectors of of $DP_{\Gamma}$
or $D^{2}d_{\Gamma}$; see \cite{kublik2016integration}. 

  {In this paper, we consider extensions of $I_{\Gamma}[u]$
in the form:
\begin{equation}
\tilde{I}_{T_{\epsilon}}[v]:=\int_{T_{\epsilon}}\underbrace{L(P_{\Gamma}z,v(z),A_{0}\nabla v(z)+\mu A_{1}\nabla v(z))\,K_{\epsilon}(d_{\Gamma}(z))J(z)}_{\bar{L}(z,v,\nabla v)}\,dz,\label{eq:real-def-of-J_Teps}
\end{equation}
where $\mu A_{1}\nabla v$ is the additional term corresponding to
the extra degree of freedom in the co-dimension. If $I_{\Gamma}[u]$
is convex, the term $\mu A_{1}\nabla v$ should }  {\emph{convexify}}  {{}
the extended problem to ensure existence and uniqueness of the minimizer,
and together with the natural boundary condition at $\partial T_{\epsilon}$
it should ensure that the minimizer satisfies Assumption~\ref{assu:dv/dn=00003D0}.
We refer to this additional term as the }  {\emph{regularization
term }}  {in $\tilde{I}_{T_{\epsilon}}$. The degree
of freedom from the co-dimension needs to be treated differently for
the saddle-point problem discussed in Section \ref{sec:Least-action-principles}. }

Also in this paper, we shall consider $K_{\epsilon}=\frac{1}{2\epsilon}\chi_{[-\epsilon,\epsilon]}$
for simplicity. The operator $\tilde{I}_{T_{\epsilon}}$ is equivalent
to the original operator $I_{\Gamma}$. Instead of finding the minimizer
of the energy $I_{\Gamma}[\cdot]$ in a Banach space of functions
defined on $\Gamma$, we look for the minimizer of $\tilde{I}_{T_{\epsilon}}[\cdot]$
in a suitable Banach space of functions defined in $T_{\epsilon}$.
The derivatives involved in $\tilde{I}_{T_{\epsilon}}[v]$ are in
the Eulerian coordinates and therefore are easier to compute numerically.
The geometry information of $\Gamma$ is embedded in the coefficient
$A(z;\mu)$ and the boundary of $T_{\epsilon}$.\textcolor{blue}{{}
}We have been keeping the discussion on the relevant function spaces
at minimum.   {In general, the choice of the function
spaces for the extended problems depends on the specific problem class.}

\begin{figure}
\begin{centering}
\includegraphics{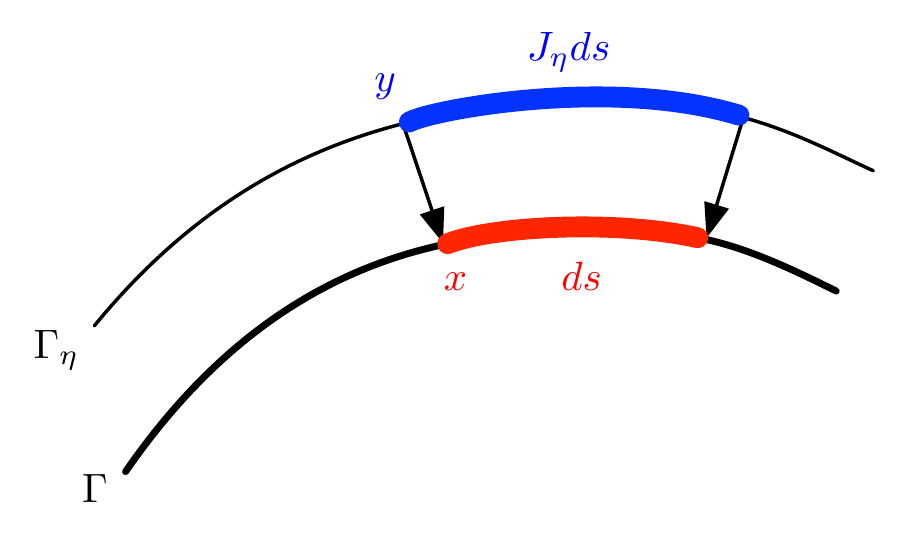}
\par\end{centering}
\caption{Parameterization of $\Gamma$ by a nearby parallel surface that is
$\eta$ distance away. \label{fig: J-and-BC}}
\end{figure}

\subsection{  {Extensions of functionals defined on closed surfaces
of co-dimension two}}

In this paper, we only consider the extension for functionals defined
on closed curves embedded in $\mathbb{R}^{3}.$ The procedure for
integrating hypersurfaces is generalized in \cite{kublik2016integration}
to this case, with the the Jacobian 
\begin{equation}
J(z)=\frac{\sigma_{1}(z)}{2\pi d_{\Gamma}(z)}.\label{eq:Jacobian-curve-in-R3}
\end{equation}
Notice that the division by $d_{\Gamma}$ accounts for the fact that
we are approximating a line integral (integration along $\Gamma$)
by a family of surface integrals (integration on $\Gamma_{\eta},$
$\eta\in(0,\epsilon]$). We may remove this singularity by taking
$K_{\epsilon}(\eta)=2\pi\eta\tilde{K}_{\epsilon}(\eta),$ $\tilde{K}_{\epsilon}\in C^{p}([0,\epsilon])$
and obtain 
\[
\int_{\Gamma}u(x)dS(x)=\frac{1}{Z_{0}}\int_{0}^{\epsilon}K_{\epsilon}(\eta)\int_{\Gamma_{\eta}}u(P_{\Gamma}z)\frac{\sigma_{1}(z)}{2\pi\eta}dS(z)\,d\eta=\frac{1}{Z_{0}}\int_{T_{\epsilon}}u(P_{\Gamma}z)\tilde{K}_{\epsilon}(d_{\Gamma}(z))\sigma_{1}(z)\,dz,
\]
 where $Z_{0}=\int_{0}^{\epsilon}K_{\epsilon}(\eta)d\eta$ is the
normalizing factor. 

Finally, the projection tensor in \eqref{eq:gradient-projection-tensor}
has two degrees of freedom, and takes the from:

\[
A(z;\mu_{1},\mu_{2}):=\left(\sigma_{1}^{-1}\mathbf{t}_{1}\otimes\mathbf{t}_{1}+\mu_{1}\mathbf{n}_{1}\otimes\mathbf{n}_{1}+\mu_{2}\mathbf{n}_{2}\otimes\mathbf{n}_{2}\right),
\]
 where $\mu_{1},\mu_{2}$ are two parameters corresponding to regularization
in the co-dimensions to the curve, and $\sigma_{1}$ is the largest
singular value of $DP_{\Gamma}(z).$ 

\subsection{Equivalence between the minimizers of surface energy $I_{\Gamma}$
and its extension $\tilde{I}_{T_{\epsilon}}$}

To justify our proposed method, we must demonstrate that $v\equiv u$
whenever $I_{\Gamma}\equiv\tilde{I}_{T_{\epsilon}}$. For simplicity,
we only discuss the strictly convex energy

\[
I_{\Gamma}[u]=\int_{\Gamma}L(x,u(x),\nabla_{\Gamma}u(x))dS(x).
\]
Saddle point type energy is discussed in Section \ref{sec:Least-action-principles}. 
\begin{thm}
  {\label{thm:Equivalent}Suppose $\Gamma$ is a $C^{2}$-surface
in $\mathbb{R}^{3}$ and $L(x,q,\mathbf{p}):\Gamma\times\mathbb{R}\times\mathbb{R}^{3}\rightarrow\mathbb{R}$
is twice continuously differentiable and satisfies $L_{\mathbf{p}}(x,q,\mathbf{t})\cdot\mathbf{n}(x)=0$
for and vector $\mathbf{t}$tangent to $\Gamma$ at $x$. Let $u$
be a $C^{2}$-solution of the Euler-Lagrange equation for surface
energy $I_{\Gamma}$, $\delta I_{\Gamma}[u]=0$. Then its normal extension
$\overline{u}:=u(P_{\Gamma}\cdot)$ is a solution of the Euler-Lagrange
equation for extended energy $\tilde{I}_{T_{\epsilon}}$; i.e. $\delta\tilde{I}_{T_{\epsilon}}[\bar{u}]=0$,
and $\bar{u}$ satisfies the Neumann boundary condition on $\partial T_{\epsilon}$.}
\end{thm}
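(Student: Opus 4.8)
The plan is to show directly that the first variation $\delta\tilde{I}_{T_{\epsilon}}[\bar{u}]$ annihilates \emph{every} test function $\phi$ on $T_{\epsilon}$, with no constraint imposed on $\partial T_{\epsilon}$; vanishing against unconstrained $\phi$ yields simultaneously the extended Euler--Lagrange equation in $T_{\epsilon}$ and the natural (Neumann) boundary condition on $\partial T_{\epsilon}$. Writing $x=P_{\Gamma}z$ and $\eta=d_{\Gamma}(z)$, the first variation is
\[
\delta\tilde{I}_{T_{\epsilon}}[\bar{u}](\phi)=\int_{T_{\epsilon}}\Big[L_{q}\,\phi+L_{\mathbf{p}}\cdot\big(A_{0}\nabla\phi+\mu A_{1}\nabla\phi\big)\Big]K_{\epsilon}(d_{\Gamma}(z))\,J(z)\,dz,
\]
where $L_{q},L_{\mathbf{p}}$ are evaluated at $(P_{\Gamma}z,\bar{u}(z),A(z;\mu)\nabla\bar{u}(z))$. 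Because $\bar{u}$ is constant along normals, $A_{1}\nabla\bar{u}=(\partial_{n}\bar{u})\mathbf{n}=0$, so the third argument collapses to $A_{0}\nabla\bar{u}=\overline{\nabla_{\Gamma}u}(z)$ by \eqref{eq:gradient-projection-tensor}; hence $L_{q},L_{\mathbf{p}}$ equal their surface values $L_{q}(x,u(x),\nabla_{\Gamma}u(x))$ and $L_{\mathbf{p}}(x,u(x),\nabla_{\Gamma}u(x))$.

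Next I would use the compatibility hypothesis to decouple the normal direction. Since $A(z;\mu)$ is symmetric, $L_{\mathbf{p}}\cdot A\nabla\phi=(AL_{\mathbf{p}})\cdot\nabla\phi$ with $AL_{\mathbf{p}}=A_{0}L_{\mathbf{p}}+\mu(L_{\mathbf{p}}\cdot\mathbf{n})\mathbf{n}$. The argument $\overline{\nabla_{\Gamma}u}$ is tangent to $\Gamma$ at $x$, so the assumption $L_{\mathbf{p}}\cdot\mathbf{n}=0$ kills the $\mu A_{1}$ term; thus $AL_{\mathbf{p}}=A_{0}L_{\mathbf{p}}$ is tangent to the layer $\Gamma_{\eta}$ through $z$, and $(AL_{\mathbf{p}})\cdot\nabla\phi=(A_{0}L_{\mathbf{p}})\cdot\nabla_{\Gamma_{\eta}}\phi$ involves only the tangential (layerwise) gradient of $\phi$, not $\partial_{n}\phi$. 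This is the crucial point: the normal degree of freedom of the test function drops out, so the volumetric variation decouples into a family of surface variations. Applying the coarea formula,
\[
\delta\tilde{I}_{T_{\epsilon}}[\bar{u}](\phi)=\int_{-\epsilon}^{\epsilon}K_{\epsilon}(\eta)\left(\int_{\Gamma_{\eta}}\big[L_{q}\,\phi+L_{\mathbf{p}}\cdot A_{0}\nabla_{\Gamma_{\eta}}\phi\big]\,J\,dS\right)d\eta.
\]

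The heart of the argument is then to identify each inner integral with a surface first variation of $I_{\Gamma}$. For fixed $\eta$, let $\psi_{\eta}:=\phi\circ(P_{\Gamma}|_{\Gamma_{\eta}})^{-1}$ be the function on $\Gamma$ obtained by transporting $\phi|_{\Gamma_{\eta}}$ through the closest point map. On $\Gamma_{\eta}$ the restriction $\phi$ agrees with the normal extension $\overline{\psi_{\eta}}$, so $\nabla_{\Gamma_{\eta}}\phi=\nabla_{\Gamma_{\eta}}\overline{\psi_{\eta}}$, and \eqref{eq:gradient-projection-tensor} gives $A_{0}\nabla_{\Gamma_{\eta}}\phi=(\nabla_{\Gamma}\psi_{\eta})\circ P_{\Gamma}$. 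The Jacobian $J$ is exactly the factor that turns $dS$ on $\Gamma_{\eta}$ into $dS$ on $\Gamma$ under $P_{\Gamma}$, by the integration-by-layers formula \eqref{eq:integration-by-layers}, so each inner integral becomes
\[
\int_{\Gamma}\big[L_{q}(x,u,\nabla_{\Gamma}u)\,\psi_{\eta}+L_{\mathbf{p}}(x,u,\nabla_{\Gamma}u)\cdot\nabla_{\Gamma}\psi_{\eta}\big]\,dS=\delta I_{\Gamma}[u](\psi_{\eta})=0,
\]
because $u$ solves the surface Euler--Lagrange equation and $\Gamma$ is closed (no boundary contribution). Hence the full integral vanishes for every $\phi$, proving $\delta\tilde{I}_{T_{\epsilon}}[\bar{u}]=0$ together with the natural boundary condition. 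To exhibit the latter explicitly I would integrate by parts, producing a boundary term $\int_{\partial T_{\epsilon}}(AL_{\mathbf{p}}\cdot\mathbf{N})\,K_{\epsilon}J\,\phi\,dS$; since $\partial T_{\epsilon}$ is a union of parallel surfaces so that $\mathbf{N}=\pm\mathbf{n}$ while $AL_{\mathbf{p}}=A_{0}L_{\mathbf{p}}$ is tangential, the integrand vanishes pointwise, and of course $\partial\bar{u}/\partial n=0$ holds trivially by construction.

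The step I expect to be the main obstacle is the change of variables identifying the inner layer integral with $\delta I_{\Gamma}[u](\psi_{\eta})$: one must verify that $A_{0}$ and $J$ are precisely the correct metric and area factors for the diffeomorphism $P_{\Gamma}:\Gamma_{\eta}\to\Gamma$, so that the tangential gradient on the layer transported to $\Gamma$ becomes $\nabla_{\Gamma}\psi_{\eta}$ with the surface measure transforming exactly. This rests on \eqref{eq:gradient-projection-tensor} and \eqref{eq:integration-by-layers} (detailed in the Appendix and \cite{kublik2016integration}); the remaining bookkeeping --- symmetry of $A$, the vanishing of the $\mu A_{1}$ term via compatibility, and $\partial_{n}\mathbf{n}=0$ should one instead argue through the strong form --- is routine.
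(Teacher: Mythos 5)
Your proof is correct, but it follows a genuinely different route from the paper's. The paper argues in the strong form: it writes $\delta\tilde{I}_{T_{\epsilon}}[\bar{u}]=-\nabla\cdot(JAL_{\mathbf{p}})+JL_{q}$ and invokes the surface-divergence identity of Theorem~\ref{thm:A1}, $\overline{\nabla_{\Gamma}\cdot F}=J^{-1}\nabla\cdot(JA\overline{F})$, to recognize this expression as $J$ times the surface Euler--Lagrange expression $-\overline{\nabla_{\Gamma}\cdot L_{\mathbf{p}}}+\overline{L_{q}}$, which vanishes; the Neumann condition is then dismissed as trivial since $\partial\bar{u}/\partial n=0$ holds throughout $T_{\epsilon}$. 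You instead stay in the weak form: you use the tangency hypothesis $L_{\mathbf{p}}\cdot\mathbf{n}=0$ to strip the normal component of the test function's gradient, slice $T_{\epsilon}$ into parallel surfaces by the coarea formula, transport each slice of the test function to $\Gamma$ through $P_{\Gamma}$, and identify each layer integral with $\delta I_{\Gamma}[u](\psi_{\eta})=0$ via \eqref{eq:gradient-projection-tensor} and \eqref{eq:integration-by-layers}. Both arguments ultimately rest on the same geometric facts, but the trade-offs differ: the paper's computation is shorter, yet it outsources the real work to the divergence identity of Theorem~\ref{thm:A1} and leaves implicit exactly where the hypothesis $L_{\mathbf{p}}(x,q,\mathbf{t})\cdot\mathbf{n}=0$ is used (it is what makes that identity applicable to $F=L_{\mathbf{p}}$, since the identity requires a tangent field); your version makes that role explicit, avoids the divergence formula entirely, and --- because you test against unconstrained variations --- simultaneously produces the \emph{natural} boundary condition $(AL_{\mathbf{p}})\cdot\mathbf{N}=0$ on $\partial T_{\epsilon}$ of the extended functional, which is a sharper statement of criticality than the pointwise observation $\partial\bar{u}/\partial n=0$ alone. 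Your layer-by-layer identification also explains conceptually \emph{why} $\bar{u}$ is critical: the volumetric first variation is a $K_{\epsilon}$-weighted average of surface first variations, each of which vanishes. The one step you flag as delicate --- that $A_{0}$ and $J$ are exactly the right metric and area factors under $P_{\Gamma}:\Gamma_{\eta}\to\Gamma$ --- is indeed the crux, and it checks out: since $\phi$ and $\overline{\psi_{\eta}}$ agree on $\Gamma_{\eta}$, their tangential gradients along $\Gamma_{\eta}$ coincide, $\nabla\overline{\psi_{\eta}}$ has no normal component, and \eqref{eq:gradient-projection-tensor} applied to $\psi_{\eta}$ gives $A_{0}\nabla_{\Gamma_{\eta}}\phi=(\nabla_{\Gamma}\psi_{\eta})\circ P_{\Gamma}$, while \eqref{eq:integration-by-layers} supplies the measure change.
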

\begin{proof}
Since $u$ is a solution of the Euler-Lagrange equation $\delta I_{\Gamma}[u]=0$,
$u$ satisfies
\begin{equation}
-\nabla_{\Gamma}\cdot L_{{\bf p}}(x,u,\nabla_{\Gamma}u)+L_{q}(x,u,\nabla_{\Gamma}u)=0\,\,\forall x\in\Gamma.
\end{equation}
By Theorem \ref{thm:A1} in the Appendix, we have
\begin{equation}
\begin{split}\begin{split}\delta\tilde{I}_{T_{\epsilon}}[\bar{u}]= & -\nabla\cdot(JAL_{{\bf p}}(P_{\Gamma}z,\bar{u},A\nabla\bar{u}))+JL_{q}(P_{\Gamma}z,\bar{u},A\nabla\bar{u})\\
= & J(-J^{-1}\nabla\cdot(JAL_{{\bf p}}(P_{\Gamma}z,\overline{u},\overline{\nabla_{\Gamma}u}))+L_{q}(P_{\Gamma}z,\overline{u},\overline{\nabla_{\Gamma}u}))\\
= & J(-J^{-1}\nabla(\cdot JA\overline{L_{{\bf p}}(x,u,\nabla_{\Gamma}u)})+\overline{L_{q}(x,u,\nabla_{\Gamma}u)})\\
= & J(-\overline{\nabla_{\Gamma}\cdot L_{{\bf p}}(x,u,\nabla_{\Gamma}u)}+\overline{L_{q}(x,u,\nabla_{\Gamma}u)})=0
\end{split}
\end{split}
\end{equation}
Therefore $\bar{u}$ satisfies the Euler-Lagrange equation for the
extended energy $\tilde{I}_{T_{\epsilon}}$. It is obvious that $\frac{\partial\bar{u}}{\partial n}=0$
on $\partial T_{\epsilon}$ since it is true inside $T_{\epsilon}$
as well.
\end{proof}
\begin{cor}
If the normal extension of a function $u$ on $\Gamma$ satisfies
$\delta\tilde{I}_{T_{\epsilon}}[\bar{u}]=0$, then $u$ is a solution
of $\delta I_{\Gamma}[u]=0$. 
\end{cor}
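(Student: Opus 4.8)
The plan is to recognize that this corollary is the converse of Theorem~\ref{thm:Equivalent} and that the computation already displayed in the proof of that theorem does almost all of the work, once we isolate the single place where the Euler--Lagrange hypothesis was actually used. First I would observe that every equality in that chain, up to the very last ``$=0$'', holds for an \emph{arbitrary} $C^{2}$ function $u$ on $\Gamma$: the replacement of $A\nabla\bar u$ by $\overline{\nabla_{\Gamma}u}$ uses only \eqref{eq:gradient-projection-tensor}, the identity $L_{\mathbf p}(P_{\Gamma}z,\bar u,\overline{\nabla_{\Gamma}u})=\overline{L_{\mathbf p}(x,u,\nabla_{\Gamma}u)}$ is just the definition of the closest-point extension, and the relation $J^{-1}\nabla\cdot(JA\,\overline{V})=\overline{\nabla_{\Gamma}\cdot V}$ is exactly Theorem~\ref{thm:A1}. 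Consequently the derivation establishes, with no assumption on $u$ whatsoever, the pointwise factored identity
\[
\delta\tilde{I}_{T_{\epsilon}}[\bar{u}]=J\,\overline{\bigl(-\nabla_{\Gamma}\cdot L_{\mathbf p}(x,u,\nabla_{\Gamma}u)+L_{q}(x,u,\nabla_{\Gamma}u)\bigr)}=J\,\overline{\delta I_{\Gamma}[u]}\qquad\text{in }T_{\epsilon}.
\]

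With this identity in hand the argument is short. Given the hypothesis $\delta\tilde{I}_{T_{\epsilon}}[\bar{u}]=0$, the identity forces $J(z)\,\overline{\delta I_{\Gamma}[u]}(z)=0$ for every $z\in T_{\epsilon}$, so the next step is to divide out $J$. By \eqref{eq:The-Jacobian} we have $J(z)=\prod_{j=1}^{2}\bigl(1-d_{\Gamma}(z)\kappa_{j}(z)\bigr)$, and since $|d_{\Gamma}(z)|<\epsilon<\kappa_{\infty}^{-1}$ on $T_{\epsilon}$ each factor is strictly positive; hence $J>0$ throughout $T_{\epsilon}$ and we may conclude $\overline{\delta I_{\Gamma}[u]}\equiv 0$ in $T_{\epsilon}$.

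Finally I would restrict to $\Gamma$. Because $\overline{(\cdot)}$ is the closest-point extension and $P_{\Gamma}z=z$ for $z\in\Gamma$, the trace of $\overline{\delta I_{\Gamma}[u]}$ on $\Gamma$ is precisely $\delta I_{\Gamma}[u]$; vanishing of the extension therefore yields $\delta I_{\Gamma}[u]=0$ on $\Gamma$, which is the surface Euler--Lagrange equation. The only genuinely substantive point is the first one --- that the derivation in Theorem~\ref{thm:Equivalent} is hypothesis-free and really produces the factored identity rather than merely the one-directional implication --- so I would state that identity explicitly as the crux of the proof; the positivity of $J$ in the narrowband and the restriction to $\Gamma$ are then routine. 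I expect no real obstacle beyond confirming that Theorem~\ref{thm:A1} is valid for all admissible $u$ and not only for critical points, and that the standing hypothesis $L_{\mathbf p}(x,q,\mathbf t)\cdot\mathbf n(x)=0$ carries over so that $A\nabla\bar u=\overline{\nabla_{\Gamma}u}$ remains exact.
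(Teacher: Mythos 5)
Your proposal is correct and follows essentially the same route as the paper: the paper's proof of this corollary is simply ``this follows from the proof of Theorem \ref{thm:Equivalent} directly,'' and what you have written out — that the chain of equalities there is hypothesis-free and yields the factored identity $\delta\tilde{I}_{T_{\epsilon}}[\bar{u}]=J\,\overline{\delta I_{\Gamma}[u]}$, after which positivity of $J$ in $T_{\epsilon}$ and restriction to $\Gamma$ finish the argument — is precisely the content that remark leaves implicit. Your explicit treatment of $J>0$ and of the trace on $\Gamma$ is a faithful, slightly more careful rendering of the same idea.
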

\begin{proof}
This follows from the proof in Theorem \ref{thm:Equivalent} directly.
\end{proof}
\begin{cor}
If the Euler-Lagrange equation $\delta I_{\Gamma}[u]=0$ is solvable
and $\delta\tilde{I}_{T_{\epsilon}}[v]=0$ with the Neumann boundary
condition has the unique solution $v$, then $v$ is constant-along-normal
and $v_{\Gamma}$, the restriction of $v$ on $\Gamma$, is the unique
solution of $\delta I_{\Gamma}[u]=0$.
\end{cor}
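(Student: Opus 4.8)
The plan is to combine Theorem~\ref{thm:Equivalent} with the uniqueness hypothesis on the extended Neumann problem; the uniqueness is precisely what forces the extended solution to coincide with the normal extension of a surface solution, after which both conclusions follow at once.

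First I would invoke the solvability hypothesis on $\Gamma$: let $u^{*}$ be any solution of $\delta I_{\Gamma}[u^{*}]=0$. By Theorem~\ref{thm:Equivalent}, its normal extension $\overline{u^{*}}=u^{*}(P_{\Gamma}\cdot)$ satisfies $\delta\tilde{I}_{T_{\epsilon}}[\overline{u^{*}}]=0$ together with the Neumann boundary condition on $\partial T_{\epsilon}$. Hence $\overline{u^{*}}$ is a solution of the extended Neumann problem.

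Next I would apply the uniqueness hypothesis: since this problem admits a unique solution $v$ and $\overline{u^{*}}$ is such a solution, we conclude $v=\overline{u^{*}}$. This yields the first conclusion immediately, for $\overline{u^{*}}$ is by construction constant on each normal fiber $\{z\in T_{\epsilon}:P_{\Gamma}z=x\}$, so $v$ is constant-along-normal. Restricting to $\Gamma$, where $P_{\Gamma}$ is the identity, gives $v_{\Gamma}=\overline{u^{*}}|_{\Gamma}=u^{*}$, so that $v_{\Gamma}$ solves $\delta I_{\Gamma}[\cdot]=0$.

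It remains to upgrade this to uniqueness on $\Gamma$. Suppose $u_{1}$ and $u_{2}$ both solve $\delta I_{\Gamma}[u]=0$. Repeating the first two steps, their normal extensions $\overline{u_{1}}$ and $\overline{u_{2}}$ are both solutions of the extended Neumann problem, so by uniqueness of $v$ each equals $v$; restricting to $\Gamma$ forces $u_{1}=u_{2}$. Thus $v_{\Gamma}$ is the unique solution of $\delta I_{\Gamma}[u]=0$. I expect no analytic obstacle here: the only point to verify is that the normal-extension map $u\mapsto\overline{u}$ and the restriction map $v\mapsto v_{\Gamma}$ are mutually inverse on the relevant solution sets, which holds because $P_{\Gamma}$ fixes $\Gamma$ pointwise and normal extensions are constant-along-normal by definition. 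The corollary is therefore a purely logical consequence of Theorem~\ref{thm:Equivalent} and the uniqueness assumption.
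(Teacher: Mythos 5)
Your proof is correct and follows essentially the same route as the paper's: invoke Theorem~\ref{thm:Equivalent} to show the normal extension of a surface solution solves the extended Neumann problem, then use the uniqueness hypothesis to identify it with $v$. Your explicit treatment of uniqueness on $\Gamma$ (via two solutions $u_{1},u_{2}$ having extensions that both equal $v$) simply spells out what the paper leaves implicit in "has exactly one solution as well."
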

\begin{proof}
Let $u$ be a solution of $\delta I_{\Gamma}[u]=0$. Then by Theorem
\ref{thm:Equivalent}, $\overline{u}$ is a solution for $\delta\tilde{I}_{\Gamma}[v]=0$
with Neumann boundary condition. By uniqueness, we conclude that $v=\overline{u}$.
Hence $v$ is constant-along-normal and $\delta I_{\Gamma}[u]=0$
has exactly one solution as well.
\end{proof}
\begin{example}
The Laplace-Beltrami equation on $\Gamma$ reads as
\begin{equation}
-\Delta_{\Gamma}u=f\,\text{on}\,\Gamma,\label{eq:LB}
\end{equation}
and its extension in $T_{\epsilon}$ is

\begin{equation}
-\nabla\cdot(JA^{2}\nabla v)=\overline{f}J\,\,\text{on}\,\,T_{\epsilon},\,\,\frac{\partial v}{\partial n}=0\,\,\text{on}\,\,\partial T_{\epsilon}.\label{eq:LB_extended}
\end{equation}
Since $\int_{T_{\epsilon}}\overline{f}\,Jdz=\int_{\Gamma}f\,dS$,
the solvability for both equations are the same. We convexify the
extended energy by using a positive constant $\mu$ in $A(z;\mu)$.
The solution is unique up to an additive constant for both the original
and extended problems. Therefore a solution of \eqref{eq:LB_extended}
is a normal extension of some solution of \eqref{eq:LB}. In the weak
formulations, the solution space for \eqref{eq:LB} is usually chosen
to be $H^{1}(\Gamma)$ with zero mean, and the solution space for
\eqref{eq:LB_extended} is usually chosen to be $H^{1}(T_{\epsilon})$
with zero mean.
\end{example}

\subsection{Boundary closure\label{subsec:proposed-BC}}

In the previous two subsections, we derive the integral operators
$\tilde{I}_{T_{\epsilon}}$ which are equivalent to $I_{\Gamma}$.
The integral $\tilde{I}_{T_{\epsilon}}[v]$ or it's variational derivative
and the accompanying boundary conditions will be discretized. In this
paper, we consider meshes that do not follow the geometry, in particular
Cartesian grids, and unavoidably, we need to address the issue of
discretization near the boundary $\partial T_{\epsilon}$. More specifically,
near $\partial T_{\epsilon}$ numerical approximation of partial derivatives
of $v$ will involve ``ghost-values'' on some ``ghost nodes''
lying outside of $T_{\epsilon}$ In this section, we describe a boundary
closure procedure that works specifically to our setup. 
\begin{assumption}
\label{assu:dv/dn=00003D0} The solution to the extended problem,
$\tilde{I}_{T_{\epsilon}}[v]=\bar{f}$ or $\delta\tilde{I}_{T_{\epsilon}}[v]=0$
with suitable boundary conditions, satisfies
\[
\frac{\partial v}{\partial n}:=\mathbf{n}\cdot\nabla v\equiv0\,\,\,\text{in}\,\,\,T_{\epsilon},
\]
 where $\mathbf{n}(z)=\nabla d_{\Gamma}(z)$ is the normal vector
of $\Gamma$ at $P_{\Gamma}z.$
\end{assumption}
We illustrate the essential idea of the proposed boundary closure
by finite difference methods in the left subfigure of Figure~\ref{fig:domain_ellipse}.
In the subfigure, the red nodes, also called as ghost nodes, are outside
of $T_{\epsilon}$ but they are needed as part of the finite difference
stencil for some blue nodes inside $T_{\epsilon}.$ We need to assign
values to these red nodes in order to close the discretized system
that contains only the interior blue nodes. We propose that:
\begin{itemize}
\item Each ghost node will be projected along the respective normal line
back into a cell close to $\partial T_{\epsilon}$. In the figure,
one of such cells is outlined in blue. 
\item An interpolation is performed using the grid values surrounding the
blue cell. The order of interpolation must be higher or equal to the
order used in discretization of the PDE on $T_{\epsilon}.$ The higher
order interpolation the wider stencil as well as the wider narrowband.
\end{itemize}
Figure~\ref{fig:domain_ellipse} shows an illustration of the boundary
closure procedure that we implemented for an ellipse. 

\begin{figure}
\begin{centering}
\includegraphics{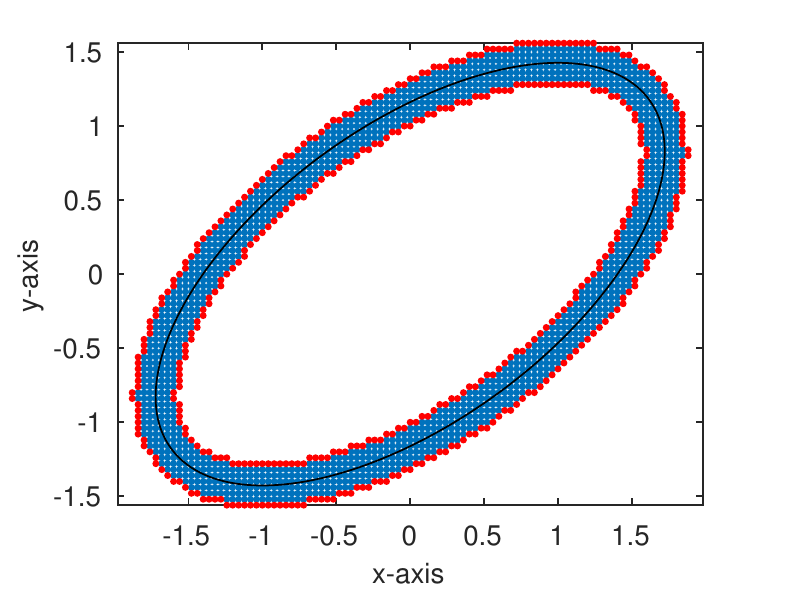}\includegraphics{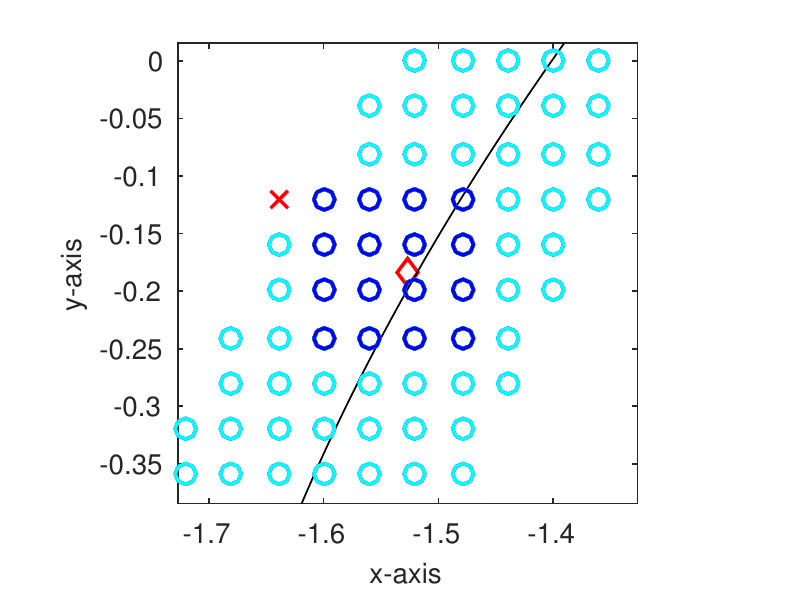}
\par\end{centering}
\caption{\label{fig:domain_ellipse}(Left) Illustration of computational domain
when $\epsilon=3h$ and $h=1/25$. Black curve is the ellipse $\Gamma$,
blue nodes are inner grids and red nodes are ghost nodes. (Right)
The red x represents a ghost nodes and the red diamond represents
the projected node we use to compute the value on the ghost node.
  {Notice that, unlike the close point methods, ghost
nodes do not need to be projected back to $\Gamma$. }The nearby 16
dark blue nodes are used to interpolate the projected node. }
\end{figure}

\subsubsection*{Super-convergence of the boundary closure}

We briefly explain why the proposed boundary closure can replace zero
Neumann boundary conditions at $\partial T_{\epsilon}$. For simplicity,
we demonstrate the two dimensional case. Let $(x_{i},y_{j})$ be a
ghost node lying outside of $T_{\epsilon}$ and $(x_{i}^{\alpha},y_{j}^{\alpha})=(x_{i},y_{j})-\alpha\mathbf{n}_{i,j}$
be the projected node inside $T_{\epsilon}$ for some $\alpha>0$
of order $h$. Let $v_{i,j}^{\alpha}:=v(x_{i}^{\alpha},y_{j}^{\alpha})$
, then
\[
v_{i,j}=v_{i,j}^{\alpha}\iff\frac{v_{i,j}-v_{i,j}^{\alpha}}{\alpha}=0,
\]
which may be considered as a first order approximation of the zero
Neumann boundary condition at $(x_{i}^{*},y_{j}^{*}):=(x_{i},y_{j})-\theta\mathbf{n}_{i,j}\in\partial T_{\epsilon}$
for some $0\le\theta<\alpha.$ We extend the solution constant-along-normal
further from $\partial T_{\epsilon}$ and assume that the extended
$v$ is still smooth (for this we need to require that $\epsilon+2h<\kappa_{\infty}$).
This means that 
\[
\frac{\partial v}{\partial\mathbf{n}}(x_{i}^{*},y_{j}^{*})=\frac{v_{i,j}-v_{i,j}^{\alpha}}{\alpha}+\frac{1}{2}(\alpha-2\theta)\frac{\partial^{2}v}{\partial\mathbf{n}^{2}}(x_{i}^{*},y_{j}^{*})+\mathcal{O}(h^{2}).
\]
 However, if $v$ satisfies Assumption~\ref{assu:dv/dn=00003D0},
$\frac{\partial^{p}v}{\partial\mathbf{n}^{p}}(x_{i}^{*},y_{j}^{*})=0$,
$p=1,2,\cdots$, we see that there is no error in the boundary closure. 

In the procedure described above, $v_{i,j}^{\alpha}$ is then approximated
to sufficient order of accuracy by interpolation. This will perturb
the boundary condition to be of the form
\[
\frac{\partial v}{\partial\mathbf{n}}\approx h^{k-1}\frac{\partial^{k}v}{\partial\mathbf{t}^{k}}+...
\]
for some positive integer $k\geq1$. It is not hard to argue that
the proposed boundary closure yields convergent approximations of
the solution to the given PDE with zero Neumann boundary conditions.
But it is not so trivial to show that the convergence is high order
for pure boundary value problems or evolution problems discretized
by implicit schemes. 

In particular, if we use bi-cubic interpolation centered at grid cell
containing $(x_{i}^{\alpha},y_{j}^{\alpha})$  to approximate $v_{i,j}^{\alpha}$,
simple calculations shows that the leading error in the approximation
of $v_{i,j}^{\alpha}$ is of the form $h^{4}(c_{1}\frac{\partial^{4}u}{\partial x^{4}}+c_{2}\frac{\partial^{4}u}{\partial y^{4}})$
with positive coefficients $c_{1}$ and $c_{2}$. After a change of
coordinates, and assuming that $v$ satisfies Assumption~\ref{assu:dv/dn=00003D0},
the perturbed the boundary condition is given by
\begin{equation}
\frac{\partial v}{\partial\mathbf{n}}=ch^{3}\frac{\partial^{4}v}{\partial\mathbf{t}^{4}}+\text{(lower order in \ensuremath{h} terms)}\label{eq:cubic_perturb_bd}
\end{equation}
with some positive coefficient $c$. In Section \ref{sub:Laplace-Beltrami},
we will see that the sign of $c$ plays an important role in stability
of the proposed method. 

\subsubsection*{Depth of projected points}

We briefly discuss how to choose a suitable projection ``depth''
$\alpha>0$. Let ${\bf x}$ be a ghost node and ${\bf x}^{\alpha}={\bf x}-\alpha{\bf n}$
be the projected node inside $T_{\epsilon}$. When $v$ is constant-along-normal,
any point along the normal direction has the same value of $v$. However,
in numerical experiments, we need to choose a suitable projected node.
The necessary condition for choosing the projected node is that the
nearby surrounding nodes used to interpolate the value of $v$ must
be inside $T_{\epsilon}$. In our numerical simulations, we use cubic
interpolating polynomials in a dimension-by-dimension fashion to approximate
$v({\bf x}^{\alpha})$. Hence it requires $4^{3}$ nearby points for
3 dimensional cases ($4^{2}$ nearby points for 2 dimensional case).
See Figure \ref{fig:domain_ellipse} for illustration for a two dimensional
case. We suggest that the projection depth be as shallow as possible.
Therefore in our numerical simulation, we choose $\alpha=d_{\Gamma}(x)-(\epsilon-2\sqrt{d}h),$
  {which guarantees that the interpolation stencil stays
inside $T_{\epsilon}$ if $\epsilon=mh$, $h<1/m\kappa_{\infty}$
and $m>2.$ }Here $\kappa_{\infty}$ is an upper bound of the curvatures
of $\Gamma$.  {{} }We discuss the effect of different
$\alpha$ in the examples in Section \ref{subsec:Sensitivity-of-boundary}.

\section{Minimization of convex energies\label{sec:Energy-minimization}}

In this section, we shall first study a simple model problem defined
on a flat torus. Such study will reveal many essential properties
of the proposed algorithms, in particular, the stability of the extended
problem, the consistency of the extended gradient flow. 

\subsection{Study of a model problem \label{sub:Laplace-Beltrami} }

Let $\Gamma$ to be the unit interval on the $x$-axis with periodic
boundary conditions at $x=0$ and $1$, and $T_{\epsilon}=[0,1)\times(-\epsilon,\epsilon).$
For any $f\in C([0,1))$ with $f(0)=f(1)$ and $c>0$, we consider
the energy
\[
I_{\Gamma}[u]:=\int_{\Gamma}\frac{1}{2}(|\nabla_{\Gamma}u|^{2}+cu^{2})-fu\,dS=\int_{\Gamma}\frac{1}{2}(|u_{x}|^{2}+cu^{2})-fu\,dx,
\]
and its extension (with $R(v_{y})=v_{y}^{2}$) 
\[
\tilde{I}_{T_{\epsilon}}[v]:=\int_{-\epsilon}^{\epsilon}K_{\epsilon}(y)\left(\int_{0}^{1}\frac{1}{2}(|v_{x}|^{2}+\mu v_{y}^{2}+cv^{2})-\bar{f}v\,dx\right)\,dy.
\]
The Euler-Lagrange equation of $I_{\Gamma}$ is 
\begin{equation}
-u_{xx}+cu=f,\,\,\,0<x<1,\,\,\,\text{and}\,\,u(0)=u(1),u'(0)=u'(1).\label{eq:model-in-the-strip}
\end{equation}
The Euler-Lagrange equation of $\tilde{I}_{T_{\epsilon}}$ and the
natural boundary conditions for $v\in C^{2}(T_{\epsilon})$ are
\begin{equation}
-v_{xx}-\mu v_{yy}+cv=\bar{f}\,\,\,\text{in}\,T_{\epsilon},\,\,\,\,\,v(0,y)=v(1,y),v_{x}(0,y)=v_{x}(1,y)\label{eq:model-extended-Laplace-Beltrami}
\end{equation}
\begin{equation}
v_{y}(x,\pm\epsilon)=0,\,\,\,0\le x<1.\label{eq:model-problem:v_y=00003D0}
\end{equation}
 Here, we make a few observations for the case of $\mu>0$:
\begin{itemize}
\item $\tilde{I}_{T_{\epsilon}}[v]$ \emph{is strictly convex, }and the
Euler-Lagrange equation \eqref{eq:model-extended-Laplace-Beltrami}
of $\tilde{I}_{T_{\epsilon}}$ is uniformly elliptic, and has a unique
solution in $C^{2}(T_{\epsilon})$ with the Neumann boundary condition~\eqref{eq:model-problem:v_y=00003D0}. 
\item The $y$-derivative of the minimizer of $\tilde{I}_{T_{\epsilon}}$
has to be 0, for otherwise, $\tilde{I}_{T_{\epsilon}}$ can still
take smaller values by diminishing $v_{y}^{2}$. 
\item For any function $\tilde{v}$ satisfying $\tilde{v}_{y}\equiv0$ in
$T_{\epsilon},$ $\tilde{I}_{T_{\epsilon}}[\tilde{v}]=I_{\Gamma}[\tilde{v}(\cdot,0)].$
Consequently, we can argue that $u(x):=v(x,0)$ is the minimizer of
$I_{\Gamma}$, the solution of the corresponding Laplace-Beltrami
equation. 
\end{itemize}

\subsubsection{Stability of the constant-along-normal solutions}

We investigate this issue in three regards: (i) whether the interpolation
used in the boundary closure introduces unstable solutions, and (ii)
for time dependent problem, whether perturbation to the initial conditions
will be amplified in time, and (iii) the effect of the inhomogeneous
term not being perfectly constant-along-normal. 

We study these issues with Example~\eqref{sub:Laplace-Beltrami},
involving \eqref{eq:model-extended-Laplace-Beltrami}, with $c>0$.
\begin{equation}
\frac{\partial v}{\partial y}=\begin{cases}
\alpha\frac{\partial^{k}v}{\partial x^{k}}, & y=\epsilon,\\
0, & y=-\epsilon.
\end{cases}\label{eq:perturbed-Neumann}
\end{equation}
  {Due to linearity of the problem, the solution is a
sum of a particular solution and the homogeneous solution ($f\equiv0$)
with perturbed boundary conditions.} Therefore, we need to make sure
that there exists no unstable homogeneous solution. 

After Fourier transform in the $x$ variable, \eqref{eq:model-extended-Laplace-Beltrami}
and \eqref{eq:perturbed-Neumann} become 
\[
\mu\hat{v}_{yy}(\omega,y)=(\omega^{2}+c)\hat{v}(\omega,y),\,\,\,\omega\in2\pi\mathbb{Z},\,\,\,y\in(-\epsilon,\epsilon),
\]
\[
\frac{\partial\hat{v}}{\partial y}=\begin{cases}
\alpha(i\omega)^{k}\hat{v}, & y=\epsilon,\\
0, & y=-\epsilon.
\end{cases}
\]
The general solution of this two-point boundary value problem takes
the form
\[
\hat{v}(\omega,y)=c_{1}e^{\kappa y}+c_{2}e^{-\kappa y},\,\,\,\kappa=\sqrt{\frac{\omega^{2}+c}{\mu}}>0.
\]
 We first observe that if there is a non-trivial solution, then the
magnitude of $\kappa\epsilon=\sqrt{\frac{\omega^{2}+c}{\mu}}\epsilon$
determines an upper bound of the solution's $y$-derivative, which
translates to how far the solution is from being constant-along-normal.
We only need to make sure that the non-trivial solution exists when
$|\omega|$ is asymptotically smaller than $\epsilon^{-1}$. 

From the boundary conditions at $y=\pm\epsilon$, we obtain
\begin{align*}
c_{1}\kappa e^{-\kappa\epsilon}-c_{2}\kappa e^{\kappa\epsilon} & =0,\\
c_{1}\kappa e^{\kappa\epsilon}-c_{2}\kappa e^{-\kappa\epsilon} & =\alpha(i\omega)^{k}(c_{1}e^{\kappa\epsilon}+c_{2}e^{-\kappa\epsilon}),
\end{align*}
After simplification, we obtain 
\begin{equation}
\kappa\,\tanh(2\kappa\epsilon)=\alpha(i\omega)^{k},\label{eq:general_form_of_eigen_value_problem}
\end{equation}
when $c_{1},c_{2}\neq0$. We see that the existence of solutions to
the eigenvalue problem \eqref{eq:general_form_of_eigen_value_problem}
can be classified according to $k$ being an even or odd positive
integer.

\paragraph{$k$ is odd. }

We see that in this case there exists no solution to the eigenvalue
problem \eqref{eq:general_form_of_eigen_value_problem}. Therefore
the only solution is the trivial solution $\hat{v}=0.$ 

\paragraph{$k$ is even. }

More explicitly, the left hand side of equation \eqref{eq:general_form_of_eigen_value_problem}
is 
\[
L(\omega):=\sqrt{\frac{\omega^{2}+c}{\mu}}\tanh\left(2\sqrt{\frac{\omega^{2}+c}{\mu}}\epsilon\right)
\]
 and the right hand side
\[
R(\omega):=\tilde{\alpha}\omega^{k},
\]
where $\tilde{\alpha}=i^{k}\alpha$. If $\tilde{\alpha}\leq0$, $L(\omega)=R(\omega)$
has no solution. If $\tilde{\alpha}>0$, $L(\omega)-R(\omega)$ changes
signs three times as $\omega\rightarrow\pm\infty$, so there are two
roots. Furthermore, for $|\omega|\gg1$, to leading order, $L(\omega)\simeq\frac{|\omega|}{\sqrt{\mu}}$
and $R(\omega)\simeq\tilde{a}\omega^{k}$ \textendash{} there is no
solution of order $\epsilon^{-1}.$ 

Thus, we conclude that the proposed extension has a solution which
is stable with respect to the perturbation in the boundary condition
\eqref{eq:perturbed-Neumann}, imposed on $\partial T_{\epsilon}.$

\subsubsection{Stability of gradient flows}

The gradient descent equation is 
\[
v_{t}=v_{xx}+\mu v_{yy}-cv,\,\,\forall(x,y)\in[0,1)\times(-\epsilon,\epsilon),\quad v_{y}(x,-\epsilon,t)=0,\,v_{y}(x,\epsilon,t)=\alpha\frac{\partial^{k}v}{\partial x^{k}},
\]
and $v$ is periodic in $x$. The heat equation with the perturbed
Neumann boundary condition is stable in the energy norm for most cases.
Let $||\cdot||$ be the standard $L^{2}([0,1)\times(-\epsilon,\epsilon))$
norm. From the energy estimate
\begin{align*}
\frac{1}{2}\frac{d}{dt}||v||^{2} & =-\left(||v_{x}||^{2}+\mu||v_{y}||^{2}+c||v||^{2}\right)+\int_{0}^{1}\mu(vv_{y}(x,\epsilon,t)-vv_{y}(x,-\epsilon,t)dydx\\
 & =-\left(||v_{x}||^{2}+\mu||v_{y}||^{2}+c||v||^{2}\right)+\int_{0}^{1}\alpha\mu v\frac{\partial^{k}v}{\partial x^{k}}(x,\epsilon,t)dx,
\end{align*}
one can deduce the following conclusion: 

\paragraph*{$k$ is odd}

One can easily show that $\int_{0}^{1}v\frac{\partial^{k}v}{\partial x^{k}}dx=0$
by applying integration by parts several times and $v$ is periodic
in $x$. Therefore $\frac{d}{dt}||v||^{2}=-2\left(||v_{x}||^{2}+\mu||v_{y}||^{2}+c||v||^{2}\right)$
and the solution is stable.

\paragraph{$k$ is even}

Let $k=2m$. By applying integration by parts several times and $v$
is periodic in $x$, we have $\int_{0}^{1}v\frac{\partial^{k}v}{\partial x^{k}}\,dx=(-1)^{m}\int_{0}^{1}(\frac{\partial^{m}v}{\partial x^{m}})^{2}\,dx$.
If $\tilde{\alpha}=i^{k}\alpha=(-1)^{m}\alpha\leq0$, then $\frac{d}{dt}||v||^{2}=-2\left(||v_{x}||^{2}+\mu||v_{y}||^{2}+c||v||^{2}\right)+\tilde{\alpha}\|\frac{\partial^{m}v}{\partial x^{m}}\|^{2}$
and the solution is stable. If $\tilde{\alpha}=i^{k}\alpha=(-1)^{m}\alpha>0$,
we can not conclude that the solution is stable. 

In fact, there may exist unstable solutions. Consider $c=\mu=1,k=2$
and $\alpha=-1$. Again, we look for a solution of the form
\[
v(x,y,t)=e^{st+\omega xi}(e^{\kappa y}+e^{-k(y+2\epsilon)}).
\]
Then $v$ is a solution if $s=\kappa^{2}-\omega^{2}-1$ and $\kappa\tanh(2\kappa\epsilon)=\omega^{2}.$
It is not hard to show that the equation $\kappa\tanh(2\kappa\epsilon)=\omega^{2}$
has real solutions for $\omega=2n\pi$ and hence the equation has
unstable solutions if $s>0$. In particular, if we choose $\epsilon=0.01$
and $\omega=2\pi$, then $\kappa\simeq51.1815$ is a root of $\kappa\tanh(0.02\kappa)=(2\pi)^{2}$
and in this case $s=\kappa^{2}-(2\pi)^{2}-1\simeq2579>0$. The solution
$v$ grows exponentially and the equation admits unstable modes. 

As we discuss in Section \ref{subsec:proposed-BC}, in our simulations
the perturbed boundary is of the form 

\[
\frac{\partial v}{\partial\mathbf{n}}\simeq ch^{3}\frac{\partial^{4}v}{\partial\mathbf{t}^{4}},
\]
with some positive number $c$. Therefore the proposed method with
cubic interpolating polynomials in a dimension-by-dimension fashion
is stable for gradient descent equations. 

\subsection{Constraints\label{subsec:Constraints}}

In the model problem \eqref{eq:model-in-the-strip}, if $c=0,$ the
solutions are unique up to an overall additive constant. The addition
of $\mu v_{yy}$ together with \eqref{eq:model-problem:v_y=00003D0}
will not make the solution unique. In this case, one must extend the
additional constraint on $u$ consistently so ensure the uniqueness
of solution. For example, if one minimizes $I_{\Gamma}[u]$ subject
to the constraint $\int_{\Gamma}u^{2}ds=C_{0}$, then one should impose
$\int_{T_{\epsilon}}v^{2}(z)K_{\epsilon}(y)dz=C_{0}$ for the minimization
of $\tilde{I}_{T_{\epsilon}}[v].$

We see that under the proposed framework, integral constraints of
the type $\int_{\Gamma}G(u)ds=C_{0}$ can easily be compatibly extended
to $\int_{T_{\epsilon}}G(v)\,K_{\epsilon}(d_{\Gamma})Jdz=C_{0}.$
In general, using Lagrange multiplier, this type of constrained minimization
problem is reduced to nonlinear Eigenvalue problems. 

Let us demonstrate the solution of a simple model problem via gradient
descent: 
\[
\min_{u}\int_{\Gamma}|\nabla u|^{2}\,dS\,\,\,\text{subject to }\int_{\Gamma}u^{2}dS=C_{0}.
\]
 One way to solve this problem is to introduce a Lagrange multiplier
and solve to steady state the following equation:
\begin{equation}
u_{t}=\Delta_{\Gamma}u+\lambda(t)u,\label{eq:surface_heat_with_lagrange_multiplier}
\end{equation}
where the multiplier $\lambda$ is chosen to be
\begin{equation}
\lambda(t)=\frac{-\int_{\Gamma}u\Delta_{\Gamma}udS}{\int_{\Gamma}u^{2}dS}=\frac{\int_{\Gamma}|\nabla_{\Gamma}u|^{2}dS}{\int_{\Gamma}u^{2}dS}.\label{eq:Lagrange-multiplier}
\end{equation}
Then we can check 
\[
\frac{d}{dt}\int_{\Gamma}u^{2}dS=2\int_{\Gamma}uu_{t}dS=2\int_{\Gamma}u\Delta_{\Gamma}u+\lambda u^{2}dS=0
\]
It is rather straight forward to extend \eqref{eq:surface_heat_with_lagrange_multiplier}
and \eqref{eq:Lagrange-multiplier} following the proposed framework.
We present a computational result computing the solution of the $p-$Laplacian
on torus with an integral constraint in Section \ref{subsec:Minimization-of--Laplacian}.

\subsection{Gradient flow\label{subsec:Gradient-flow} }

The gradient flow of $\tilde{I}_{T_{\epsilon}}[v(\cdot,t)]$ should
be consistent with the $L^{2}$ gradient descent of $I_{\Gamma}[u(\cdot,t)]$.
Again, letting $\bar{u}(z,t)=u(P_{\Gamma}z,t),$ we see that 

\begin{align*}
\begin{split}\frac{d}{dt}I_{\Gamma}[u(\cdot,t)]=\frac{d}{dt}\tilde{I}_{T_{\epsilon}}[\bar{u}(\cdot,t)] & =\frac{1}{2\epsilon}\int_{T_{\epsilon}}\left((-J^{-1}\nabla\cdot(JAL_{{\bf p}})+L_{q})\,\bar{u}_{t}\right)\,Jdz\end{split}
\\
 & =\frac{1}{2\epsilon}\int_{T_{\epsilon}}(-\nabla\cdot(JAL_{{\bf {\bf p}}})+JL_{q})\,\bar{u}_{t}dz.
\end{align*}
This means that in order to have the same gradient flow for the integral
defined by each of the parallel surface, one should consider the $J$-weighted
$L^{2}$ norm for choosing $v_{t}$. The subtlety here is that the
consistent gradient flow of $\tilde{I}_{T_{\epsilon}}$ should be
\[
v_{t}=J^{-1}(\nabla\cdot(JAL_{{\bf p}}(P_{\Gamma}z,v,A\nabla v))+L_{q}(P_{\Gamma}z,v,A\nabla v)),
\]
 instead of
\[
v_{t}=\nabla\cdot(JAL_{{\bf p}}(P_{\Gamma}z,v,A\nabla v))+JL_{q}(P_{\Gamma}z,v,A\nabla v),
\]
even though both equations yield the same steady state (for strictly
convex $\tilde{I}_{T_{\epsilon}}$). 

Finally,\textcolor{blue}{{} }as the time independent cases we have shown
in Theorem \ref{thm:Equivalent}, it can be shown that $v(\cdot,0)\equiv u(\cdot,0)\implies v(\cdot,t)\equiv u(\cdot,t)$;
i.e. that Assumption~\eqref{assu:dv/dn=00003D0} holds for $t>0$
if the initial condition satisfies it. In Section \ref{subsec:Comparison-of-decaying}
we give an example of the rate of decay of $I_{\Gamma}$ and $I_{T_{\epsilon}}$
computed correctly and incorrectly.

\subsection{Tests cases }

In the following, we present a few numerical experiments to study
the proposed formulation and the issues discussed above. For simplicity,
the curvatures and Jacobians used in the following numerical simulations
are computed analytically instead of approximating $DP_{\Gamma}$
by finite differences and compute its singular values.

\subsubsection{Sensitivity of boundary closure to the depth of projection\label{subsec:Sensitivity-of-boundary}}

Consider the Laplace-Beltrami problem $-\Delta_{\Gamma}u=f$ on a
unit sphere $\Gamma$. The extended problem becomes

\[
-r^{2}\Delta v=\overline{f}\quad on\quad T_{\epsilon},\quad\text{and}\quad\frac{\partial v}{\partial n}=0\quad\text{on}\quad\partial T_{\epsilon},
\]
where $r=\sqrt{x^{2}+y^{2}+z^{2}}$ if we choose $\mu=r.$ In our
numerical simulation, we pick $\overline{f}=2\frac{x}{r}$ and the
solution is $u=\frac{x}{r}+c$ for any constant $c$. We use $h=\Delta x=\Delta y=\Delta z=\frac{1}{20}$
and the bandwidth $\epsilon=10h$ in our numerical simulations in
which the boundary closure is carried our at four different ``depths''
$\alpha$. This means that the ghost node $(ih,jh,kh)$ is projected
along the normal, $\alpha$ distance into $T_{\epsilon}$, i.e. 
\[
(ih,jh,kh)\mapsto(ih,jh,kh)-\alpha\mathbf{n}_{i,j,k},
\]
with $\alpha=d(x)-(10-2\sqrt{3})h,\,d(x)-3h,\,d(x),\,d(x)+3h$. We
discretize the PDE by the second order central difference scheme and
use cubic polynomial to interpolate ghost nodes. The resulted linear
system is singular since the solution is unique up to an additive
constant. Therefore we choose the exact solution to be $u=\frac{x}{r}$
and force the first index node in our numerical solution has the same
value as the exact solution. After deleting the first column and first
row in the linear system resulting form the discretization, it is
invertible. The condition number of the reduced linear system and
the error are listed in Table \ref{tab:interpolation_position}. We
can see if $\alpha$ become larger, then the condition number and
error are worse. The structure of the matrices are show in Figure
\ref{fig:structure of matrices}. Notice that the smaller $\alpha$
gives the tighter band structure of the matrix.

Next we study the sensitivity of condition number of the reduced linear
system to the distance between $\partial T_{\epsilon}$ and the interior
grid node closest to it. In certain finite element formulation \cite{olshanskii2010finite},
the smallness of this distance could render the resulting linear system
ill-conditioned. We use $h=\Delta x=\Delta y=\Delta z=\frac{1}{10}$
and $\frac{1}{20}$ with the bandwidth $\epsilon=3h$ in our numerical
simulations.  We perturb the grids $h\mathbb{Z}^{3}\cap T_{\epsilon}$
by $(h\mathbb{Z}^{3}+{\bf r})\cap T_{\epsilon}$ with a random vector
${\bf r}=(r_{x},r_{y},r_{z})h$, where $r,r_{y},r_{z}$ are randomly
generated from uniform distribution on {[}-0.5, 0.5). We repeat 100
times. The minimum distance between interior nodes and $\partial T_{\epsilon}$
versus the condition number of the reduced linear system is shown
in Figure \ref{fig:Distribution of CN}. It shows our method is stable
even when some interior nodes are very close to the boundary of the
computational domain $T_{\epsilon}$.

\begin{table}
\caption{Comparison of different choices of positions to interpolate boundary
points. We can see the closer to the boundary points the better condition
number of matrices and error.\label{tab:interpolation_position}}

\centering{}\medskip{}
\begin{tabular}{|c|c|c|c|c|}
\hline 
$\alpha$ & $d(x)-(10-2\sqrt{3})h$ & $d(x)-3h$ & $d(x)$ & $d(x)+3h$\tabularnewline
\hline 
\hline 
$\kappa(M)$ & 1.4387 e+7  & 1.7978 e+7 & 2.2930 e+7  & 3.1236 e+7\tabularnewline
\hline 
Error & 4.0124 e-4 & 3.9211 e-4 & 4.0148 e-4 & 4.3044 e-4\tabularnewline
\hline 
\end{tabular}
\end{table}

\begin{figure}
\begin{centering}
\includegraphics[scale=0.6]{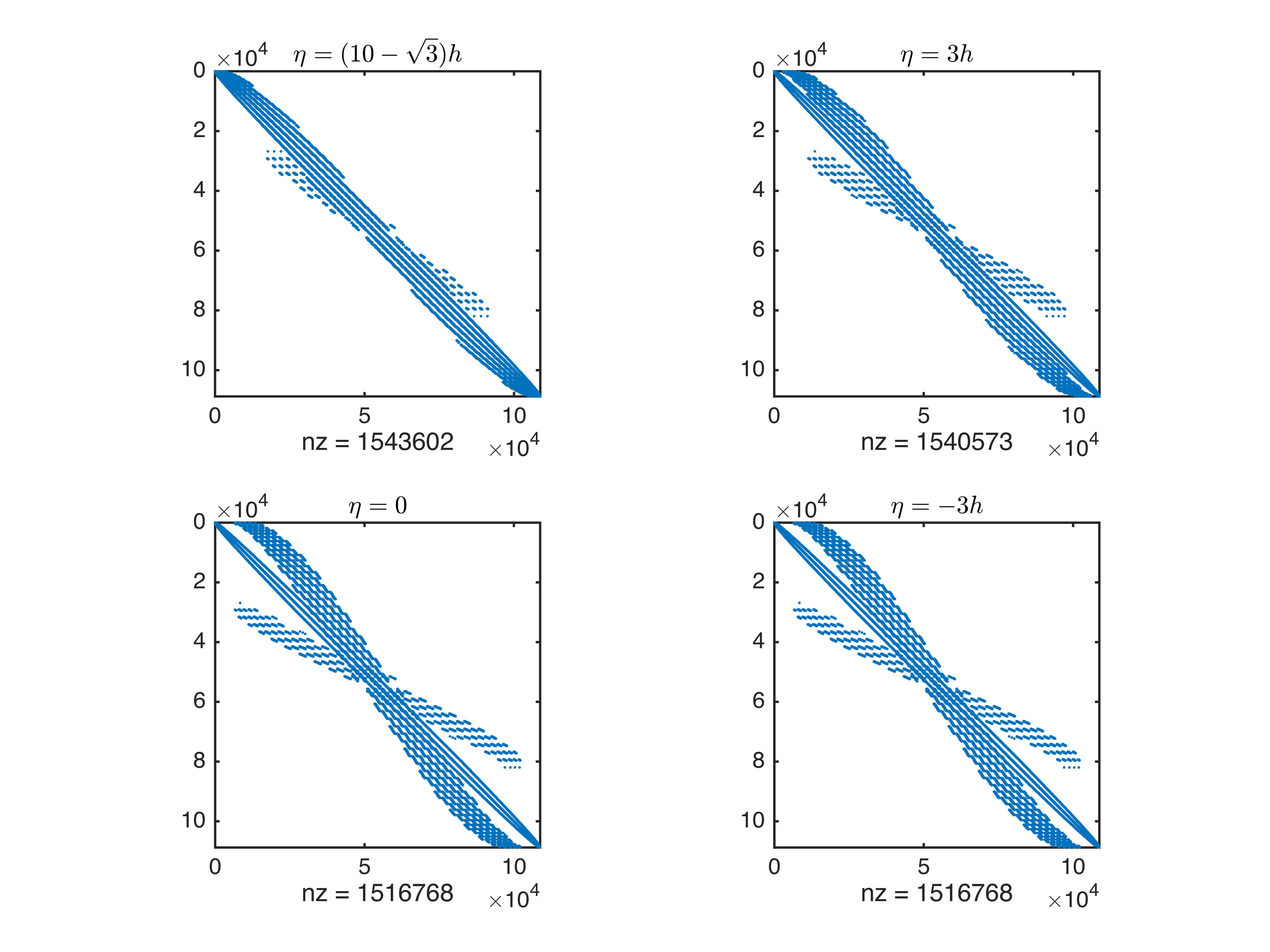}
\par\end{centering}
\caption{The structures of the matrices obtained from different $\eta=d(x)-\alpha$.
The variable nz refers to the size of the matrices.\label{fig:structure of matrices}}
\end{figure}

\begin{figure}
\begin{centering}
\includegraphics{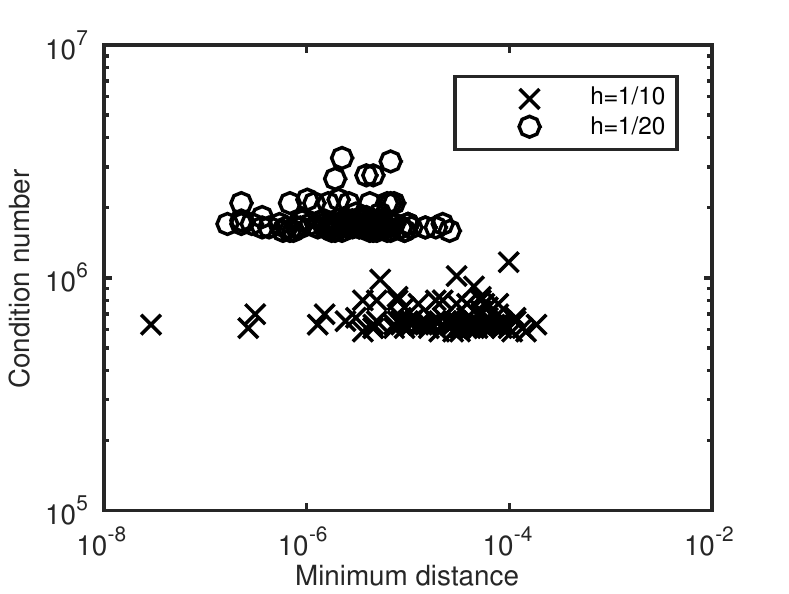}
\par\end{centering}
\caption{Distribution of minimum distances between inner nodes and the boundary
of $T_{\epsilon}$ versus the condition numbers of reduced linear
system among 100 times random perturbation on grid positions. \label{fig:Distribution of CN}}
\end{figure}

\subsubsection{Eigenfunctions of the Laplace-Beltrami operator }

\paragraph{On a circle in three dimensions}

We consider the Laplace-Beltrami eigenvalue problem on a unit circle
in $\mathbb{R}^{3}$ defined by 
\begin{equation}
x(\theta)=\left[\begin{array}{l}
x_{1}(\theta)\\
x_{2}(\theta)\\
x_{3}(\theta)
\end{array}\right]=\left[\begin{array}{ccc}
-\frac{7}{\sqrt{102}} & \frac{1}{\sqrt{2}} & \frac{1}{\sqrt{51}}\\
-\frac{7}{\sqrt{102}} & -\frac{1}{\sqrt{2}} & \frac{1}{\sqrt{51}}\\
\frac{2}{\sqrt{102}} & 0 & \frac{7}{\sqrt{51}}
\end{array}\right]\left[\begin{array}{c}
\cos\theta\\
\sin\theta\\
0
\end{array}\right],\,\,\,0\le\theta<2\pi.\label{eq:circle3d_para}
\end{equation}
This circle is not symmetric in any way with respect to the underlying
Cartesian grid, and we discretize this problem as if we are dealing
with more general smooth curves without using the knowledge of the
parametrization of the circle. Recall that the Laplace-Beltrami eigenvalue
problem is to solve

\begin{equation}
\Delta_{\Gamma}u_{n}=\lambda_{n}u_{n}.
\end{equation}
For simplicity, the kernel function is chosen to be constant. We choose
two free convexification constants $\mu_{1}=\mu_{2}=\sigma^{-1}=1-d_{\Gamma}(p)\kappa(p)$,
where $\kappa$ is the curvature of the parallel curve at point $p$.
Hence $A=\sigma^{-1}I_{3}$ is a scalar tensor and the extended PDE
on $T_{\epsilon}$ is
\begin{equation}
\sigma^{-1}\text{\ensuremath{\nabla}}\cdot(\sigma^{-1}\nabla v_{n})=\lambda v_{n}\label{eq:circle3d}
\end{equation}
In this case, $\sigma^{-1}=\sqrt{u_{1}^{2}+u_{2}^{2}},$ where $[u_{1},u_{2},u_{3}]^{T}=Rx$
and $R$ is the orthogonal matrix in \eqref{eq:circle3d_para}. The
first eigenvalue is 0 and simple and the corresponding eigenfunction
is constant. The rest of the eigenvalues are $n^{2}$ for $n\in\mathbb{N}$,
with multiplicity 2. The corresponding eigenfunctions are $\cos(n\theta+\phi)$
for any arbitrary phase shift $\phi$. 

The numerical simulations are carried out on the grid nodes in $T_{\epsilon}\cap h\mathbb{Z}^{3}$
with $\epsilon=4h$, and $h$ is chosen to be $\frac{1}{10},\frac{1}{20},\frac{1}{40},\frac{1}{80},\frac{1}{160},\frac{1}{320}$.
We discretize \eqref{eq:circle3d} by second order central difference
and computed the eigenvalues of the discretized system as approximations
of exact eigenvalues. The errors of eigenvalues are listed in Table
\ref{tab:LB-circle-3D}. We can see the convergence rate is close
to second-order. In Figure \ref{fig:eigenfun_circle_25_theta}, we
plot the eigenfunction corresponding to $\lambda=25$ on the equal
distance surface $\Gamma_{2h}$ on the left hand side. The eigenfunction
is close to constant on the normal cross section surface. If we projected
the eigenfunction back to the unit circle, we can see it is close
to $\cos5\theta$ after suitable phase shift in right subfigure of
Figure \ref{fig:eigenfun_circle_25_theta}. The error of the eigenfunction
in this case is of magnitude $10^{-5}$.

\begin{figure}
\begin{centering}
\includegraphics{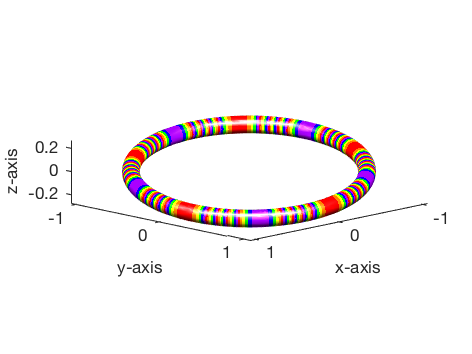}\includegraphics{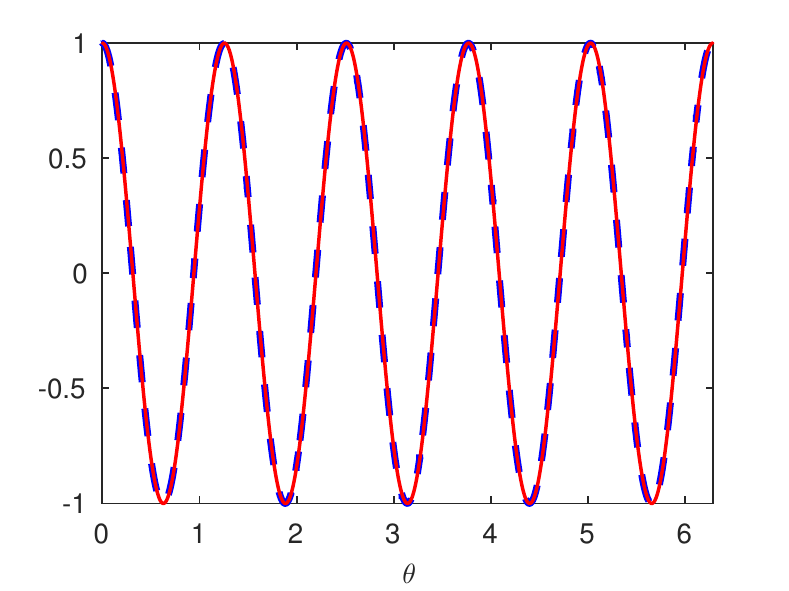}
\par\end{centering}
\caption{The eigenfunction of unit circle corresponding to $\lambda=25$. (Left)
The eigenfunction on the torus surface that is the equal distant surface
$\Gamma_{2h}$ with $h=1/20$. (Right) The eigenfunction with respect
to the $\theta$ parameter on the curve. The blue dashed curve is
numerical solution and the red solid curve is $\cos5\theta$. \label{fig:eigenfun_circle_25_theta}}
\end{figure}

\begin{table}
\caption{\label{tab:LB-circle-3D}The errors of the Laplace-Beltrami eigenvalue
problem for a unit circle in $\mathbb{R}^{3}$.}

\centering{}\medskip{}
\begin{tabular}{|c|c|c|c|c|}
\hline 
$h^{-1}$ & Error for $\lambda$ = 1 & Error for $\lambda$ = 4 & Error for $\lambda$ = 25 & Error for $\lambda$ = 36\tabularnewline
\hline 
\hline 
10 & 6.4982e-4 & 5.4573e-2 & 3.9316e-2 & 1.7026e-1\tabularnewline
\hline 
20 & 6.2345e-5 & 1.5447e-3 & 9.2460e-3 & 3.4550e-2\tabularnewline
\hline 
40 & 5.7136e-5 & 6.5797e-4 & 2.6530e-3 & 3.7635e-3\tabularnewline
\hline 
80 & 3.1412e-6 & 1.8924e-4 & 2.8500e-3 & 6.9070e-3\tabularnewline
\hline 
160 & 7.0887e-7 & 3.4274e-4 & 1.4435e-4 & 6.4697e-4\tabularnewline
\hline 
320 & 7.0887e-7 & 1.5974e-5 & 5.4665e-5 & 4.2973e-4\tabularnewline
\hline 
Order & 1.8814   & 1.9144 & 1.8672  & 1.6997\tabularnewline
\hline 
\end{tabular}
\end{table}

\paragraph{On a torus in three dimensions}

Let $\Gamma$ be the torus $(\sqrt{x^{2}+y^{2}}-0.7)^{2}+z^{2}=0.3^{2}$
in $\mathbb{R}^{3}$. Consider the Laplace-Beltrami eigenvalues $\lambda_{n}$
and eigenfunctions $\phi_{n}$ of $\Gamma$:

\[
-\Delta_{\Gamma}\phi_{n}=\lambda_{n}\phi_{n}.
\]
We compute the eigenvalues and eigenfunctions by solving the extended
eigenvalue problems:

\[
-J^{-1}\nabla\cdot(JA^{2}\nabla\psi_{n})=\lambda_{n}\psi_{n}\,\,\text{in}\,T_{\epsilon},\,\,\frac{\partial\psi_{n}}{\partial n}=0\,\,\text{on}\,\partial T_{\epsilon}.
\]
In this numerical simulation, we use grid nodes in $T_{\epsilon}\cap h\mathbb{Z}^{3}$
with $\epsilon=4h$ and $h$ is chosen to be $\frac{1}{20}.$ We compute
the discrete approximation of the differential operator of the extended
equation. Then solve the eigenvalues and eigenvectors of the induced
linear system. Figure \ref{fig: e-vec of Laplace Beltrami on a torus}
shows a computational result of an eigenfunction corresponding to
the 6th eigenvalue $\lambda_{6}\simeq6.51$ of the Laplace-Beltrami
operator on the torus $\Gamma$.

\begin{figure}
\begin{centering}
\includegraphics[height=2in]{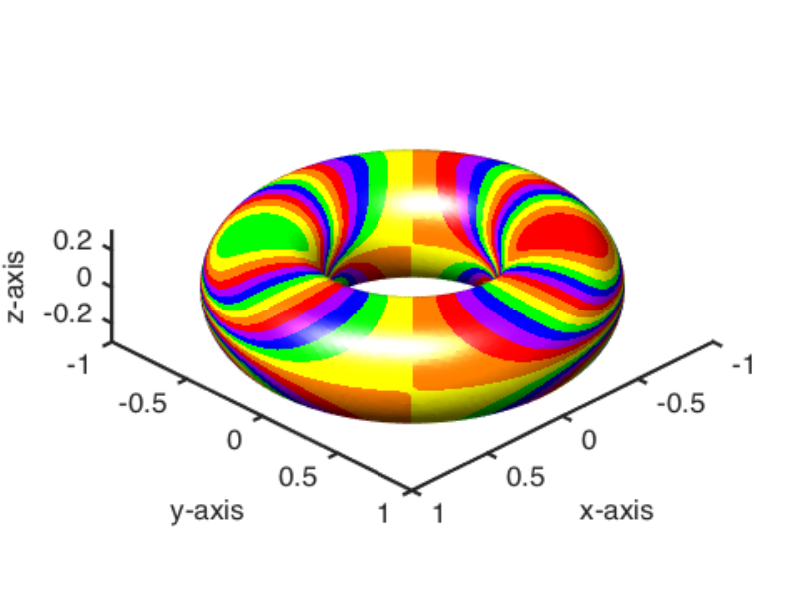}\includegraphics[height=1.75in]{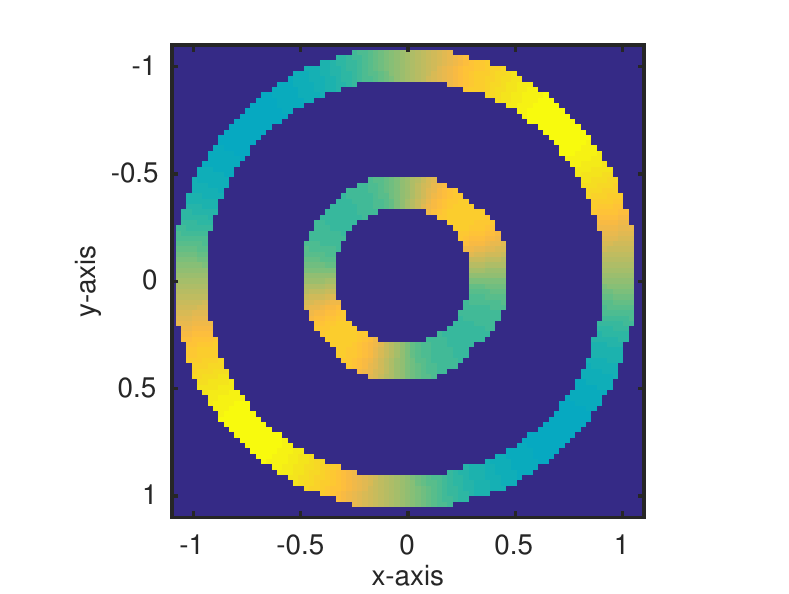}
\par\end{centering}
\caption{An eigenfunction of the Laplace-Beltrami operator on a torus. The
results are computed by the proposed algorithm. (Left) The colormap
used here reveals the structure of the eigenfunctions' level lines.
(Right) The $z=0$ section of the solution, indicating that the computed
solution is indeed constant-along-normal. \label{fig: e-vec of Laplace Beltrami on a torus}}
\end{figure}

\subsubsection{Energy decay rates \label{subsec:Comparison-of-decaying}}

Let $\Gamma$ be the ellipse with major axis equal to 4 and minor
axis equal to 2 in $\mathbb{R}^{2}$. The angle between the major
axis and positive $x-$axis is set to be $\frac{\pi}{5}$. See Figure
\ref{fig:domain_ellipse} for illustration for the computational domain.
The ellipse can be parametrized by 
\[
\left[\begin{array}{l}
x(\theta)\\
y(\theta)
\end{array}\right]=\left[\begin{array}{rr}
\cos\frac{\pi}{5} & -\sin{\frac{\pi}{5}}\\
\sin\frac{\pi}{5} & \cos{\frac{\pi}{5}}
\end{array}\right]\left[\begin{array}{r}
2\cos\theta\\
\sin\theta
\end{array}\right].
\]
Notice that $\theta$ is not arc-length parameter and the arc-length
function $s(\theta)=\int_{0}^{\theta}(3\sin^{2}\beta+1)\,d\beta$
is computed numerically. The energy function $I_{\Gamma}$ is given
by $I_{\Gamma}(u)=\frac{1}{2}\int_{\Gamma}|\nabla_{\Gamma}u|^{2}\,dS=\frac{1}{2}\int_{\Gamma}u_{s}^{2}\,dS$.
The gradient decent flow $u(t,s)$ of $I_{\Gamma}$ satisfies the
heat equation 
\begin{equation}
u_{t}=u_{ss}\label{eq:heat_gamma}
\end{equation}
on $\Gamma$. If we choose free variable $\mu=\sigma^{-1}$ and constant
kernel function, the extended energy function $J_{T_{\epsilon}}$
of $I_{\Gamma}$ is
\begin{equation}
J_{T_{\epsilon}}(v)=\frac{1}{2}\int_{T_{\epsilon}}\sigma^{-1}|\nabla v|^{2}\:dz.
\end{equation}
As discussed in Section \ref{subsec:Gradient-flow}, instead of solving
\begin{equation}
v_{t}=\nabla\cdot(\sigma^{-1}\nabla v),\label{eq:heat_T_wrong}
\end{equation}
the gradient flow of $J_{T_{\epsilon}}$ should be 
\begin{equation}
v_{t}=\sigma^{-1}\nabla\cdot(\sigma^{-1}\nabla v).\label{eq:heat_T_right}
\end{equation}
In the numerical simulation, the initial data is set to be $u(s,0)=\sin(ks)$,
where $k=\frac{2\pi}{L}$ and $L\simeq9.6884$ is the total length
of the ellipse $\Gamma$. The exact solution of \eqref{eq:heat_gamma}
is $u(s,t)=e^{-k^{2}t}\sin(ks)$. We apply forward Euler in time and
discretize the equation by using $\Delta x=\Delta y=h=\frac{1}{100}$
and $\Delta t=\frac{h^{2}}{10}$ with $\epsilon=3h$. We compare the
$L_{\infty}$-error $\|v(s,\eta,t)-e^{-k^{2}t}\sin(ks))\|_{\infty,T_{\epsilon}}$
on $T_{\epsilon}$ and the energy decaying rate with the solutions
obtained from \eqref{eq:heat_T_wrong}, \eqref{eq:heat_T_right} numerically
and from the closest point method \cite{ruuth_merriman08}.   {We
use the standard second order central difference scheme for $\nabla\cdot(\sigma^{-1}\nabla v)$
and bi-cubic interpolation in the boundary closure. The energy errors
are integrated numerically using \eqref{eq:real-def-of-J_Teps} with
$\bar{L}=\frac{1}{2}|\nabla v|^{2}$and $K_{\epsilon}(d)=\frac{1}{2\epsilon}(1+\cos(\frac{d}{\epsilon}))\chi_{[-1,1]}(d)$
where $\chi_{[-1,1]}$ is the indicator function of $[-1,1]$. \eqref{eq:real-def-of-J_Teps}
is discretized be central differencing for $\nabla v$ and trapezoidal
rule for the integral. For close point method, we use second order
central difference scheme to compute Laplacian and bi-cubic interpolation
for close point extension procedure. In computing the energy error
in the solution of the closest point method, $|\nabla v|^{2}$ is
replaced by the values on $\Gamma$, i.e.} $|\nabla v(P_{\Gamma}x)|^{2}$.
The results are shown in Figure \ref{fig:energy decay}. We can see
the proposed method with correct decaying rate has smallest error
in both $L_{\infty}$-norm and energy norm.

Next, we apply Crank-Nicolson time discretization for this example.
The resulting matrices are inverted by the built-in GMRES scheme in
Matlab to solve the linear system with tolerance $10^{-13}$. In Table~\ref{tab:Implicit},
we report certain aspects of the computational results obtained with
$h=\frac{1}{50},\frac{1}{100},\frac{1}{200},\frac{1}{400},\frac{1}{800,}\frac{1}{1600}$
, $\Delta t=\frac{h}{10}$ , $\epsilon=3h$; these include the condition
numbers of the matrices $I-\frac{\Delta t}{2}Q_{h}$, where $Q_{h}$
corresponds to the centered differencing discretization of the spatial
derivatives, the average of numbers of iterations and the $L_{\infty}$-errors
$\|v(s,\eta,t)-e^{-k^{2}t}\sin(ks))\|_{\infty,T_{\epsilon}}$ on $T_{\epsilon}$
for $0\le t\le2.$ The condition number scales like $O(\frac{1}{h})$
and the convergence rate is second-order as we expect. 

\begin{figure}
\begin{centering}
\includegraphics{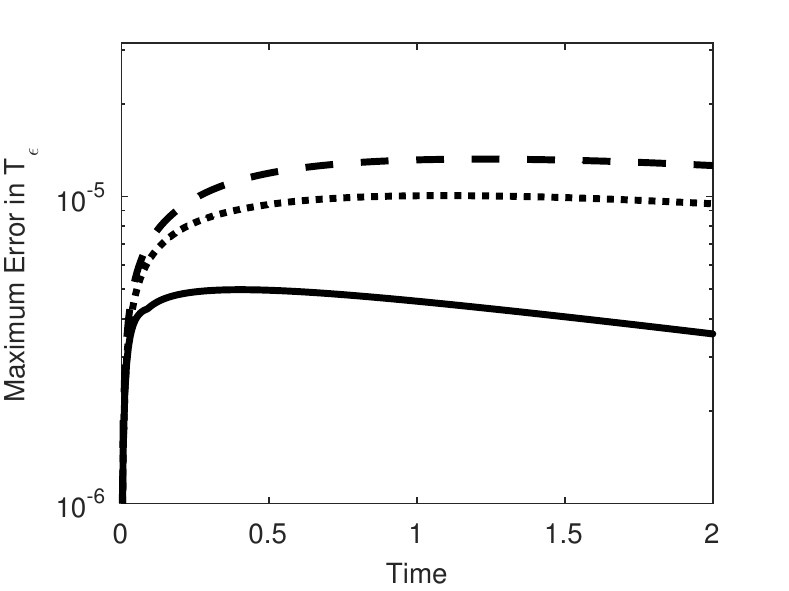}\includegraphics{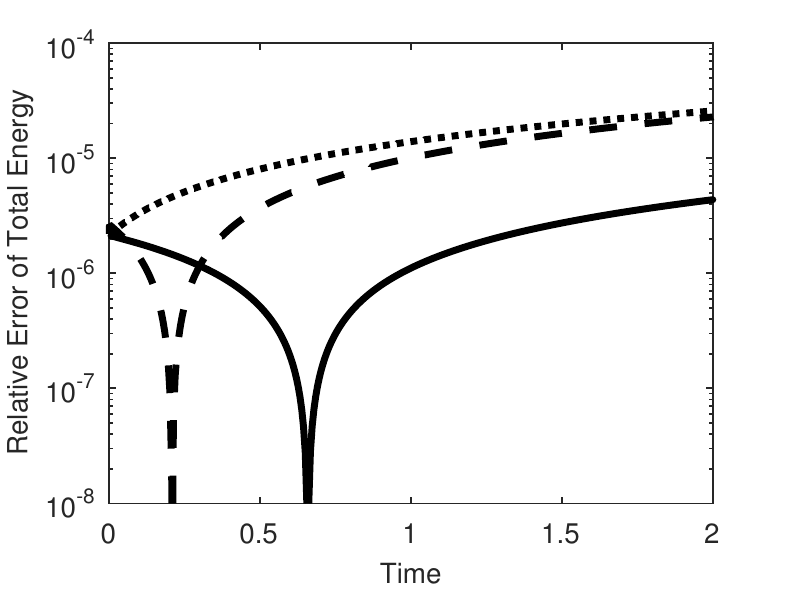}
\par\end{centering}
\caption{(Left) The $L_{\infty}$ -error on the narrowband $T_{\epsilon}$.
(Right) The relative error of energy on the narrow band $T_{\epsilon}$.
The solid curves are obtained by proposed method solving \eqref{eq:heat_T_right}
and the dashed curves are with \eqref{eq:heat_T_wrong}. The dotted
curves are obtained by the closest point method. \label{fig:energy decay}}
\end{figure}

\begin{table}
\caption{The performance of the proposed method with Crank-Nicolson method
in time.\label{tab:Implicit}}
\medskip{}

\centering{}%
\begin{tabular}{|c|c|c|c|c|}
\hline 
$h^{-1}$ & Condition number  & \# of iterations & $L_{\infty}$-error & Order\tabularnewline
\hline 
\hline 
50 & 23.94 & 8.2 & 1.4459e-5 & \tabularnewline
\hline 
100 & 45.00 & 9.7 & 3.5625e-6 & 2.0210\tabularnewline
\hline 
200 & 88.82 & 9.7 & 9.2893e-7 & 1.9393\tabularnewline
\hline 
400 & 176.70 & 12.6 & 2.2653e-7 & 2.0358\tabularnewline
\hline 
800 & 353.24 & 17.6 & 5.7933e-8 & 1.9673\tabularnewline
\hline 
1600 & 708.40 & 25.5 & 1.4441e-8 & 2.0042\tabularnewline
\hline 
\end{tabular}
\end{table}

\subsubsection{Gradient flow of Allen-Cahn equation}

Let $\Gamma$ be the ellipse given in Section \ref{subsec:Comparison-of-decaying}.
We consider the Modica-Mortola energy on $\Gamma$:

\begin{equation}
I_{\Gamma}(u)=\int_{\Gamma}\frac{\delta}{2}\|\nabla_{\Gamma}u\|^{2}+\frac{1}{4\delta}(1-u^{2})^{2}\,dS,\,\,\,0<\delta\ll1,
\end{equation}
and its gradient descent by the Allen-Cahn equation on $\Gamma$:

\begin{equation}
u_{t}=u_{ss}+\frac{1}{\delta^{2}}u(1-u^{2}).\label{eq:Allen-Cahn}
\end{equation}
The extended equation of \eqref{eq:Allen-Cahn} is simply 

\[
v_{t}=\sigma^{-1}\nabla\cdot(\sigma^{-1}\nabla v)+\frac{1}{\delta^{2}}v(1-v^{2}),\quad\text{on}\,\,T_{\epsilon},
\]
with zero Neumann boundary condition. Since the Modica-Mortola energy
is not strictly convex, the equation has non-unique minimizers and
the minimizers depend on initial conditions. In this numerical experiment,
we pick $\delta=0.03$. We use forward Euler in time and choose $h=\frac{1}{200}$,
$\epsilon=0.3$ with $\Delta t=\frac{h^{2}}{10}$. The initial condition
is set to be 

\[
v(x,0)=\sum_{j=11}^{13}\sin\left(\frac{2j\pi s^{2}}{L^{2}}\right),
\]
where $L\simeq9.6884$ is the total length of the ellipse $\Gamma$
and $s$ is the arc-length parameter on $\Gamma$. Since there is
no analytical form for the exact solution, we use very fine grid (2000
equidistant points on $\Gamma$) and apply central difference scheme
with forward Euler in time to discretize \eqref{eq:Allen-Cahn} directly
as the reference solution. We compare our proposed method with a modified
version of the closest point method (CPM).  {{} This modified
CPM is not recommended in practice. We use this example to illustrate
that CPMs have an internal time scale that is defined by the frequency
of the ``reinitialization'' steps, and successful application of
a CPM can depend on such internal time scale and its relation to the
intrinsic time scale of the problem. }

In the modified CPM, we solve

\[
v_{t}=\Delta v(cp(x))+\frac{1}{\delta}v(1-v^{2})
\]
by the standard central difference scheme for the Laplacian, and forward
Euler method in time. Here $cp(x)=P_{\Gamma}x$ is the closest point
function. Instead of reinitializing $v$ to be constant-along-normal
after each time step (as in the original CPM), we reinitialize every
$h/2$ time interval (every 1000 explicit Euler steps).  {{}
The chosen time interval would be possible for a CPM that uses implicit
time stepping, but as this example shows, this choice may not be appropriate.
We refer the readers to \cite{macdonald_ruuth09} for the implicit
closest point method, and emphasize that the real performance of an
appropriately implemented implicit CPM may be different from what
is presented in this example.} The errors computed by two different
methods are shown in Figure \ref{fig:Allen-Cahn}. The oscillation
of the error computed by the modified CPM is due to the insufficient
frequency of reinitialization compared to the small time scales in
the dynamics. The large error near $t=1$ indicates the phase transition
of the gradient descent is incorrect. In the right subfigure in Figure
\ref{fig:Allen-Cahn}, we show the modified CPM solution at $t=0.9774975$,
right before the reinitialization step. In the center subplot of the
same Figure, we show the solution computed by the proposed method.
The modified CPM solution, before the last reinitialization, is not
constant-along-normal in the right top corner. One may see that in
such region, reinitialization can either set $v$ to back to the right
``phase'' value or to the opposite phase value, depending of the
developed pattern. Furthermore, since the transition time for Allen-Cahn
is very short, if reinitialization in a CPM algorithm is not applied
sufficiently frequently, the solution can lead to wrong gradient descent
flow and possibly different steady state pattern. 

\begin{figure}
\begin{centering}
\includegraphics{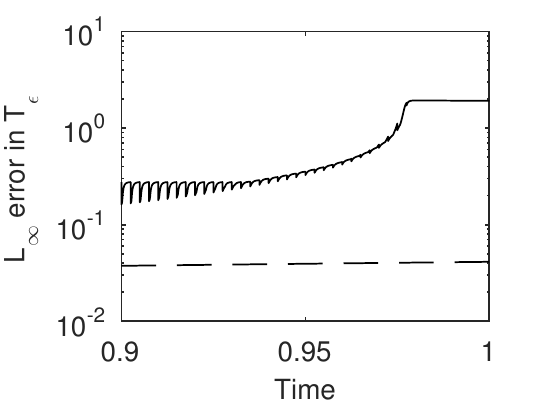}\includegraphics{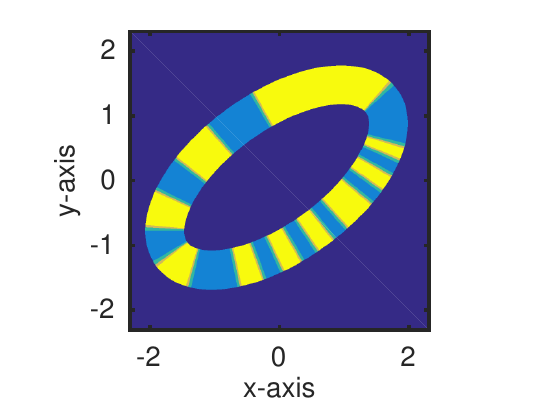}\includegraphics{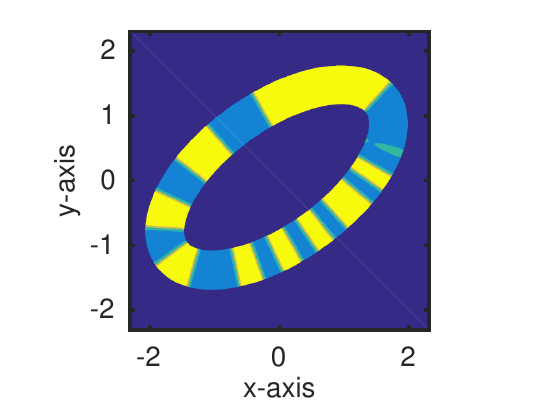}
\par\end{centering}
\caption{(Left) $L_{\infty}$-error in $T_{\epsilon}$ at different time step.
The dashed line is obtained by the proposed method and the solid line
is obtained by modified CPM. (Center) The snapshot of the solution
by the proposed method at $t=$ 0.9774975. (Right) The snapshot of
the solution by our modified CPM at the same time, before the last
reinitialization step. \label{fig:Allen-Cahn}}
\end{figure}

\subsubsection{Minimization of $p$-Laplacian on a torus with constraint\label{subsec:Minimization-of--Laplacian}}

Let $\Gamma$ be the torus $(\sqrt{x^{2}+y^{2}}-0.7)^{2}+z^{2}=0.3^{2}$
in $\mathbb{R}^{3}$. We can parametrize $\Gamma$ by $(x,y,z)=((0.7+0.3\cos\phi)\cos\theta,(0.7+0.3\cos\phi)\sin\theta,0.3\sin\phi),\theta,\phi\in[0,2\pi).$
Consider the energy function

\begin{equation}
I_{\Gamma}(u)=\frac{1}{p}\int_{\Gamma}|\nabla_{\Gamma}u|^{\,p}\,dS-\int_{\Gamma}f\,u\,dS.
\end{equation}
with constraint
\[
\|u\|_{2,\Gamma}=\int_{\Gamma}u^{2}\,dS=1.
\]
We use the method proposed in Section \ref{subsec:Constraints} to
solve the minimization problem of $I_{\Gamma}$ with the constraint.
We solve the gradient flow 

\[
v_{t}=J^{-1}\nabla\cdot(|\,A\nabla u\,|^{p-2}JA^{2}\nabla v)+\overline{f}+\lambda(t)v,
\]
on the narrowband $T_{\epsilon}$ of $\Gamma$ and the Lagrange multiplier
is chosen to be
\[
\lambda(t)=\frac{\int_{T_{\epsilon}}|\,A\nabla v\,|^{p}JdS-\int_{T_{\epsilon}}\overline{f}\,v\,JdS}{\int_{\Gamma}v^{2}J\,dS}.
\]
In the following examples, we choose the parameter $\mu$ to be $1$
and the kernel $K$ to be constant. The numerical simulations are
carried out on the grid nodes in $T_{\epsilon}\cap h\mathbb{Z}^{3}$
with $\epsilon=4h$, and $h$ is chosen to be $\frac{1}{25},\frac{1}{50}.$ 

We first choose $p=3$, $f=1$. In this case, the minimizer is $u=\frac{1}{c}$
with $c=\sqrt{\frac{84}{100}}\pi$, the square root of surface area
of $\Gamma$ and the minimal energy $I_{\Gamma}$ is $-c$. We use
two different initial conditions $u_{1}(\theta,\phi)=\frac{2}{c}\sin\theta\cos\phi$
and $u_{2}(\theta,\phi)=\frac{2}{c}\cos\theta\sin\phi$, $\theta,\phi\in[0,2\pi)$
on $\Gamma$ and extend them constant-along-normal to get initial
condition for extended equations. For time discretization, we use
forward Euler with $\Delta t=\frac{h^{2}}{10}$ to compute the solution
until $\|v(t+\Delta t)-v(t)\|_{T_{\epsilon},\infty}<10^{-7}.$ The
total energy error $J_{T_{\epsilon}}(v(t))+c$ and $L_{2}-$norm error
$|\frac{1}{2\epsilon}\int_{T_{\epsilon}}v^{2}(t)J\,dS-1|$ are shown
in Figure \ref{fig:p-Laplacian f1}. We can see the total energy converges
to the exact energy value $-c$ for both choices of initial conditions.
From the right subfigure in Figure \ref{fig:p-Laplacian f1}, we see
that the $L_{2}-$norm is almost conserved by the proposed method
and converges to $0$ as $h$ tends to $0$. 

For a nontrivial numerical example, we choose $p=3$, and $f=\cos(\theta+\phi)\sin(\phi)$.
For this problem, we do not have analytical form for the minimizer.
The total energy $J_{T_{\epsilon}}(v(t))$ and $L_{2}-$norm error
$|\frac{1}{2\epsilon}\int_{T_{\epsilon}}v^{2}(t)J\,dS-1|$ are shown
in Figure \ref{fig:p-Laplacian f2}. Again, we can see energy decays
to some steady state for both initial conditions and the $L_{2}-$norm
is almost conserved by the proposed method. 

\begin{figure}
\begin{centering}
\includegraphics{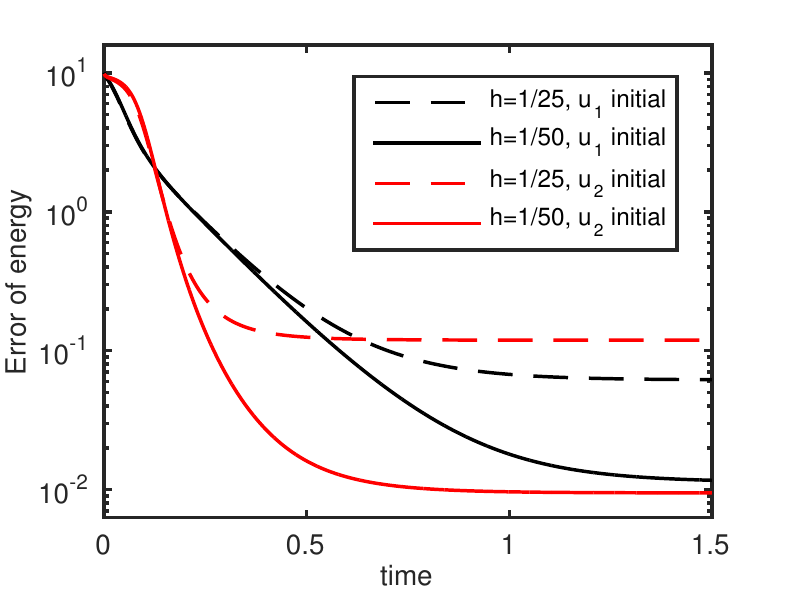}\includegraphics{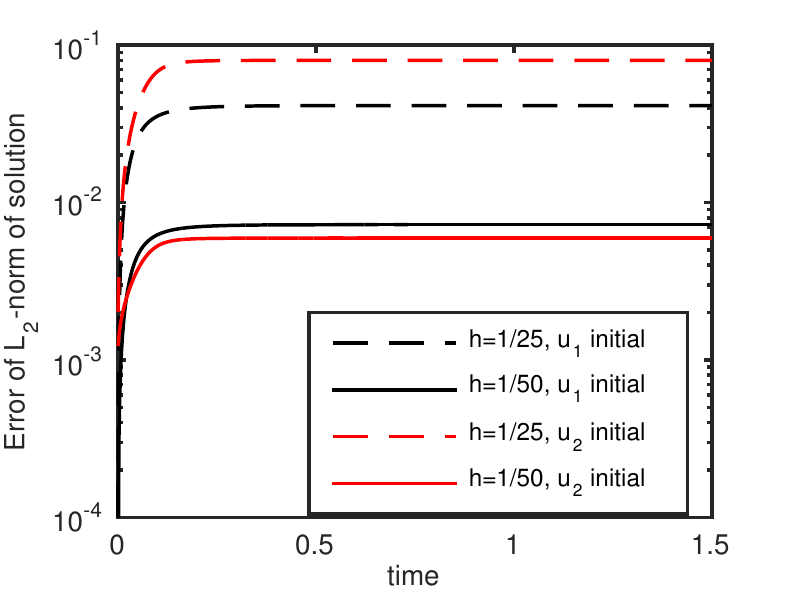}
\par\end{centering}
\caption{(Left) The total energy error $J_{T_{\epsilon}}[v]+c.$ (Right) The
error of $|\|v\|_{2,\Gamma}-1|$. We use $h=\frac{1}{25}$ and $h=\frac{1}{50}$
with two different initial condition $u_{1}$ and $u_{2}$. The dashed
and solid curves are for $h=\frac{1}{25}$ and $h=\frac{1}{50}$ respectively.
The black and red curves are using initial condition $u_{1}$ and
$u_{2}$ respectively.\label{fig:p-Laplacian f1}}
\end{figure}

\begin{figure}
\begin{centering}
\includegraphics{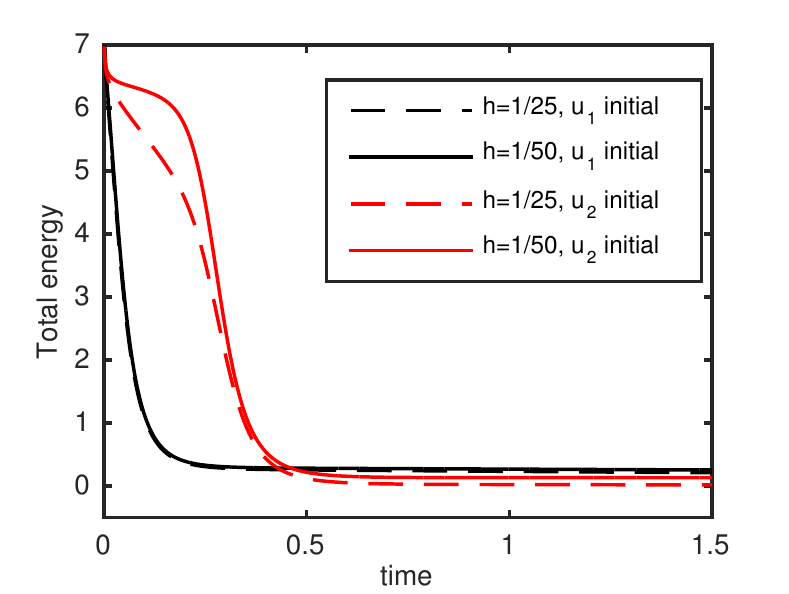}\includegraphics{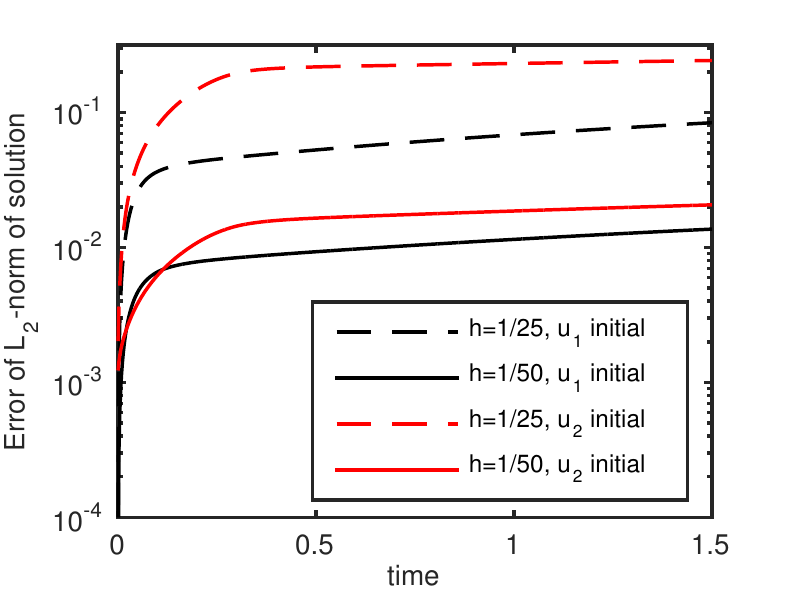}
\par\end{centering}
\caption{(Left) The total energy $J_{T_{\epsilon}}[v].$ (Right) The error
of $|\|v\|_{2,\Gamma}-1|$. We use $h=\frac{1}{25}$ and $h=\frac{1}{50}$
with two different initial condition $u_{1}$ and $u_{2}$. The dashed
and solid curves are for $h=\frac{1}{25}$ and $h=\frac{1}{50}$ respectively.
The black and red curves are using initial condition $u_{1}$ and
$u_{2}$ respectively.\label{fig:p-Laplacian f2}}
\end{figure}

\section{Least action principles\label{sec:Least-action-principles}}

We shall consider the model least action principle defined by 
\[
I_{\Gamma}[u]:=\int_{t_{1}}^{t_{2}}\int_{\Gamma}\left[\left(\frac{\partial u}{\partial t}\right)^{2}-|\nabla_{\Gamma}u|^{2}\right]dS(x)\,dt,
\]
which consists of the difference between the kinetic energy and the
potential energy. Taking the variational derivative, the resulting
Euler-Lagrange equation is an analogy of the usual second order wave
equation, but defined on $\Gamma$. 

Correspondingly, we shall study the extensions of $I_{\Gamma}$ in
the following form
\begin{equation}
\tilde{I}_{T_{\epsilon}}[v]:=\int_{t_{1}}^{t_{2}}\int_{T_{\epsilon}}\left[\left(\frac{\partial v}{\partial t}\right)^{2}-|A(z;\mu)\nabla u|^{2}\right]K_{\epsilon}(d_{\Gamma}(z))J(z)dz\,dt.\label{eq:saddle_point_energy}
\end{equation}
Recall that $A(z;\mu):=\left(\sigma_{1}^{-1}\mathbf{t}_{1}\otimes\mathbf{t}_{1}+\sigma_{2}^{-1}\mathbf{t}_{2}\otimes\mathbf{t}_{2}+\mu\mathbf{n}\otimes\mathbf{n}\right).$
Hence the essential questions are about the well-posedness of the
extended problem, with or without the additional component $(\frac{\partial u}{\partial n})^{2}$
in the potential energy of the system, and stability of the constant-along-normal
solutions. 

  {Notice that unlike elliptic equations, the zero Neumann
boundary condition is not the natural boundary condition for the variational
problem involving energy $\tilde{I}_{T_{\epsilon}}$ defined in \eqref{eq:saddle_point_energy}.
The natural boundary condition is the radiation boundary conditions
(add citation). However, the constant-along-normal solutions usually
do not satisfy the radiation boundary conditions.}

\subsection{Study of a model problem}

Consider $\Gamma$ to be the unit interval on the $x$-axis with periodic
boundary conditions at $x=0$ and $1$, and $T_{\epsilon}=[0,1)\times(-\epsilon,\epsilon).$
Thus the Euler-Lagrange equation of $\tilde{I}_{T_{\epsilon}}$ satisfies
\begin{equation}
v_{tt}=v_{xx}+\mu v_{yy},\,\,\,(\mu\geq0)\label{eq:model-wave}
\end{equation}
 We look for periodic solutions 
\begin{equation}
v(x+1,y,t)=v(x,y,t)\label{eq:wave-periodic-BC}
\end{equation}
 satisfying the \emph{constant-along-normal} initial conditions 
\begin{equation}
\begin{cases}
v(x,y,0)=v_{0}(x),\\
v_{t}(x,y,0)=v_{t,0}(x),
\end{cases}\label{eq:wave-ICs}
\end{equation}
and the Neumann boundary condition 
\begin{equation}
\frac{\partial v}{\partial y}(x,\pm\epsilon,t)=0.\label{eq:wave-Neumann}
\end{equation}
For $\mu=0,$ we have a family of one-dimensional wave equation, coupled
only through the initial conditions. It is obvious that for $\mu\ge0,$
the initial-boundary-value problem \eqref{eq:model-wave}-\eqref{eq:wave-Neumann}
is well-posed and stable. Furthermore, it is easy to show that the
solutions starting with the initial conditions \eqref{eq:wave-ICs}
will stay \emph{constant-along-normal}, i.e. $v_{y}(x,y,t)=0$ for
all time. 

  {In this case, the natural boundary condition is $v_{t}\pm v_{y}=0$
on $y=\pm\epsilon$. For this model problem, the constant-along-normal
solutions are superposition of plane waves of the form $e^{i(kx\pm kt)}$.
Obviously, the planes wave solutions do not satisfy the natural boundary
conditions unless they are constants.}

In the following, we shall look into the effect of perturbations that
might be introduced by discretization. 

\subsubsection{Perturbation in the initial conditions}

We consider the solutions of the form
\[
u(x,y,t)=f(x+t)+g(x-t)+C(y,t),\,\,\,C(y,0)=C_{t}(y,0)=0.
\]
For $\mu\ge0$, $C(y,t)$ can only be zero due to the special form
of the initial conditions. Now, suppose that the initial condition
is not exactly constant along the normals of $\Gamma$, the equation
with $\mu=0$ will admit solutions that grow linearly in time. 

To see this, consider $\frac{\partial v}{\partial y}(x,y,0)=0$ and
\[
\frac{\partial}{\partial y}v_{t}(x,y,0)=-\delta\sin\frac{\pi y}{\epsilon}\,\,\,\text{for some }\delta\in\mathbb{R}.
\]
This means that the initial condition of $v_{t}$ has a sinusoidal
perturbation satisfying the Neumann boundary condition. Then $C(y,t)=\frac{\delta\epsilon}{\pi}\cos\frac{\pi y}{\epsilon}\cdot t.$
Computationally, if both $\delta$ and $\epsilon$ are of order $h$,
with $h$ the grid spacing, this initial perturbation will introduce
only $O(t\cdot h^{2})$ difference in the solutions. 

With $\mu>0,$ the perturbation in the initial conditions will result
in modes that propagate towards the boundaries and then be reflected
back into the domain. Nevertheless, since the total energy is conserved,
we expect that such perturbation will stay controlled. 

\subsubsection{Perturbation in the boundary conditions}

In general, numerical discretization of the Neumann boundary condition
\eqref{eq:wave-Neumann} will inevitably introduce some tangential
perturbations that could be modeled by: 
\begin{align}
\frac{\partial v}{\partial y}(x,\epsilon,t) & =\alpha\frac{\partial^{k}v}{\partial x^{k}}(x,\epsilon,t),\,\,\,\alpha\in\mathbb{R},k\ge1,\label{eq:perturbed-Neumann-BC}
\end{align}
and
\begin{equation}
\frac{\partial v}{\partial y}(x,-\epsilon,t)=0.\label{eq:zero-Neumann-at-y=00003Depsilon}
\end{equation}
Consider the following two 4th order boundary closures for the grid
nodes $(ih,(N+1)h)$:

\[
u_{i,N+1}:=\frac{2}{3}(u_{i-1,N-1}+u_{i+1,N-1})-\frac{1}{6}(u_{i-2,N-1}+u_{i+2,N-1}),
\]
 leading to 
\[
v_{y}\approx\frac{1}{6}h^{4}u_{xxxx},
\]
 and
\[
u_{i,N+1}:=\frac{1}{4}u_{i-1,N-1}+\frac{3}{2}u_{i+1,N-1}-u_{i+2,N-1}+\frac{1}{4}u_{i+3,N-1},
\]
 leading to a perturbation to the zero Neumann condition with the
leading order term of a different sign:
\[
v_{y}\approx-\frac{1}{4}h^{4}u_{xxxx}.
\]

A natural question is whether such perturbation will render the resulting
initial-boundary-value problem ill-posed? We follow closely the theory
developed in \cite{gustafsson-kreiss-oliger}, and particularly in
\cite{kreiss-petersson-ystrom2004}. We use Laplace transform in time
and Fourier transform in the $x$-variable to study the well-posedness
of the perturbed problem \eqref{eq:model-wave}-\eqref{eq:wave-ICs}
and \eqref{eq:perturbed-Neumann-BC}-\eqref{eq:zero-Neumann-at-y=00003Depsilon}.
Due to linearity of the problem, we may analyze by a single mode:
\begin{equation}
\hat{v}(\omega,y,s)=e^{st}e^{i\omega x}\phi(y).\label{eq:laplace-fourier-mode}
\end{equation}
Suppose that there exists $s_{0}\in\mathbb{C}$ with $\text{Re }s_{0}>0$
and $\omega_{0}\in\mathbb{Z}$ such that $\hat{v}$ solves \eqref{eq:model-wave}-\eqref{eq:wave-ICs}
and \eqref{eq:perturbed-Neumann-BC}-\eqref{eq:zero-Neumann-at-y=00003Depsilon}. 

We first remark that even with constant-along-normal initial conditions
\eqref{eq:wave-ICs}, $v_{y}$ will not remain $0$ in time, due to
the perturbed boundary condition \eqref{eq:perturbed-Neumann-BC},
unless $v$ is a constant. Therefore, we cannot just consider $\phi(y)$
being a constant. Plugging \eqref{eq:laplace-fourier-mode} into the
equation, we obtain
\[
s^{2}\phi=-\omega^{2}\phi+\mu\phi_{yy}.
\]
Without loss of generality we analyze $\mu=1$ for the case $\mu>0$.
We have the eigenvalue problem 
\begin{equation}
\phi_{yy}=(s^{2}+\omega^{2})\phi,\label{eq:eigen-value problem}
\end{equation}
with the boundary conditions 
\begin{equation}
\phi_{y}(\epsilon)=\alpha(i\omega)^{k}\phi(\epsilon),\label{eq:fourier-transform of vy=00003Davx}
\end{equation}
and
\[
\phi_{y}(-\epsilon)=0.
\]
\[
\phi(y)=c_{1}e^{\kappa y}+c_{2}e^{-\kappa y},\,\,\,\kappa:=(s^{2}+\omega^{2})^{\frac{1}{2}},
\]
where the square root is taken on the complex plane with the branch
$-\pi<\arg z\le\pi.$ Solutions with $\kappa=0$ are stable, so we
will not discuss such a case. If $\kappa\neq0$, from the boundary
conditions, we have 
\[
c_{1}=c_{2}e^{2\kappa\epsilon},
\]
and 
\begin{equation}
(\kappa-\alpha(i\omega)^{k})e^{4\kappa\epsilon}=(\kappa+\alpha(i\omega)^{k}).\label{eq:perturbed-constraint-on-k-omega}
\end{equation}

\paragraph*{$k$ is an even positive integer.}

In this case, \eqref{eq:perturbed-constraint-on-k-omega} is equivalent
to $\kappa\tanh(2\kappa\epsilon)=\tilde{\alpha}\omega^{k}$, where
$\tilde{\alpha}=\alpha i^{k}$. This equation admits only pure imaginary
solution $\kappa$ if $\tilde{\alpha}\leq0$; and it has at least
one real solution $\kappa$ if $\tilde{\alpha}>0$. Thus we conclude
that $s=(\kappa^{2}-\omega^{2})^{\frac{1}{2}}$ is a pure imaginary
number if $\tilde{\alpha}<0$, and in this case the equation is stable.
On the other hand, we have $s=(\kappa^{2}-\omega^{2})^{\frac{1}{2}}>0$
if $\tilde{\alpha}>0$. Therefore the equation is unstable when $\tilde{\alpha}>0$.

\paragraph{$k$ is an odd positive integer. }

In this case, \eqref{eq:perturbed-constraint-on-k-omega} is equivalent
to 
\begin{equation}
\frac{\tau+bi}{\tau-bi}=e^{\tau},\label{eq:stability equation}
\end{equation}
where $\tau=4\epsilon\kappa,b=4\epsilon\alpha i^{k-1}\omega^{k}$.
Suppose $\tau=u+vi$, then by equating the modulus in both sides of
\eqref{eq:stability equation} we have

\[
\frac{u^{2}+(v+b)^{2}}{u^{2}+(v-b)^{2}}=e^{2u}.
\]
Hence we have $v=f(u)=\frac{b(e^{2u}+1)\pm\sqrt{4b^{2}e^{2u}-u^{2}(e^{2u}-1)^{2}}}{e^{2u}-1}$
and $v$ is real if $u\sinh u\leq|b|$. Notice that $\omega=2n\pi$
can be arbitrary large so can $|b|$. To show \eqref{eq:stability equation}
admits a solution, we check that

\[
\frac{u+(v+b)i}{u+(v-b)i}=e^{2u}e^{vi}\iff\frac{u+(f(u)+b)i}{u+(f(u)-b)i}=e^{2u}e^{f(u)i}.
\]
We only need to show there exists $u$ such that the arguments of
both sides in the above equation are equal. With out loss of generality,
we assume $b>0$. Let $u^{*}>0$ satisfies $u^{*}\sinh u^{*}=b$.
Then $f(u^{*})=u^{*}\cosh(u^{*})$ and $arg(\frac{u^{*}+(f(u^{*})+b)i}{u^{*}+(f(u^{*})-b)i})\rightarrow arg(e^{u^{*}}i)=\frac{\pi}{2}$
as $|b|\rightarrow\infty.$ Therefore for large enough $b$ and $u$
close to $u^{*}$ enough, $arg(\frac{u+(f(u)+b)i}{u+(f(u)-b)i})\simeq\frac{\pi}{2}$.
Since $f(0)=0$ and $f(u)$ is increasing, by continuity of $f(u)$,
for large enough $\omega,$ there always exists $\tau=u+f(u)i$ that
satisfies \eqref{eq:stability equation}. Therefore $s=\sqrt{\frac{\tau^{2}}{16\epsilon^{2}}-\omega^{2}}$
with positive real part satisfies the eigenvalue problem. This means
that the perturbed boundary condition admits solutions that exponentially
increase in time and have boundary layers of near $\partial T_{\epsilon}$. 

\subsubsection{General positive values of $\mu$. }

$s$ is replaced by $s/\sqrt{\mu}$ in the analysis above, and the
unstable modes become
\[
\hat{v}(x,y,t)\sim e^{\mu st}e^{i\omega x}(c_{1}e^{\mu\kappa y}+c_{2}e^{-\mu\kappa y}).
\]
 We see that with small values of $\mu$, the instability can be controlled. 

For $\mu=0,$ we have 
\[
(s^{2}+\omega^{2})\phi(y)=0.
\]
 Non-trivial solutions requires that $s=\pm i\omega$. We then look
for $\phi(y)$ which satisfies 
\[
\phi_{y}(\epsilon)=\alpha(i\omega)^{k}\phi(\epsilon),\,\,\,\phi_{y}(-\epsilon)=0.
\]
 The set of $C^{2}$ functions which satisfy the two conditions are
not bounded. Thus we have to estimate the solution from the initial
conditions. 

\begin{figure}
\begin{centering}
\includegraphics{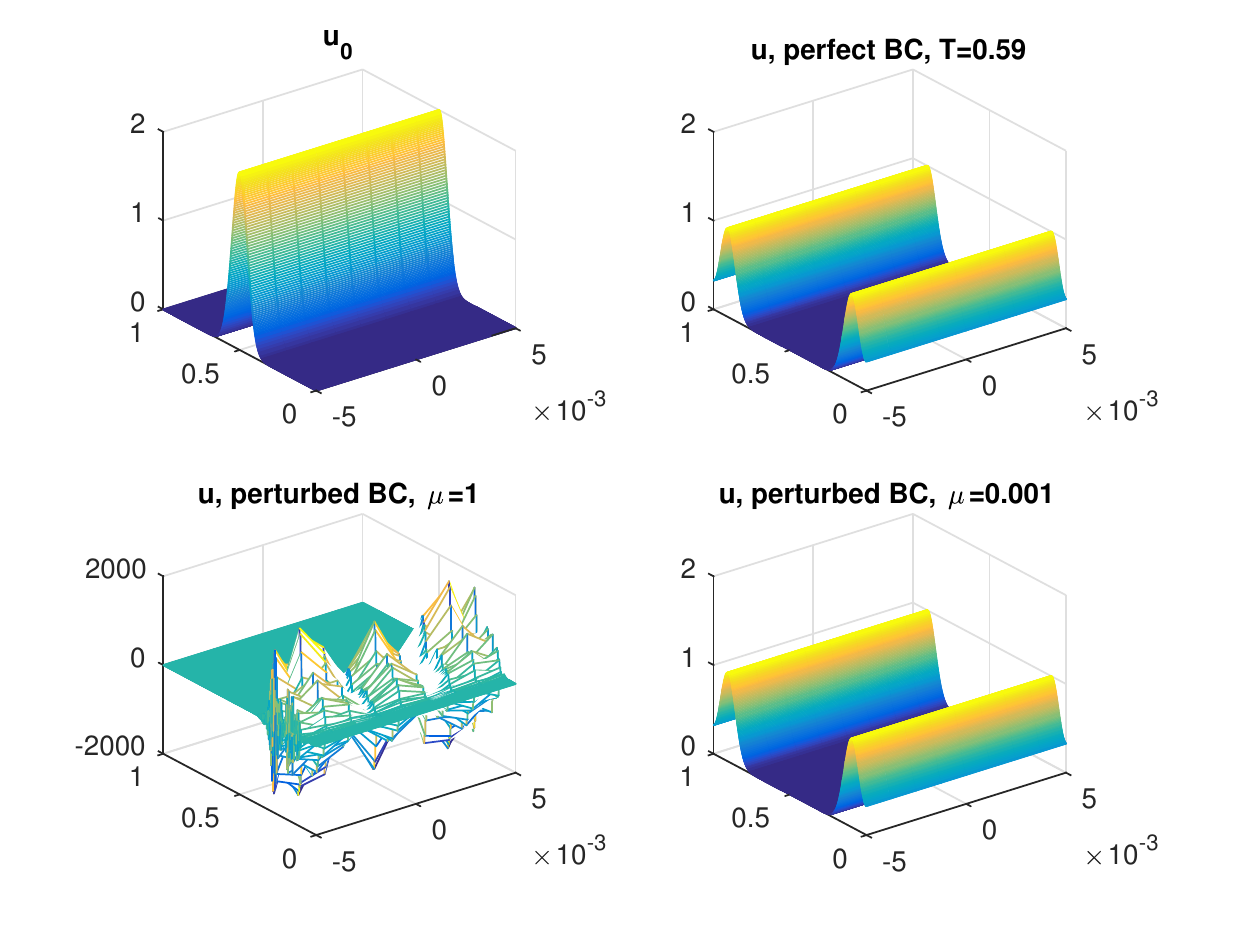}
\par\end{centering}
\caption{\label{fig:Unstable-wave-solutions}Unstable wave solutions by a 3rd
order boundary closure, and stabilization by using smaller $\mu$.}
\end{figure}

Figure~\ref{fig:Unstable-wave-solutions} demonstrates the instability
analyzed above as well as the simple stabilization by using a small
$\mu.$ Figure~\ref{fig:Unstable-wave-solutions-4th-order-BCs} demonstrates
a numerical simulation using these two boundary closures with different
values of $\mu$. 

\begin{figure}
\begin{centering}
\includegraphics{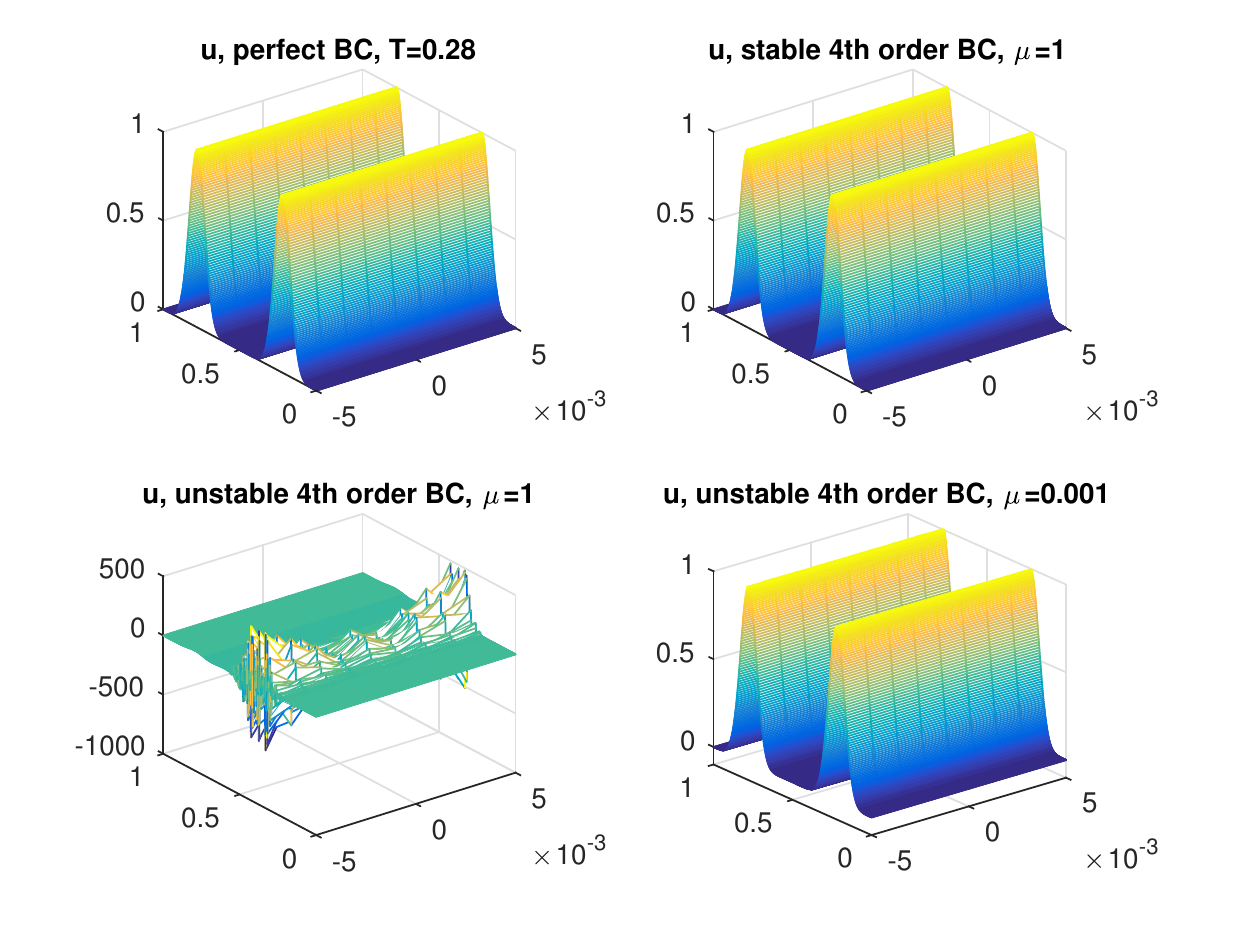}
\par\end{centering}
\caption{\label{fig:Unstable-wave-solutions-4th-order-BCs} Wave solutions
with stable and unstable 4th order boundary closure, and stabilization
by using smaller $\mu$.}
\end{figure}

\subsection{Perturbed boundary condition and stability}

The mode analysis above shows that the instability exists in the normal
derivatives of the solutions. We may explain the onset of instability
as follows: waves are being bounced back and forth in between the
two boundaries of $T_{\epsilon}$, and each time reflection takes
place, the solution may loss some regularity, and after a few reflections,
the accumulated instability dominates the computed system. In fact,
the loss of regularity can be read off from the perturbed boundary
conditions: the normal derivative of the reflected solution is set
to the derivative of the part of the solution that caused the reflection.
We can also understand at a heuristic level that without dissipation,
such instability is hard to avoid for more general domains with curved
boundaries (where as for dissipative systems, such problem is less
prone to happen). 

We also see this mechanism directly from the model equation and conditions
satisfied by $w=v_{y}$:{\small{} }
\begin{equation}
w_{tt}=w_{xx}+\mu w_{yy},\,\,\,(\mu\geq0),\label{eq:model-wave-1}
\end{equation}
satisfying the periodic condition in $x$:
\begin{equation}
w(x+1,y,t)=w(x,y,t),\label{eq:wave-periodic-BC-1}
\end{equation}
 the Neumann boundary conditions 
\begin{align}
w(x,\epsilon,t) & =\alpha\frac{\partial^{k}v}{\partial x^{k}}(x,\epsilon,t),\label{eq:wave-Neumann-1}\\
w(x,-\epsilon,t) & =0,
\end{align}
and the \emph{constant-along-normal} initial conditions for $v$:
\begin{equation}
\begin{cases}
w(x,y,0)=0,\\
w_{t}(x,y,0)=0.
\end{cases}\label{eq:wave-ICs-1}
\end{equation}
We first see that by reducing the size of $\mu$, we delay the propagation
of the boundary perturbation into the domain. Due to the special form
of the initial conditions, we expect that the instability reflected
in the discretized system comes out after constant multiply of discrete
time steps. 

For general curved boundaries, we cannot rely on diminishing the size
of $\mu$ to prolong the onset of the instability. Suppose that we
set $\mu=0$ in the equation for $v$, and analytically, with the
constant-along-normal initial data, the solution will constants of
wave that propagates only tangentially to $\Gamma_{\eta}.$ However,
in the discretized system, a wave that is tangential to the boundary
at a grid node at time $t^{n}$ will not be exactly tangential when
it arrives at a neighboring grid node at a later time. And there,
a reflection of this wave will take place. The following numerical
simulations verify the discussion above. 
\begin{example}
\label{exp: Stability-against-perturbation}(Stability against perturbation
in the initial conditions) Let $\Gamma$ be the ellipse with major
axis equal to 4 and minor axis equal to 2 in $\mathbb{R}^{2}$ as
described in Section \ref{subsec:Comparison-of-decaying}. We consider
the wave equation on $\Gamma:$ $u_{tt}=u_{ss}$, where $s$ is the
arc length of $\text{\ensuremath{\Gamma}}$ and the exact solution
is given by $u(s,t)=\sin(\frac{2\pi(s-t)}{L})$. The extended wave
equation on $T_{\epsilon}$ is
\end{example}
\begin{equation}
v_{tt}=\sigma^{-1}\nabla(A(\sigma;\mu)\nabla v),\label{eq:wave_ellipse}
\end{equation}
where $A(\sigma;\mu)=\sigma^{-1}{\bf t}\otimes{\bf t}+\mu{\bf n}\otimes{\bf n}$.
We use leapfrog scheme in time and second order central difference
scheme in space to discretize \eqref{eq:wave_ellipse}. See Appendix
for detail of discretization. The boundary nodes are interpolated
by bi-cubic interpolating polynomials. To see the effect of being
constant-along-normal or not, we perturb the initial condition in
the numerical simulation. The initial condition is given by
\[
v(s,\eta,0)=(C+(1-C)\cos\frac{\pi\eta}{2\epsilon})\sin(\frac{2\pi s}{L}),v(s,\eta,\Delta t)=(C+(1-C)\cos\frac{\pi\eta}{2\epsilon})\sin(\frac{2\pi(s-\Delta t)}{L}).
\]
The parameter $C$ is to control the intensity of the perturbed initial
condition away from being constant-along-normal. When $C=1$, the
initial data is constant-along-normal; The smaller $C$ the bigger
variance along the normal direction. We use $h=\frac{1}{100},\frac{1}{200},\frac{1}{400}$,
$\epsilon=3h,6h,0.1$, $C=1,0.9,0.8$ and $\Delta t=\frac{h}{10}$
to test how fast the instability exhibits. We compute the solution
until time step $t_{blow}$ such that $\|v(t_{blow})\|_{T_{\epsilon},\infty}>1.5$
and use $t_{blow}$ as the indicator for instability. From Figure
\ref{fig:Blowup-time}, we see that it makes sense to use a smaller
$\mu$ so that the instability occurs later. However, having it too
small or zero will not be beneficial. Numerical evidence shows that
the best choice of $\mu$ is about size of $h$. We also notice that
the wider of bandwidth $\epsilon$ can delay the blowup time for some
cases but not efficiently. Being far away from constant-along-normal
produces the instability in a very short time. The smaller grid size
$h$ causes the smaller blowup time. Numerically it shows that the
blowup time is of order $O(h).$ In other words, the instability accumulates
in each iteration independent of grid size $h$. 

\begin{figure}
\begin{centering}
\includegraphics{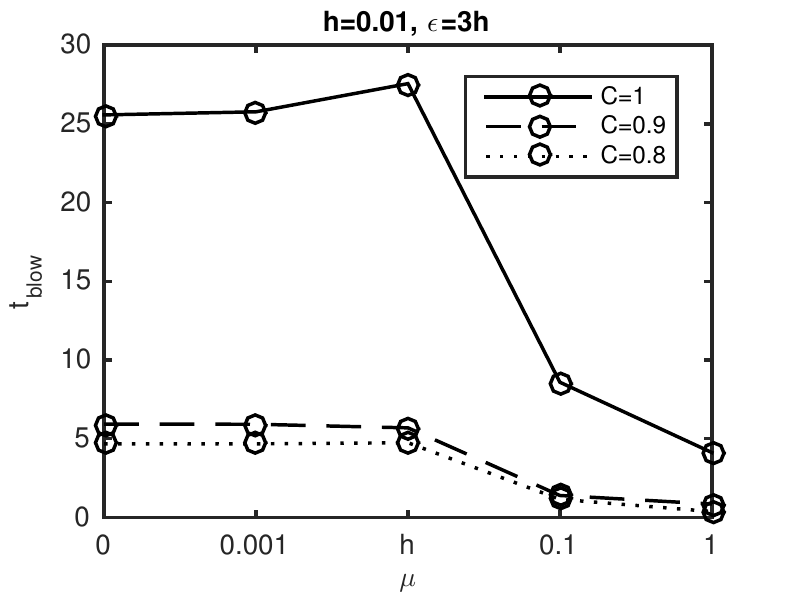}\includegraphics{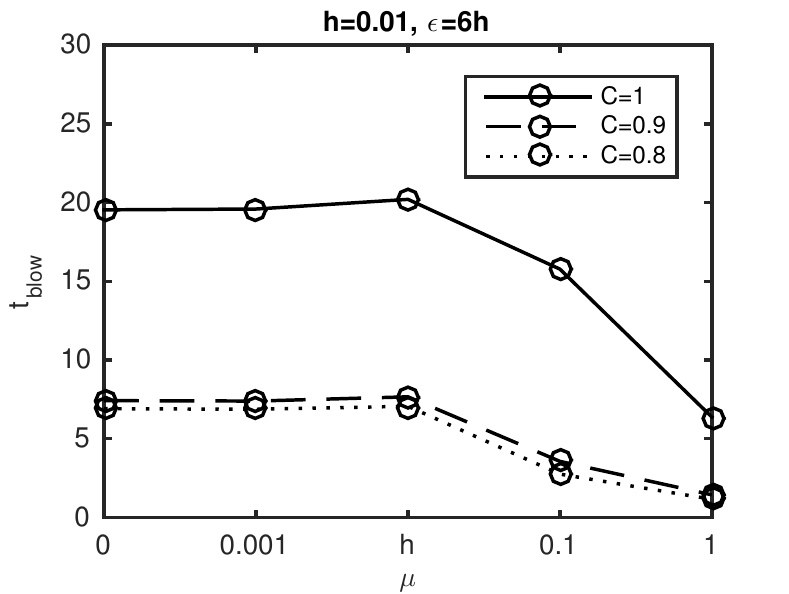}
\par\end{centering}
\begin{centering}
\includegraphics{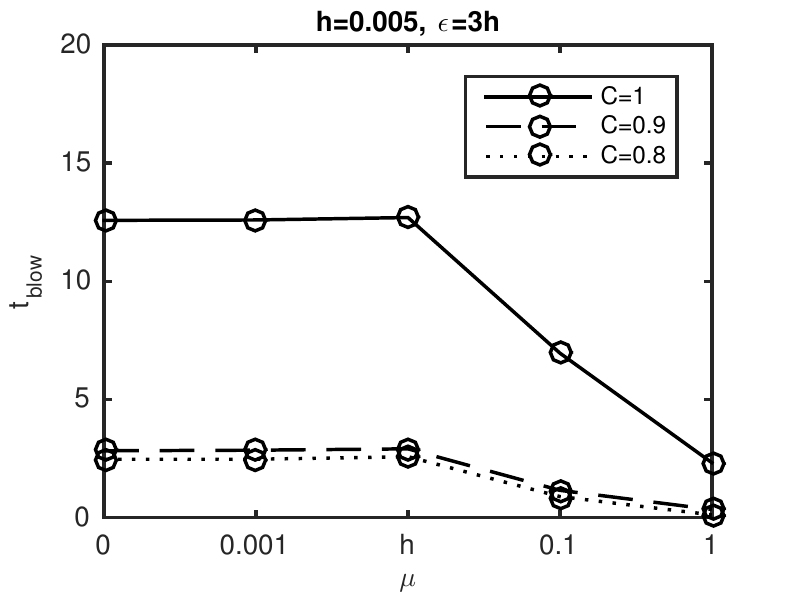}\includegraphics{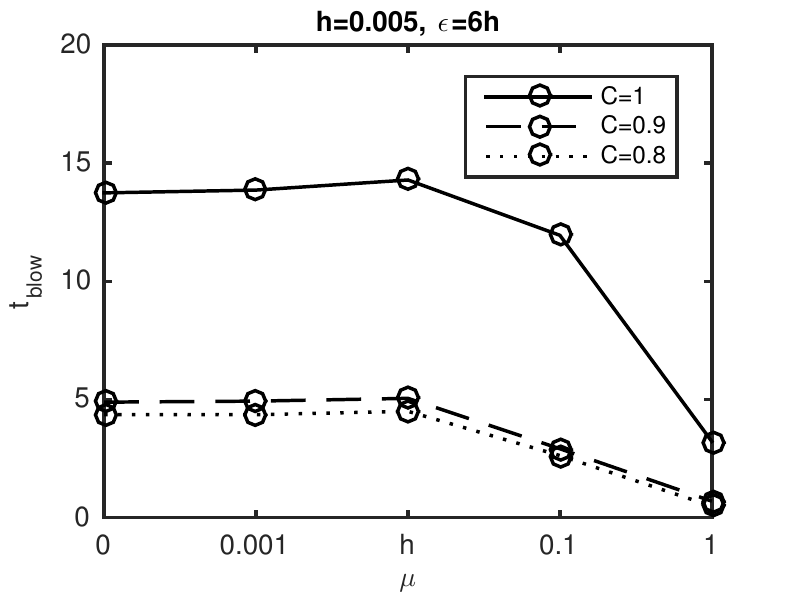}
\par\end{centering}
\begin{centering}
\includegraphics{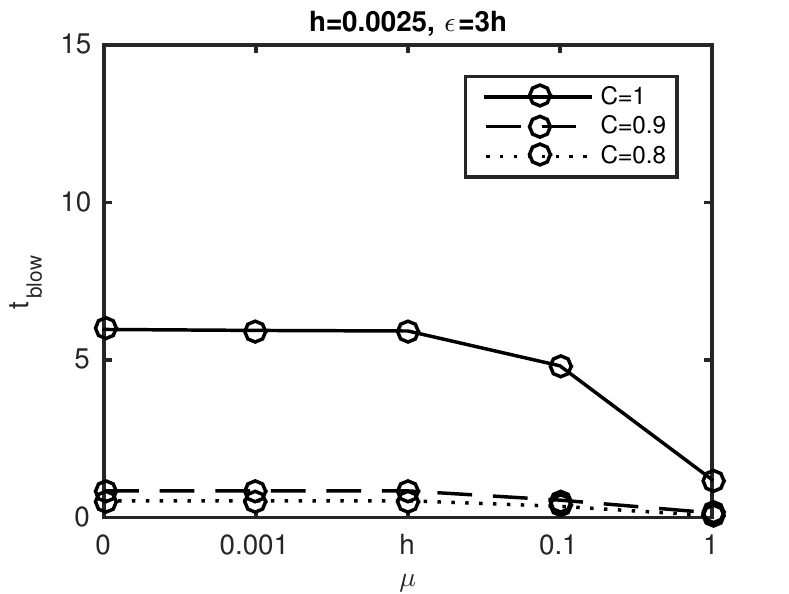}\includegraphics{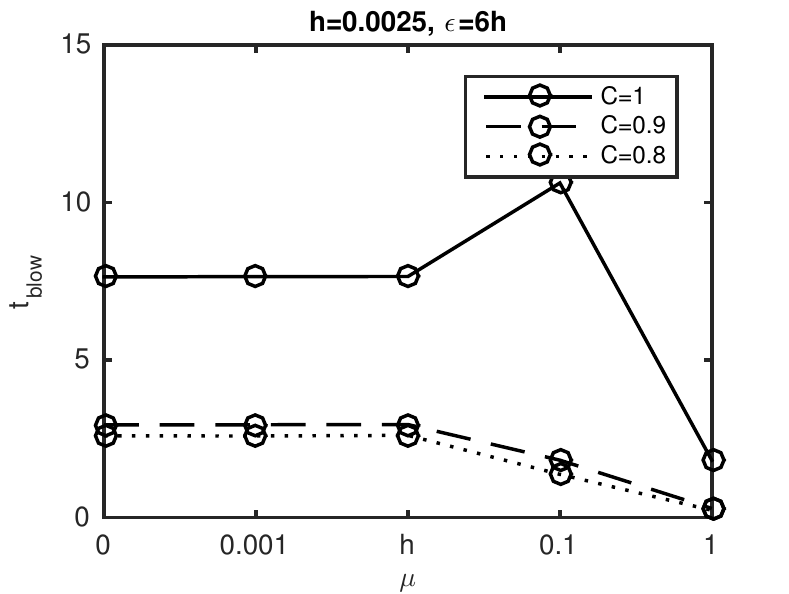}
\par\end{centering}
\caption{Blowup time $t_{blow}$ for different grid size $h$ and bandwidth
$\epsilon$. The larger perturbation in normal direction produces
the instability faster. The smaller $\mu$ can delay the onset of
instability. The blowup time $t_{blow}$ scales like $O(h)$.\label{fig:Blowup-time}}
\end{figure}

\begin{figure}
\begin{centering}
\includegraphics{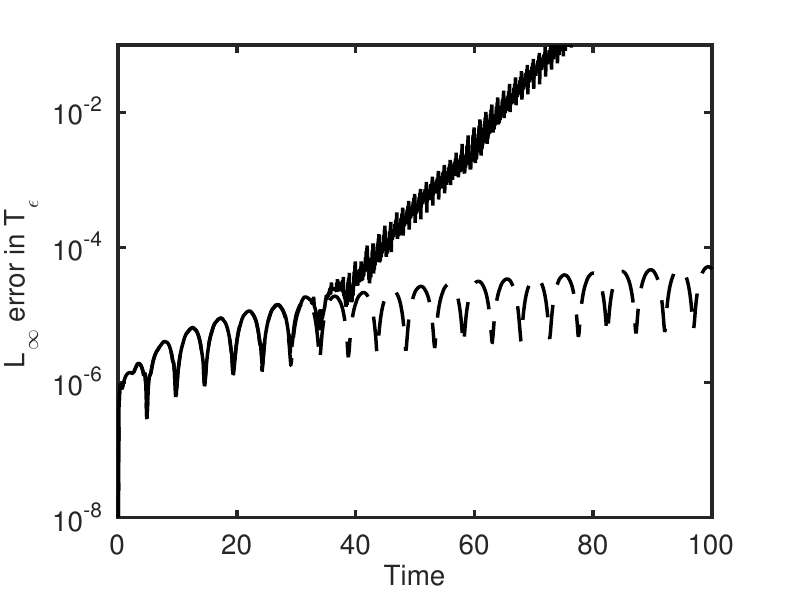}\includegraphics{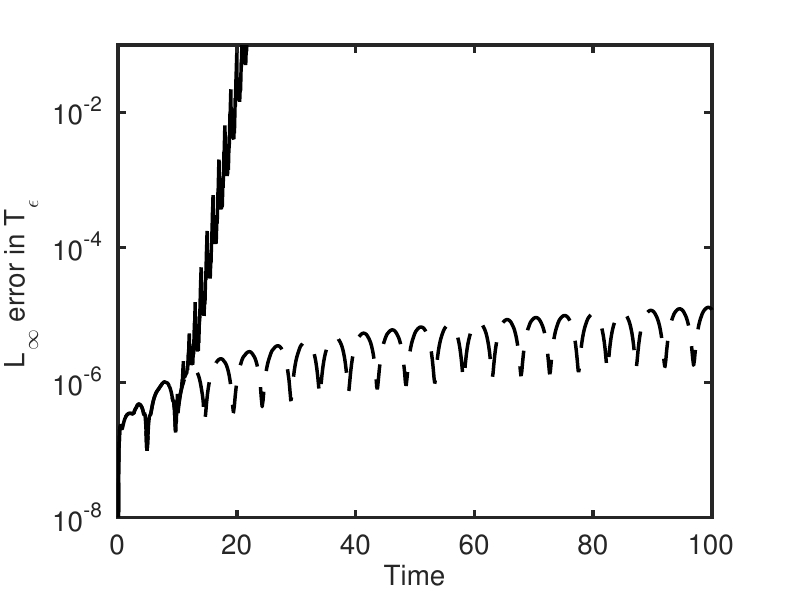}
\par\end{centering}
\caption{Reinitialization strategy to stabilize the wave equation. We use $h=1/200$
in the left figure and $h=1/400$ in the right figure. The dashed
and solid curves are obtained by reinitializing per 0.1 and 1 time
step.\label{fig:Reinitialization}}
\end{figure}

\subsection{Stabilization strategies and examples}

\subsubsection{Reinitialization}

For general initial data, the unstable modes will dominate the solution
also instantaneously. However, for constant-along-normal initial data
considered in our particular problems, particularly with smaller values
of $\mu$, the unstable modes seem to take longer time to become dominant. 

Therefore, we propose to stabilize the computations by \emph{reinitializing
the computed solutions }periodically. By reinitializing a function
$f_{0}$, we mean to create a new function $f$, which is the constant-along-normal
extension of $f_{0}|_{\Gamma}$. Such reinitialization can be done
easily by applying the boundary closure strategy to every inner node,
projecting them onto $\Gamma$. It can also be done easily by solving
the constant-extension PDE, used in the level set method, see e.g.
 \cite{LevelSet_OsherFedkiw,Redistance_ChengTsai}. In a ``Closest
Point Method'', this step is a mandatory part of every discrete time
step.

To demonstrate the strategy, we use the same setting as in Example
\ref{exp: Stability-against-perturbation}. We reinitialize the solution
per 0.1 and 1 time unit (or equivalently per $h^{-1}$ and $10h^{-1}$
discrete time steps). In this experiments, we test for $h=\frac{1}{200},\frac{1}{400}$
and compare the $L_{\infty}$-error $\|v(s,\eta,t)-\sin(\frac{2\pi(s-t)}{L})\|_{\infty,T_{\epsilon}}$
. The results are shown in Figure \ref{fig:Reinitialization}. The
solid curves are obtained by reinitializing per 0.1 time unit and
the dashed curve are obtained by reinitializing per 1 time unit. We
see that the solutions is stable for much longer time after reinitialization.
However, if the reinitialization is not frequently enough, the instability
accumulates after certain iterations and the solution is unstable.

\subsubsection{``Dissipative'' regularization}

Another way to stabilize the solution is to follow and adapt the idea
proposed in \cite{kreiss-petersson-ystrom2004}. Consider our model
problem in the strip: 
\[
v_{t}=v_{xx}+\mu v_{yy}-\alpha v_{txxxx},\:\,\forall(x,y)\in[0,1)\times(-\epsilon,\epsilon),\,\alpha>0,
\]
with the perturbed boundary condition

\[
v_{y}(x,-\epsilon,t)=0,\,v_{y}(x,\epsilon,t)=av_{xxxx}.
\]
In the same fashion as employed earlier in this section, it is proven
in Section 6 and Appendix B of \cite{kreiss-petersson-ystrom2004},
that with this regularization and a suitable $\alpha$, the resulting
initial boundary value problem is well-posed.

For general setups, the regularization will correspond to higher order
tangential derivatives of the solution, and is not very convenient
to discretize. Therefore, we propose to add to the equation for $v$
an isotropic version of this regularization that involve only the
similar partial derivatives along the coordinate directions. This
means, in two dimensions, we shall modify the PDE by 
\[
v_{tt}=\sigma^{-1}\nabla\cdot(\sigma^{-1}\nabla v)-\alpha h^{3}(v_{txxxx}+v_{tyyyy})
\]
and discretize it on Cartesian grids by
\[
\frac{v_{i,j}^{n+1}-2v_{i,j}^{n}+v_{i,j}^{n-1}}{\Delta t^{2}}=\mathcal{A}_{i,j}v_{ij}^{n}-\alpha h^{3}(D_{+}^{x}D_{-}^{x})^{2}\left(\frac{v_{i,j}^{n}-v_{i,j}^{n-1}}{\Delta t}\right)-\alpha h^{3}(D_{+}^{y}D_{-}^{y})^{2}\left(\frac{v_{i,j}^{n}-v_{i,j}^{n-1}}{\Delta t}\right),
\]
 where $\alpha\le\frac{c}{4}$, $c$ is the maximum wave speed and
$\mathcal{A}_{i,j}v_{i,j}^{n}$ is the discretization of $\sigma^{-1}\nabla\cdot(\sigma^{-1}\nabla v)$.

To demonstrate the effect of stabilization, we use the same setting
in Example \ref{exp: Stability-against-perturbation}. We add the
stabilization terms and use $\alpha=\frac{1}{5}$ with $h=\frac{1}{100},\frac{1}{200},\frac{1}{400}$
and $\Delta t=\frac{h}{10}$. We compare the $L_{\infty}$-error $\|v(s,\eta,t)-\sin(\frac{2\pi(s-t)}{L})\|_{\infty,T_{\epsilon}}$
for different $\mu$ and the results are shown in Figure \ref{fig:stable}.
Notice that, to compute $(D_{+}^{x}D_{-}^{x})^{2}$ and $(D_{+}^{y}D_{-}^{y})^{2}$
terms, we need more ghost nodes. We see the solution becomes stable
for very long time $T\ge60$. Moreover, the solution has the same
magnitude $L_{\infty}$-error as the solution without stabilization
terms in the short time. This means the stabilization does not compromise
the accuracy but regain the stability for $\mathcal{O}(1)$ time step.
Also the results show the importance to choose free parameter $\mu$
close to $\mathcal{O}(h)$. The solution has smaller error for smaller
$\mu$. 

\begin{figure}
\begin{centering}
\includegraphics{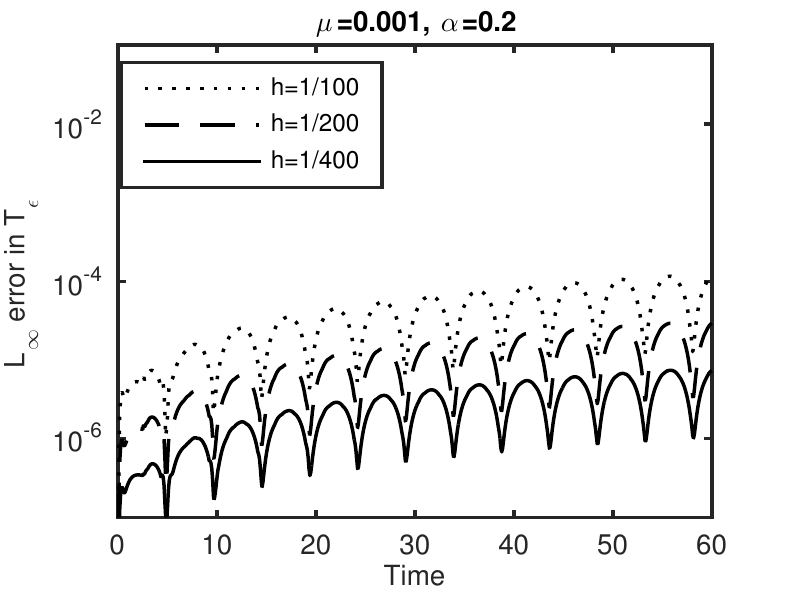}\includegraphics{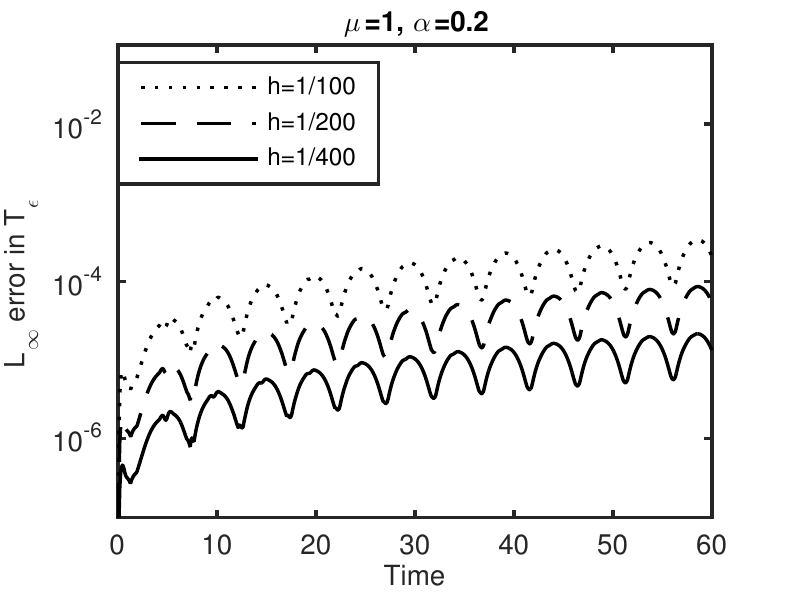}
\par\end{centering}
\begin{centering}
\includegraphics{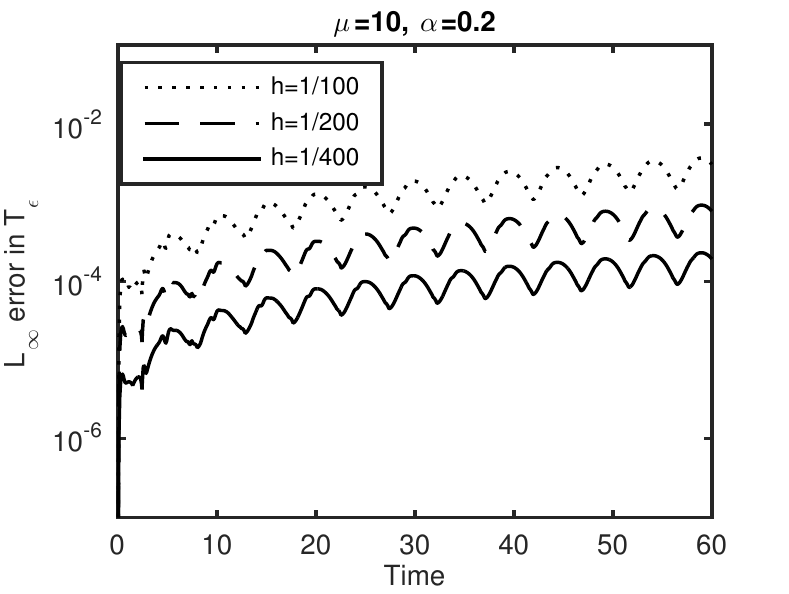}
\par\end{centering}
\caption{The $L_{\infty}$-error of solutions for wave equations with stabilization
terms.\label{fig:stable}}
\end{figure}

\section{Summary}

In this paper, we derive extensions of a class of integro-differential
operators defined on smooth closed manifolds. The extended operators
are defined for functions defined on thin narrowbands around the manifolds.
The main objective is to provide a formulation that allows for construction
of simple and accurate numerical algorithms that solve for the Euler-Lagrange
equations of the functionals defined by these integro-differential
operators, especially for the applications in which the manifolds
are defined by the distance function or closest point mapping to the
manifolds. 

What distinguishes this work from other existing level set or closest
point methods is that fact that our formulation solves the Euler-Lagrange
equations of the extended, volumetric integrals, with the corresponding
natural boundary condition on the boundaries of the narrowbands. We
investigate how the extensions can be made to guarantee a strict equivalence
between the solutions of the extended and the original problems. As
a result, eigenvalue problems involving the Laplace-Beltrami operators
can easily be computed. Together with our mathematical formulation,
we propose a simple boundary closure procedure that can be adapted
to a variety of numerical methods. We further study the stability
and well-posedness of a few model (initial)-boundary-value problems.
We discovered that hyperbolic problems require additional stabilization
to curb the effect of unstable mode originated from the approximation
of boundary conditions. We propose two strategies to stabilize the
numerical algorithms. One involves adding higher order ``dissipative''
terms to the PDEs, and the other one involves ``reinitializing''
the solutions of the PDEs periodically. The reinitialization is similar
to a required step in the closest point method. However, due to the
special form of the initial conditions and the equations, the stabilization
step is required only infrequently. 

We remark that the proposed formulation is not limited to implementation
using Cartesian grids. It is also convenient for design of finite
element methods on non-body fitted mesh. Furthermore, the proposed
formulation, which considers minimization of the variational principles
instead of tackling directly the Euler-Lagrange equations, allows
for the possibility of applying numerical optimization algorithms
to the discretized variational principles. A potential advantage of
such strategy include better preservation of invariances of the systems,
see for example the variational integrator \cite{Marsden-West:Acta-Numerica},
and the flexibility in dealing with nonlinear degenerate systems such
as the total variations of a function on surfaces \cite{chambolle-TV-direct-discretization}.
This direction will be pursued in a future work.

\section*{Acknowledgements}

Tsai thanks the National Center for Theoretical Sciences, Taiwan for
support of his visits, during which this work was initiated and completed.
Tsai was partially supported by NSF grants DMS-1318975 and DMS-1620473.
Chu was partially supported by MOST grants 105-2115-M-007 -004 and
106-2115-M-007 -002.

\appendix

\section{Appendix}

\subsection{Extension of surface gradient and surface divergence in $\mathcal{\mathbb{R}}^{3}$\label{subsec:Extension-of-surface}}

Let $\Omega\subset\mathbb{R}^{3}$ be a bounded open set with $C^{2}$
boundary $\Gamma=\partial\Omega$. For simplicity, we assume there
exists a signed-distance function $d:\mathbb{R}^{3}\rightarrow\mathbb{R}$
such that $\Gamma$ is the zero-level set of $d$. Without loss of
generality, we also assume that $d<0$ on the interior of $\Gamma$
and $d>0$ on the exterior. The normal vector ${\bf n}(x)=\nabla d(x)$
is the unit outer normal vector field of $\Gamma$ and the projection
$\Pi=I-{\bf n}\otimes{\bf n}$ which maps vectors in $\mathbb{R}^{3}$
onto the tangent space of $\Gamma$ at $x$. For any smooth function
$u$ defined on $\Gamma$, the tangent gradient of $u$ is defined
by

\[
\nabla_{\Gamma}u(x)=\Pi\nabla\tilde{{u}}(x),\quad\forall x\in\Gamma,
\]
where $\tilde{{u}}$ is any $C^{1}$ extension of $u(x)$ in a neighborhood
of $x$. For any vector field $F$ defined on $\Gamma$, the surface
divergence $\nabla_{\Gamma}\cdot F$ is defined analogously. Recall
that $\eta$-level set of $d$ is $\Gamma_{\eta}=\{x\in\mathbb{R}^{3}\,|\,d(x)=\eta\}$
and $T_{\epsilon}=\cup_{|\eta|<\epsilon}\Gamma_{\eta}$ is the narrowband
of $\Gamma$. The following theorem relates normal extension of the
surface gradient and divergence to the Eulerian gradient and divergence
of normal-extended function.
\begin{thm}
\label{thm:A1}Suppose $u$ is a smooth function defined on $\Gamma$.
Let $P_{\Gamma}$ denote the closest point mapping and \emph{$\overline{u}$
denote the constant-along-normal extension }of u in $T_{\epsilon}$.
Then for any $z\in T_{\epsilon}$ , we have 
\begin{equation}
\overline{\nabla_{\Gamma}u(z)}\equiv\nabla_{\Gamma}u\circ P_{\Gamma}(z)=A(z;\mu)\nabla\overline{u}(z),\label{eq:grad relation}
\end{equation}
where $A(z;\mu)=A_{0}(z)+\mu A_{1}(z),$
\begin{align*}
A_{0}(z):= & \sigma_{1}^{-1}\mathbf{t}_{1}\otimes\mathbf{t}_{1}+\sigma_{2}^{-1}\mathbf{t}_{2}\otimes\mathbf{t}_{2},\\
A_{1}(z):= & \mathbf{n}\otimes\mathbf{n},
\end{align*}
where $\mathbf{t}_{1}$, $\mathbf{t}_{2}$ are the two orthonormal
tangent vectors corresponding to the directions that yield the principle
curvatures of $\Gamma$, $\mathbf{n}$ is the unit normal vector of
$\Gamma$, $\sigma_{1}$ and $\sigma_{2}$ are two largest singular
values of $DP_{\Gamma}$ and $\mu$ is any real number.

Suppose $F$ is a smooth vector field defined on $\Gamma$. Let\emph{
$\overline{F}$ denote its constant-along-normal extension} in $T_{\epsilon}$.
Then for any $z\in T_{\epsilon}$ , we have 
\begin{equation}
\overline{\nabla_{\Gamma}\cdot F(z)}\equiv(\nabla_{\Gamma}\cdot F)\circ P_{\Gamma}(z)=\sigma_{1}^{-1}\sigma_{2}^{-1}\nabla\cdot(B(z;\mu)\overline{F}(z))=J^{-1}\nabla\cdot(B(z;\mu)\overline{F}(z)),\label{eq:div relation}
\end{equation}
where $B(z;\nu)=B_{0}(z)+\nu B_{1}(z),$
\begin{align*}
B_{0}(z):= & \sigma_{2}\mathbf{t}_{1}\otimes\mathbf{t}_{1}+\sigma_{1}\mathbf{t}_{2}\otimes\mathbf{t}_{2}=\sigma_{1}\sigma_{2}A_{0}(z)=JA_{0}(z),\\
B_{1}(z):= & \mathbf{n}\otimes\mathbf{n},
\end{align*}
where $\nu$ is any real number. In particular, if we choose $\nu=J\mu$,
then $B(z;\nu)=JA(z;\mu)$.
\end{thm}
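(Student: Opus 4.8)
The plan is to work in the tubular-neighborhood coordinates induced by $\Psi(x,\eta)=x+\eta\mathbf{n}(x)$ for $x\in\Gamma$, $|\eta|<\epsilon$, in which the closest point map becomes an explicit diagonal scaling. In this chart the gradient identity \eqref{eq:grad relation} follows from a one-line chain rule, while the divergence identity \eqref{eq:div relation} follows from a Piola-type computation on the parallel surfaces $\Gamma_\eta$. First I would record the geometry of the principal frame $\{\mathbf{t}_1,\mathbf{t}_2,\mathbf{n}\}$: differentiating $\Psi$ and using the Weingarten relation $\nabla_{\mathbf{t}_j}\mathbf{n}=\kappa_j\mathbf{t}_j$ shows that $D\Psi$ is diagonal with tangential eigenvalues $1+\eta\kappa_j^{\Gamma}=\sigma_j^{-1}$ and normal eigenvalue $1$. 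Hence $DP_\Gamma=(D\Psi)^{-1}$ restricted to the tangent plane equals $\sigma_1\mathbf{t}_1\otimes\mathbf{t}_1+\sigma_2\mathbf{t}_2\otimes\mathbf{t}_2$ and annihilates $\mathbf{n}$; its nonzero singular values are the $\sigma_j$, its tangential inverse is $A_0$, and $\det(DP_\Gamma|_{\mathrm{tan}})=\sigma_1\sigma_2=J$. Because $\Psi$ is a normal parametrization, the induced metric $G$ is block diagonal, $G=\mathrm{diag}(\hat g(\cdot,\eta),1)$ with $\hat g$ the metric of $\Gamma_\eta$ and $\sqrt{\det G}=\sqrt{\det\hat g}$. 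These are precisely the facts established in \cite{kublik2016integration}, which I would cite.

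For the gradient identity, since $\overline{u}=u\circ P_\Gamma$, the chain rule applied to a tangent direction $\mathbf{w}$ gives $\nabla\overline{u}(z)\cdot\mathbf{w}=\overline{\nabla_\Gamma u}(z)\cdot(DP_\Gamma(z)\mathbf{w})$, that is $\nabla\overline{u}=(DP_\Gamma)^{T}\overline{\nabla_\Gamma u}$ on the tangent plane, while $\nabla\overline{u}\cdot\mathbf{n}=0$ because $\overline{u}$ is constant along normals. As $DP_\Gamma$ is symmetric with tangential inverse $A_0$, inverting yields $\overline{\nabla_\Gamma u}=A_0\nabla\overline{u}$. Finally $A_1\nabla\overline{u}=\mathbf{n}(\mathbf{n}\cdot\nabla\overline{u})=0$, so the term $\mu A_1$ may be added freely and $\overline{\nabla_\Gamma u}=A(z;\mu)\nabla\overline{u}$ for every real $\mu$.

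For the divergence identity I would argue in three steps. Since $\overline{F}$ is tangent to each $\Gamma_\eta$ (tangent planes along a normal are parallel), so is $W:=B_0\overline{F}$, and $W$ has no $\partial_\eta$ component. Using $\sqrt{\det G}=\sqrt{\det\hat g}$ and the block-diagonal structure, the $\mathbb{R}^3$ divergence of such a purely tangential field reduces to the intrinsic divergence on the slice, $\nabla\cdot W=\nabla_{\Gamma_\eta}\cdot W$, since no $\eta$-derivative enters. Next, observe that $B_0\overline{F}=JA_0(F\circ P_\Gamma)=\det(DP_\Gamma)\,(DP_\Gamma)^{-1}(F\circ P_\Gamma)=\mathrm{adj}(DP_\Gamma)\,(F\circ P_\Gamma)$ is exactly the \emph{Piola transform} of $F$ under the diffeomorphism $P_\Gamma:\Gamma_\eta\to\Gamma$. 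The divergence-intertwining property of the Piola transform then gives
\[
\nabla_{\Gamma_\eta}\cdot(B_0\overline{F})=\det(DP_\Gamma)\,(\nabla_\Gamma\cdot F)\circ P_\Gamma=J\,\overline{\nabla_\Gamma\cdot F}.
\]
Combining the two displays and dividing by $J$ produces $\overline{\nabla_\Gamma\cdot F}=J^{-1}\nabla\cdot(B_0\overline{F})$. Since $B_1\overline{F}=\mathbf{n}(\mathbf{n}\cdot\overline{F})=0$, one may replace $B_0$ by $B(z;\nu)=B_0+\nu B_1$ for any $\nu$, giving \eqref{eq:div relation}; and as $B_1=A_1=\mathbf{n}\otimes\mathbf{n}$, the choice $\nu=J\mu$ yields $B=JA_0+J\mu A_1=JA(z;\mu)$, the final assertion. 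As a cross-check I would verify the concentric-sphere case, where $\sigma_j=R/(R+\eta)$, $B_0=\mathrm{diag}(\sigma_2,\sigma_1)$, and both sides reduce to $\tfrac1{R+\eta}$ times the angular divergence.

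The routine parts are the chain rule and the block-diagonal reduction of Step one. The main obstacle is the Piola intertwining identity on the curved surfaces $\Gamma_\eta$, i.e. verifying $\nabla_{\Gamma_\eta}\cdot(\mathrm{adj}(DP_\Gamma)\,F\circ P_\Gamma)=\det(DP_\Gamma)\,(\nabla_\Gamma\cdot F)\circ P_\Gamma$; this is the genuine cofactor computation, whose subtlety is that $B_0\overline{F}$ is \emph{not} itself constant along the normal (the $\sigma_j$ carry $\eta$-dependence), so one must check that the $\eta$-dependence conspires with $J$ to leave $J^{-1}\nabla\cdot(B_0\overline{F})$ constant along normals, matching the left-hand side. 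An equivalent but weaker route is to dualize the gradient identity: testing $\int_\Gamma(\nabla_\Gamma\cdot F)\phi\,dS=-\int_\Gamma F\cdot\nabla_\Gamma\phi\,dS$ against the layer formula \eqref{eq:integration-by-layers} and integrating by parts in $T_\epsilon$; there the boundary term and the $\nabla K_\epsilon$ term both drop because $A_0\overline{F}\cdot\mathbf{n}=0$, but this only yields the identity weakly against normal-constant test functions, so the coordinate computation is still needed to upgrade to the pointwise statement.
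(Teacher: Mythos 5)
Your proposal is correct, and its gradient half coincides with the paper's own proof: the paper also works in the normal coordinates $z=\Gamma(s_1,s_2)+\eta\mathbf{n}(s_1,s_2)$, computes $\partial z/\partial s_i=(1+\eta\kappa_i)\mathbf{t}_i$, and reads \eqref{eq:grad relation} off the orthogonal-curvilinear gradient formula together with $\mathbf{n}\cdot\nabla\overline{u}=0$; your chain-rule-plus-symmetric-inverse argument is the same computation in different dress. The divergence half is where you genuinely diverge. The paper stays in coordinates: it writes the Euclidean divergence via the curvilinear formula $\nabla\cdot W=\sigma_1\sigma_2\,[\,\partial_{s_1}(\sigma_2^{-1}W^1)+\partial_{s_2}(\sigma_1^{-1}W^2)+\partial_\eta(\sigma_1^{-1}\sigma_2^{-1}W^\eta)\,]$ with $W=B_0\overline{F}$, whereupon the weights cancel identically (since $(1+\eta\kappa_i)\sigma_i=1$) and \eqref{eq:div relation} drops out in one line. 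You instead decompose into (i) the reduction of the Euclidean divergence to the slice divergence $\nabla_{\Gamma_\eta}\cdot$ for fields tangent to every slice, and (ii) the identification of $B_0\overline{F}=\mathrm{adj}(DP_\Gamma)\,\overline{F}$ as the Piola transform of $F$ under $P_\Gamma:\Gamma_\eta\to\Gamma$, quoting its divergence-intertwining property as the key lemma. Your framing buys conceptual content that the paper's computation hides: it explains why the weights in $B_0$ appear swapped ($\sigma_2$ on $\mathbf{t}_1$, $\sigma_1$ on $\mathbf{t}_2$) \textemdash{} they form the adjugate, not the inverse, of $DP_\Gamma$ \textemdash{} and it transfers essentially verbatim to the codimension-two setting of Section 2.2. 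What it costs is that the surface-to-surface Piola identity, which you honestly flag as the main obstacle, is in these very coordinates exactly the paper's one-line cancellation (equivalently, the statement that pullback commutes with $d$ applied to the flux form $\iota_F\,\mathrm{vol}_\Gamma$); so one either cites it as standard or ends up reproducing the paper's computation, and nothing is saved in length. Two minor precision points: in step (i) the $\partial_\eta$ term dies because $\mathbf{n}\cdot W$ vanishes \emph{identically} in $T_\epsilon$, not merely at the point considered \textemdash{} which holds since tangent planes are parallel along each normal line and $B_0$ preserves tangency \textemdash{} and your convention $\sigma_j^{-1}=1+\eta\kappa_j^{\Gamma}$ is the one consistent with \eqref{eq:The-Jacobian} and with the theorem as stated; reassuringly so, since the paper's own proof contains a reciprocal slip at the step ``$(1+d(z)\kappa_{i}(P_{\Gamma}z))=(1-d(z)\kappa_{i}(z))^{-1}=\sigma_{i}$'', where $\sigma_i^{-1}$ is meant.
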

\begin{proof}
Let $(s_{1},s_{2})$ be a local coordinate system for $\Gamma$ such
that ${\bf t}_{1}=\frac{{\partial\Gamma(s_{1,}s_{2})}}{\partial s_{1}}$
and ${\bf t}_{2}=\frac{{\partial\Gamma(s_{1,}s_{2})}}{\partial s_{2}}$
are orthogonal unit eigenvectors of $D^{2}d$ corresponding to two
principle curvatures $\kappa_{1}$ and $\kappa_{2}$ respectively.
Notice that $(s_{1},s_{2},\eta)$ forms curvilinear coordinates on
$T_{\epsilon}$ with the coordinate transformation

\begin{equation}
z(s_{1},s_{2},\eta)=\Gamma(s_{1},s_{2})+\eta{\bf n}(s_{1},s_{2}),\label{eq:surface_eta}
\end{equation}
where ${\bf n}(s_{1,}s_{2})={\bf n}(\Gamma(s_{1,}s_{2}))$. By using
$\frac{\partial{\bf n}}{\partial s_{i}}=\kappa_{i}{\bf t}_{i}$, we
obtain $\frac{\partial z}{\partial s_{1}}=(1+\eta\kappa_{1}(s_{1,}s_{2})){\bf t}_{1}$
, $\frac{\partial z}{\partial s_{2}}=(1+\eta\kappa_{2}(s_{1,}s_{2})){\bf t_{2}}$
and $\frac{\partial z}{\partial\eta}={\bf n}$. By formula of gradient
in orthogonal curvilinear coordinate systems, we have

\[
\nabla v=(1+\eta\kappa_{1})^{-1}\frac{\partial v}{\partial s_{1}}{\bf t_{1}}+(1+\eta\kappa_{2})^{-1}\frac{\partial v}{\partial s_{1}}{\bf t_{2}}+\frac{\partial v}{\partial\eta}{\bf n}=(1+\eta\kappa_{1})^{-1}\frac{\partial v}{\partial s_{1}}{\bf t_{1}}+(1+\eta\kappa_{2})^{-1}\frac{\partial v}{\partial s_{1}}{\bf t_{2}}
\]
It follows the tangential gradient of $u$ is
\[
\nabla_{\Gamma}u(P_{\Gamma}z)=\frac{\partial v}{\partial s_{1}}{\bf t_{1}}+\frac{\partial v}{\partial s_{1}}{\bf t_{2}}=((1+d(z)\kappa_{1}(P_{\Gamma}z)){\bf t}_{1}\otimes{\bf t}_{1}+(1+d(z)\kappa_{2}(P_{\Gamma}z)){\bf t}_{2}\otimes{\bf t}_{2})\nabla v(z)
\]
By using the fact $(1+d(z)\kappa_{i}(P_{\Gamma}z))=(1-d(z)\kappa_{i}(z))^{-1}=\sigma_{i}$
and ${\bf n}\cdot\nabla v=0$, we prove \eqref{eq:grad relation}. 

First notice that ${\bf n}\cdot\overline{F}=0$ since $\overline{F}$
is the normal extension of $F.$ Let $F^{1}$ and $F^{2}$ denote
the components of $F$ in ${\bf t_{1},}{\bf t_{2}}$ respectively.
That is, $F(x)=F^{1}(x){\bf t_{1}}+F^{2}(x){\bf t_{2}}$ for $x\in\Gamma$.
By formula of divergence in orthogonal curvilinear coordinate systems,
we have
\[
\nabla\cdot\overline{F}=(1+\eta\kappa_{1})^{-1}(1+\eta\kappa_{2})^{-1}(\frac{\partial((1+\eta\kappa_{2})\overline{F^{1}})}{\partial s_{1}}+\frac{\partial((1+\eta\kappa_{1})\overline{F^{2}})}{\partial s_{2}}+\frac{\partial((1+\eta\kappa_{1})(1+\eta\kappa_{2})G_{{\bf n}})}{\partial\eta})
\]
Therefore it follows

\[
\nabla_{\Gamma}\cdot F=\frac{\partial F_{{\bf t_{1}}}}{\partial s_{1}}+\frac{\partial F_{{\bf t}_{2}}}{\partial s_{2}}=(1+\eta\kappa_{1})(1+\eta\kappa_{2})\nabla\cdot(B_{0}G).
\]
By ${\bf n}\cdot\overline{F}=0$ , we have $B_{1}(z;\nu)\overline{F}=0$.
This shows \eqref{eq:div relation} holds. 
\end{proof}
\begin{rem*}
In fact, by using ${\bf n}\cdot\overline{F}=0$ and $\frac{\partial\overline{F}_{{\bf t_{1}}}}{\partial\eta}\text{=}\frac{\partial\overline{F}_{{\bf t_{2}}}}{\partial\eta}=0$,
we can choose more general $B_{1}$ as follow 
\[
B_{1}(z;\nu):=\mathbf{\nu_{1}n}\otimes\mathbf{n}+\nu_{2}\mathbf{t_{1}}\otimes\mathbf{n}+\mathbf{\nu_{3}t_{2}}\otimes\mathbf{n}+\nu_{4}{\bf n}\otimes{\bf t_{1}}+\nu_{5}{\bf n}\otimes{\bf t_{2}},
\]
where $\nu_{i}$ are arbitrary real numbers. 
\end{rem*}
\begin{rem*}
If $\Gamma$ is a curve in $\mathbb{R}^{2},$ then it can be shown
analogously that 

\begin{equation}
\nabla_{\Gamma}u(P_{\Gamma}z)=(\sigma^{-1}{\bf t}\otimes{\bf t}+\mu{\bf n}\otimes{\bf n})\nabla v(z),
\end{equation}
and 
\end{rem*}
\begin{equation}
\nabla_{\Gamma}\cdot F=\sigma^{-1}\nabla\cdot({\bf {\bf t}\otimes{\bf t}+\nu n}\otimes{\bf n)}\,G).
\end{equation}

\subsection{Second order finite difference approximation for normal extension
of surface Laplacian}

In this paper, all numerical experiments are done by second order
finite difference schemes. We present the detail about finite difference
scheme to approximate the normal extension of surface Laplacian $\Delta_{\Gamma}u$.
For simplicity, we demonstrate 2 dimensional case and assume that
$\Delta x=\Delta y=h$, but it can be easily generalized to higher
dimensional cases with nonuniform Cartesian grids. Recall that the
normal extended equation for surface Laplacian is given by

\[
\sigma^{-1}\nabla\cdot A\nabla v=\sigma^{-1}[(A^{11}v_{x})_{x}+(A^{12}v_{y})_{x}+(A^{21}v_{x})_{y}+(A^{22}v_{y})_{y}],
\]
where $A=\sigma^{-1}{\bf t}\otimes{\bf t}+\mu{\bf n\otimes}{\bf n}=\left[\begin{array}{cc}
A^{11} & A^{12}\\
A^{21} & A^{22}
\end{array}\right]$. We use central difference to approximate all terms as following
\begin{align*}
[(A^{11}v_{x})_{x}]_{ij} & \text{=}\frac{1}{h}([A^{11}v_{x}]_{i+\frac{1}{2},j}-[A^{11}v_{x}]_{i-\frac{1}{2},j})+O(h^{2})\\
 & =\frac{1}{h^{2}}(A_{i+\frac{1}{2},j}^{11}(v_{i+1,j}-v_{i,j})-A_{i-\frac{1}{2},j}^{11}(v_{i,j}-v_{i-1,j}))+O(h^{2}),
\end{align*}
\begin{align*}
[(A^{12}v_{y})_{x}]_{ij} & =\frac{1}{2h}([A^{12}v_{y}]_{i+1,j}-[A^{12}v_{y}]_{i-1,j})+O(h^{2})\\
 & =\frac{1}{\text{4}h^{2}}(A_{i+1,j}^{12}(v_{i+1,j+1}-v_{i+1,j-1})-A_{i-1,j}^{12}(v_{i-1,j+1}-v_{i-1,j-1}))+O(h^{2}),
\end{align*}
\begin{align*}
[(A^{21}v_{x})_{y}]_{ij} & =\frac{1}{2h}([A^{21}v_{x}]_{i,j+1}-[A^{21}v_{x}]_{i,j-1})+O(h^{2})\\
 & =\frac{1}{\text{4}h^{2}}(A_{i,j+1}^{21}(v_{i+1,j+1}-v_{i-1,j+1})-A_{i,j-1}^{12}(v_{i+1,j-1}-v_{i-1,j-1}))+O(h^{2}),
\end{align*}
\begin{align*}
[(A^{22}v_{y})_{y}]_{ij} & \text{=\ensuremath{\frac{1}{h}}}([A^{22}v_{y}]_{i,j+\frac{1}{2}}-[A^{22}v_{y}]_{i,j-\frac{1}{2}})+O(h^{2})\\
 & =\frac{1}{h^{2}}(A_{i,j+\frac{1}{2}}^{22}(v_{i,j+1}-v_{i,j})-A_{i,j-\frac{1}{2}}^{22}(v_{i,j}-v_{i,j-1}))+O(h^{2}).
\end{align*}
If $v_{l,k}$ is taking value at a ghost node, replace $v_{l,k}$
by linear combination of other $v_{i,j}$ at inner nodes as discussed
in Section \ref{subsec:proposed-BC}.

\bibliographystyle{abbrv}
\bibliography{refs}

\end{document}